\tikzset{nodc/.style={circle,draw=blue!50,fill=pink!80,inner sep=4.2pt}}
\tikzset{nod1/.style={circle,draw=black,fill=black,inner sep=1pt}}
\tikzset{nod2/.style={circle,draw=black,fill=black,inner sep=2pt}}
\tikzset{nod3/.style={circle,draw=black,fill=black,inner sep=3pt}}
\tikzset{nodempty/.style={circle,draw=black,inner sep=2pt}}
\tikzset{nodde/.style={circle,draw=red!50,fill=red!80,inner sep=4.2pt}}
\tikzset{nodde1/.style={circle,draw=black!50,fill=black!80,inner sep=4.2pt}}
\tikzset{nodde2/.style={circle,draw=black!50,fill=purple!80,inner sep=4.2pt}}
\tikzset{nodde3/.style={circle,draw=black!50,fill=green!80,inner sep=4.2pt}}
\tikzset{nodde4/.style={circle,draw=black!50,fill=brown!80,inner sep=4.2pt}}
\tikzset{nodinvisible/.style={circle,draw=white,inner sep=2pt}}
\tikzset{nodpale/.style={circle,draw=gray,fill=gray,inner sep=1.6pt}}
\theoremstyle{plain}
\newtheorem{proposition}[equation]{Proposition}
\newtheorem{theorem}[equation]{Theorem}
\newtheorem{corollary}[equation]{Corollary}
\newtheorem{lemma}[equation]{Lemma}
\theoremstyle{definition}
\newtheorem{definition}[equation]{Definition}
\newtheorem{remark}[equation]{Remark}
\newtheorem{question}[equation]{Question} 
\newtheorem{fact}[equation]{Fact} 
\newcommand{\SA}{\mathbb{S}}
\newcommand{\E}{\mathcal{E}}
\newcommand{\F}{\mathcal{F}}
\newcommand{\B}{\mathcal{B}}
\newcommand{\FE}{\mathcal{F}}
\newcommand{\RE}{\mathcal{R}}
\newcommand{\SE}{\mathcal{S}}
\newcommand{\IE}{\operatorname{Ind}}
\newcommand{\csbe}{\mathcal{C}\mathcal{S}\mathcal{B}\mathcal{E}}
\newcommand{\kk}{\Bbbk}
\newcommand{\Ind}{\operatorname{Ind}}
\newcommand{\cd}{\operatorname{cochord}}
\newcommand{\reg}{\operatorname{reg}}
\newcommand{\im}{\operatorname{im}}
\newcommand{\projdim}{\operatorname{pd}}
\newcommand{\bp}{\operatorname{bp}}
\newcommand{\Iso}{\operatorname{Iso}}
\newcommand{\D}{\Delta}
\begin{document}
\title[Biclique vertex partitions and the regularity of $1$-subdivision graphs]{Chordal bipartite graphs, biclique vertex partitions and Castelnuovo-Mumford regularity of $1$-subdivision graphs}

\author{Yusuf Civan, Zakir Deniz, Oleg Duginov and Mehmet Akif Yetim}
\address{Department of Mathematics, S{\"{u}}leyman Dem{\.{\.i}}rel University,
Isparta, 32260, T{\"{u}}rk{\.{\.i}}ye.}
\email{yusufcivan@sdu.edu.tr}

\address{Department of Mathematics, D{\"{u}}zce University, D{\"{u}}zce, 81620, T{\"{u}}rk{\.{\.i}}ye.}
\email{zakirdeniz@duzce.edu.tr}

\address{Department of Informatics, Belarusian State University of Informatics and Radioelectronics, Minsk, 220005, Belarus.}
\email{oduginov@gmail.com}

\address{Department of Mathematics, S{\"{u}}leyman Dem{\.{\.i}}rel University,
Isparta, 32260, T{\"{u}}rk{\.{\.i}}ye.}
\email{akifyetim@sdu.edu.tr}

\keywords{Chordal bipartite graphs, (Castelnuovo-Mumford) regularity, biclique vertex partitioning problem, domination.}

\date{\today}

\thanks{}

\subjclass[2020]{13F55, 05E40, 05E45, 05C69.}

\begin{abstract}
A biclique in a graph $G$ is a complete bipartite subgraph (not necessarily induced), and the least positive integer $k$ for which the vertex set of $G$ can be partitioned into at most $k$ bicliques is the biclique vertex partition number $\bp(G)$ of $G$. We prove that the inequality $\reg(S(G))\geq |G|-\bp(G)$ holds for every graph $G$, where $S(G)$ is the $1$-subdivision graph of $G$ and $\reg(S(G))$ denotes the (Castelnuovo-Mumford) regularity of the graph $S(G)$. In particular, we show that the equality $\reg(S(B))=|B|-\bp(B)$ holds provided that $B$ is a chordal bipartite graph. Furthermore, for every chordal bipartite graph $B$, we prove that the independence complex of $S(B)$ is either contractible or homotopy equivalent to a sphere, and provide a polynomial time checkable criteria for when it is contractible, and describe the dimension of the sphere when it is not.  
\end{abstract} 

\maketitle

\section{Introduction}\label{sect:intro}
In recent years, topology of simplicial complexes associated to graphs plays a crucial role on either bounding or even exact determination of various graph’s invariants. Such interrelations have also proven useful for uncovering topological/algebraic properties of simplicial complexes~\citep{kozlov, villa}. Our present work has a similar spirit. 

There has been much interest in graph covering and partitioning problems. Such problems are among the most fundamental and central subjects in graph theory due to their role in many mathematical models for various real-world applications. Although the edge/vertex covering/partitioning  of a host graph by cliques has a long history,  analogous problems by means of bicliques have also attracted a significant attention (see~\citep{FMPS} and the references therein). Recall that a \emph{biclique} in a graph $G$ is a complete bipartite subgraph (not necessarily induced), and the least positive integer $k$ for which the vertex set of $G$ can be partitioned into at most $k$ bicliques is the biclique vertex partition number $\bp(G)$ of $G$ (see Section~\ref{sect:prelim} for details). The decision problem related to the biclique vertex partition number is already known to be hard, even on several subclasses of bipartite graphs~\citep{ODug}.

One of the main aims of the current work is to show that the biclique vertex partition number can be bounded in terms of an algebraic invariant associated to a derived graph from the given one. Thus our method naturally uses tools from topological combinatorics/commutative algebra, and depends on the notion of (Castelnuovo-Mumford) regularity of $1$-subdivision graphs. Recall that for a given graph $G$, its $1$-\emph{subdivision graph} $S(G)$ is the graph obtained from $G$ by replacing every edge in $G$ by a path of length two. On the other hand, the (Castelnuovo-Mumford) \emph{regularity} $\reg(H)$ of a graph $H$ is the minimum integer $h$ for which the induced subcomplexes of the independence complex of $H$ have trivial homology groups in dimension $h$ or greater.

\begin{theorem}\label{thm:main-0}
The inequality $\reg(S(G))\geq |G|-\bp(G)$ holds for every graph $G$.
\end{theorem}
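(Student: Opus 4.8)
The plan is to prove the bound by exhibiting, inside $S(G)$, an induced subgraph whose independence complex is a sphere of the correct dimension. Recall that the stated definition of regularity gives $\reg(H)\ge d+1$ as soon as some vertex subset $W\subseteq V(H)$ satisfies $\tilde H_d(\Ind(H[W]))\neq 0$; thus it suffices to produce $W\subseteq V(S(G))$ with $\tilde H_{|G|-\bp(G)-1}(\Ind(S(G)[W]))\neq 0$. Fix a biclique vertex partition of $G$ into $k=\bp(G)$ parts, say $t$ of them genuine bicliques with sides $X_i\sqcup Y_i$ ($|X_i|=p_i\ge 1$, $|Y_i|=q_i\ge 1$) and $s=k-t$ of them single vertices. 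Let $W$ consist of all vertices lying in the genuine bicliques together with the subdivision vertices $w_e$ of the crossing edges $e\in X_i\times Y_i$. Since original vertices are pairwise nonadjacent in $S(G)$, and a subdivision vertex $w_{xy}$ (with $x\in X_i$, $y\in Y_i$) is joined in $S(G)[W]$ only to $x$ and $y$, the induced subgraph $S(G)[W]$ is exactly the disjoint union $S(K_{p_1,q_1})\sqcup\cdots\sqcup S(K_{p_t,q_t})$; note that same-side edges, cross-biclique edges and the single vertices never enter $W$.

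Granting the key computation $\Ind(S(K_{p,q}))\simeq S^{\,p+q-2}$ for all $p,q\ge 1$ (treated below), the conclusion is then immediate: the independence complex of a disjoint union is the join of the complexes of the factors, and a join of spheres is a sphere, $S^{a}\ast S^{b}\simeq S^{a+b+1}$. Hence $\Ind(S(G)[W])\simeq S^{\,p_1+q_1-2}\ast\cdots\ast S^{\,p_t+q_t-2}\simeq S^{D}$ with
\[
D=\sum_{i=1}^t(p_i+q_i-2)+(t-1)=\Big(\textstyle\sum_{i=1}^t(p_i+q_i)\Big)-t-1 .
\]
As the genuine bicliques contain $|G|-s$ vertices and $s+t=\bp(G)$, this gives $D=|G|-s-t-1=|G|-\bp(G)-1$, so $\tilde H_{|G|-\bp(G)-1}(\Ind(S(G)[W]))\cong\kk\neq 0$ and $\reg(S(G))\ge|G|-\bp(G)$. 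When $G$ is edgeless we have $t=0$ and the statement degenerates to the trivial $\reg(S(G))=0$.

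It remains to compute $\Delta_{p,q}:=\Ind(S(K_{p,q}))$; label the parts $A=\{a_1,\dots,a_p\}$, $B=\{b_1,\dots,b_q\}$ with subdivision vertices $w_{ij}$ adjacent to $a_i$ and $b_j$. I will induct on $q$ using the standard decomposition $\Ind(H)=\Ind(H-x)\cup_{\lk(x)}\st(x)$ with $\lk(x)=\Ind(H-N[x])$, together with its two consequences: (a) if $\Ind(H-N[x])$ is contractible then $\Ind(H)\simeq\Ind(H-x)$; and (b) if $w$ is a leaf with neighbour $a$, then deleting $a$ isolates $w$, both pieces of the decomposition at $a$ are contractible, and $\Ind(H)\simeq\Sigma\,\Ind(H-N[a])$. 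For the base $q=1$ the graph $S(K_{p,1})$ is a spider whose legs make each $a_i$ a leaf; applying (b) at $a_1$ leaves $p-1$ disjoint edges, so $\Delta_{p,1}\simeq\Sigma\,\Ind(K_2\sqcup\cdots\sqcup K_2)\simeq\Sigma S^{p-2}=S^{p-1}$. For $q\ge 2$ I delete $b_q$: its link is $S(K_{p,q})-N[b_q]=S(K_{p,q-1})$, so $\lk(b_q)=\Delta_{p,q-1}$, while in $S(K_{p,q})-b_q$ each $w_{iq}$ becomes a leaf on $a_i$. The claim is that $\Ind(S(K_{p,q})-b_q)$ is contractible, proved by a nested induction on $p$: applying (b) to the leaf $w_{1q}$ at $a_1$ identifies $H-N[a_1]$ with $S(K_{p-1,q})-b_q$ and yields $\Ind(S(K_{p,q})-b_q)\simeq\Sigma\,\Ind(S(K_{p-1,q})-b_q)$, the base $p=1$ being $\Sigma$ of the independence complex of the isolated vertices $b_1,\dots,b_{q-1}$ (a simplex, hence contractible). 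Since the suspension of a contractible space is contractible, the claim follows. Consequently both pieces of the decomposition of $\Delta_{p,q}$ at $b_q$ are contractible and are glued along $\Delta_{p,q-1}$, whence $\Delta_{p,q}\simeq\Sigma\,\Delta_{p,q-1}$; iterating gives $\Delta_{p,q}\simeq\Sigma^{\,q-1}S^{p-1}=S^{\,p+q-2}$.

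The join/sphere bookkeeping and the passage to an induced subgraph are routine. The technical heart, and the step I expect to require the most care, is the contractibility of $\Ind(S(K_{p,q})-b_q)$: one must set up the nested induction on $p$ so that each pendant deletion via (b) reproduces a strictly smaller instance of the same graph, and one must keep the degenerate single-vertex bicliques consistently excluded from $W$ so that the dimension count lands exactly on $|G|-\bp(G)-1$ rather than being destroyed by an isolated cone vertex.
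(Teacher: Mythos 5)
Your proof is correct and takes essentially the same route as the paper: both fix an optimal biclique vertex partition, pass to the induced subgraph of $S(G)$ spanned by the partition's vertices together with the subdivision vertices of the crossing edges (a disjoint union of copies of $S(K_{p,q})$), and rest on the key computation $\Ind(S(K_{p,q}))\simeq \SA^{p+q-2}$, which the paper proves as Proposition~\ref{prop:subd-comp} by an induction that, like yours, suspends off a side vertex after showing the deletion subcomplex is contractible. The remaining differences are cosmetic: the paper converts the sphere computation into $\reg(S(K_{n,m}))=n+m-1$ (Lemma~\ref{lemma:compbipreg}) and sums regularities over the disjoint components, whereas you form the join of spheres and invoke the definition of regularity once, with your explicit exclusion of single-vertex bicliques handling a degenerate case the paper passes over silently.
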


Theorem~\ref{thm:main-0} enables us to construct a lower bound to the biclique vertex partition number in terms of a particular domination parameter of the underlying graph. Recall that a subset $A$ of vertices is said to \emph{dominate} a set $B\subseteq V$ if $B\subseteq \bigcup_{a\in A}N_G[a]$. The minimum size of a set of vertices dominating a set $B$ is denoted by $\gamma(B,G)$. In particular, $\gamma(G):=\gamma(V,G)$ is the \emph{domination number} of $G$. On the other hand, the \emph{independence domination number}
$\gamma^i(G)$ is defined to be the maximum of $\gamma(I,G)$ over all independent sets $I$ in $G$~\citep{aharoni}.

\begin{corollary}\label{cor:main}
The inequalities $\gamma^i(G)\leq \bp(G)\leq \gamma(G)$ hold for every graph $G$.
\end{corollary}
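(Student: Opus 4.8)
The plan is to establish the two inequalities by elementary combinatorial arguments that translate directly between biclique partitions and dominating sets, in opposite directions.

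For the upper bound $\bp(G)\le\gamma(G)$ I would start from a minimum dominating set $D$ with $|D|=\gamma(G)$. Assign to every vertex $v\notin D$ a single neighbour $d(v)\in D\cap N_G(v)$, which exists by domination, and set $B_d=\{d\}\cup\{v\notin D: d(v)=d\}$ for each $d\in D$. Since $d$ is adjacent to every vertex assigned to it, each $B_d$ is a star, i.e.\ a complete bipartite subgraph with parts $\{d\}$ and $B_d\setminus\{d\}$ (a single vertex when $d$ receives no assignment). As the vertices of $D$ serve as centres and every remaining vertex is assigned to exactly one centre, the family $\{B_d:d\in D\}$ is a biclique vertex partition of size $\gamma(G)$, whence $\bp(G)\le\gamma(G)$.

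For the lower bound $\gamma^i(G)\le\bp(G)$ it suffices to prove $\gamma(I,G)\le\bp(G)$ for every independent set $I$. Fix such an $I$ and a minimum biclique vertex partition $V=B_1\sqcup\cdots\sqcup B_k$ with $k=\bp(G)$, where each $B_j$ is a complete bipartite subgraph with parts $X_j$ and $Y_j$. The key observation is that $I$ meets each biclique in at most one of its sides: if $I$ contained a vertex of $X_j$ and a vertex of $Y_j$, these would be adjacent in $G$ (the biclique is a subgraph of $G$), contradicting independence. Hence for each $j$ with $I\cap B_j\neq\emptyset$ we may assume $I\cap B_j\subseteq X_j$, and then any vertex $d_j\in Y_j$ satisfies $I\cap B_j\subseteq X_j\subseteq N_G[d_j]$; when $B_j$ is a single vertex we take $d_j$ to be that vertex itself. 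The resulting set $\{d_j\}$ has at most $k$ elements and dominates $\bigcup_j(I\cap B_j)=I$, so $\gamma(I,G)\le k=\bp(G)$, and maximising over independent $I$ gives $\gamma^i(G)\le\bp(G)$.

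The only genuine subtlety — and the step I would flag as the main obstacle — is the degenerate-biclique convention on which the lower bound rests: every biclique of size at least two must have both parts non-empty (so that a single dominating vertex can be chosen from the opposite side), while single vertices must still count as bicliques for $\bp$ to be finite, and multi-vertex independent sets must \emph{not} be admissible (otherwise, already for an edgeless graph, $\bp=1$ while $\gamma^i=|G|$, so the statement would fail). I would also record the regularity-flavoured derivation that the preceding discussion hints at: rewriting Theorem~\ref{thm:main-0} as $\bp(G)\ge|G|-\reg(S(G))$, the bound $\gamma^i(G)\le\bp(G)$ follows once one knows $\reg(S(G))\le|G|-\gamma^i(G)$. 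Establishing (or citing) that upper bound on the regularity of the subdivision graph is the work along that route, whereas the direct argument above sidesteps it entirely.
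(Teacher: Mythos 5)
Your proof is correct, but it takes a genuinely different route from the paper for the inequality that matters. The paper treats $\bp(G)\leq \gamma(G)$ exactly as you do (stars centered at a minimum dominating set; this is the part it dismisses as well known), but it obtains $\gamma^i(G)\leq \bp(G)$ only indirectly, by sandwiching the regularity of the subdivision graph: Theorem~\ref{thm:main-0} gives $|G|-\bp(G)\leq \reg(S(G))$, and Theorem~\ref{thm:sg-tau} gives $\reg(S(G))\leq |G|-\gamma^i(G)$, whence the corollary. Your argument short-circuits all of this: given an independent set $I$ and a biclique partition $B_1,\ldots,B_k$, independence forces $I\cap V(B_j)$ into one side of $B_j$, and any single vertex of the opposite side dominates that whole side, so $k$ vertices dominate $I$ and $\gamma(I,G)\leq \bp(G)$. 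This is elementary, self-contained, and avoids every topological ingredient; what it does \emph{not} give is the intermediate statement about $\reg(S(G))$, which is the paper's real object of interest --- the corollary there is a byproduct advertising that the two regularity bounds are compatible, not the goal in itself. Your closing remark about conventions is also on target and consistent with the paper: a biclique is a (not necessarily induced) copy of $K_{p,q}$ with $p,q\geq 1$ (single vertices being admitted as degenerate bicliques so that $\bp$ is finite), and edgeless sets of size at least two are not bicliques --- precisely the convention under which your choice of a dominator $d_j$ in the nonempty opposite side is legitimate, and without which the inequality $\gamma^i(G)\leq\bp(G)$ would indeed fail already for edgeless graphs.
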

We remark that the last inequality of Corollary~\ref{cor:main} is rather trivial and it is well-known~\citep{ODug}; hence, the interesting part lies in the first. The proof provided here uses the regularity of $1$-subdivision graphs as an intermediate invariant (see Theorem~\ref{thm:sg-tau}).\medskip 

We next  prove that the inequality of Theorem~\ref{thm:main-0} turns into an equality on the family of chordal bipartite graphs.
\begin{theorem}\label{thm:main-1}
$\reg(S(B))=|B|-\bp(B)$ holds for every chordal bipartite graph $B$.
\end{theorem}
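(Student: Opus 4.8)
The inequality $\reg(S(B))\ge |B|-\bp(B)$ is immediate from Theorem~\ref{thm:main-0}, so the entire content is the reverse bound $\reg(S(B))\le |B|-\bp(B)$. The plan is to obtain this from the companion upper bound $\reg(S(G))\le |G|-\gamma^i(G)$ supplied by Theorem~\ref{thm:sg-tau}: combining the two yields the sandwich $|B|-\bp(B)\le \reg(S(B))\le |B|-\gamma^i(B)$, so the theorem becomes equivalent to the purely graph-theoretic identity $\bp(B)=\gamma^i(B)$ for chordal bipartite $B$. Since $\gamma^i(G)\le \bp(G)$ holds for every graph by Corollary~\ref{cor:main}, everything reduces to proving $\bp(B)\le\gamma^i(B)$; equivalently, given an optimal biclique vertex partition of $B$ into $\bp(B)$ parts, I must exhibit an independent set $I$ that cannot be dominated by fewer than $\bp(B)$ vertices, so that $\gamma(I,B)\ge \bp(B)$.

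The structural tool is the \emph{bisimplicial edge}: a chordal bipartite graph with at least one edge has an edge $e=\{a,b\}$ for which $N_B(a)\cup N_B(b)$ induces a biclique, and the class is closed under vertex deletion, giving a simple/bisimplicial elimination order to induct along. I would run an induction on $|B|$ in which a single biclique $W$ attached to such an edge is peeled off: $B-W$ is again chordal bipartite with $\bp(B-W)=\bp(B)-1$, and the inductive hypothesis furnishes an independent set $I'$ in $B-W$ with $\gamma(I',B-W)\ge \bp(B)-1$. The step then selects one vertex of $W$ to adjoin to $I'$ so that the enlarged set stays independent and so that no single vertex of $B$ can simultaneously dominate the new vertex and a vertex of $I'$; the nested-neighbourhood property of the bisimplicial edge is precisely what guarantees such a choice, forcing $\gamma(I,B)\ge \gamma(I',B-W)+1\ge \bp(B)$.

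The main obstacle is exactly this bookkeeping: that $\bp$ drops by precisely one when the correct biclique is removed, and that the witnessing independent set can be enlarged without its domination number stalling once the removed vertices are reinstated. Both are false for general graphs — for $K_3$ and for $C_5$ one has $\gamma^i=1$ but $\bp=2$ — so the identity $\bp=\gamma^i$, and with it the whole reduction, must genuinely use that chordal bipartite graphs are totally balanced. I expect the cleanest formulation of the inductive step to route through this total balancedness, recasting it as a König-type min–max duality between the covering quantity $\bp$ and the packing quantity $\gamma^i$, rather than through ad hoc vertex selections, since total balancedness is what forces the two optima to agree.

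Finally, running the same bisimplicial induction at the level of the independence complex yields the advertised topological refinement. Taking the midpoint $m_e$ of the bisimplicial edge as the pivot vertex in the deletion–link (Mayer–Vietoris) sequence $\cdots\to\tilde H_\ast(\Ind(S(B)-N[m_e]))\to\tilde H_\ast(\Ind(S(B)-m_e))\to\tilde H_\ast(\Ind(S(B)))\to\cdots$, the nested-neighbourhood structure lets one fold dominated vertices so that at each stage one of the two smaller complexes is contractible. Consequently $\Ind(S(B))$ is inductively either contractible or homotopy equivalent to a single sphere $\SA^{\,d}$, and keeping track of the suspensions introduced by the folds pins the dimension at $d=|B|-\bp(B)-1$; this is consistent with, and re-proves through the full complex, the regularity equality established above.
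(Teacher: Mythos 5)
Your reduction fails at its central step: the identity $\bp(B)=\gamma^i(B)$ is \emph{false} for chordal bipartite graphs. The paper itself exhibits a counterexample immediately after Corollary~\ref{cor:main}: the chordal bipartite (in fact planar convex bipartite) graph $B$ of Figure~\ref{fig:indom-bp} satisfies $\gamma^i(B)=2<3=\bp(B)$. Consequently the sandwich $|B|-\bp(B)\leq \reg(S(B))\leq |B|-\gamma^i(B)$ obtained from Theorems~\ref{thm:main-0} and~\ref{thm:sg-tau} does not collapse to an equality; for that example it leaves a gap of one, and Theorem~\ref{thm:main-1} asserts that the regularity sits at the \emph{lower} end of the sandwich. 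So no appeal to total balancedness or to a K\"{o}nig-type min--max duality between $\bp$ and $\gamma^i$ can rescue the plan: $\gamma^i$ is genuinely too small, and any proof of $\reg(S(B))\leq |B|-\bp(B)$ must engage $\bp$ itself. This is what the paper does. It inducts on $|B|$ by choosing $x,y\in V(B)$ with $N_B(x)\subseteq N_B(y)$ (available from a simple vertex, Proposition~\ref{prop:simple}), applies the dichotomy of Corollary~\ref{cor:induction-sc} together with Lemma~\ref{lem:deg-one} to $S(B)$ at the vertex $y$ --- yielding either $\reg(S(B))=\reg(S(B-y))+1$ or $\reg(S(B))=\reg(S(B-(L\cup\{y\})))+|L|$ for some $L\subseteq N_B(y)$ --- and closes the induction with two pieces of $\bp$-bookkeeping: $\bp(B)\leq \bp(B-y)$ (Lemma~\ref{lem:bp-induct}, which uses exactly the dominated pair $x,y$), and $\bp(B)\leq \bp(B-(L\cup\{y\}))+1$ (adjoin the star on $\{y\}\cup L$ to a partition of the smaller graph). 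Note that Theorem~\ref{thm:sg-tau} plays no role in the paper's proof of Theorem~\ref{thm:main-1}; it is used only to deduce Corollary~\ref{cor:main}.

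A secondary error: in your final paragraph the dimension of the sphere cannot be pinned at $|B|-\bp(B)-1$. By Lemma~\ref{lem:dim} it equals $|B|-|\E(B)|-1$, where $|\E(B)|$ is the length of a complete simple biclique elimination sequence, and the paper's remark on the graphs $B_p$ shows that $|\E(B_p)|-\bp(B_p)=p-2$ can be arbitrarily large. For such graphs the regularity $|B|-\bp(B)$ is witnessed by a proper induced subcomplex of $\Ind(S(B))$, not by the top homology of the whole complex, so the topological statement and the regularity equality cannot be fused in the way your last paragraph suggests.
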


We suspect that the class of chordal bipartite graphs is not the unique bipartite class over which the equality of Theorem~\ref{thm:main-1} holds. We discuss a possible superclass in Section~\ref{sect:concl} in which such subfamilies may be further identified.

In general, topological methods determining the homotopy types of independence complexes of graphs are limited. Their efficiencies generally depend on the existence of special structures over the underlying graphs. Such a calculation would be rather accessible if the graph lies in a hereditary class carrying some edge/vertex elimination schemes (see~\cite{kozlov}). For instance, the determination of the homotopy types of the independence complexes of chordal bipartite graphs themselves benefits from the existence of such a scheme~\citep{yetim}. Even though performing a subdivision on a graph may destroy such a structure, we next prove that there is still a chance that its existence may help us to describe the homotopy type after $1$-subdivision on chordal bipartite graphs.     

\begin{theorem}\label{thm:main-2}
For every chordal bipartite graph $B$, the independence complex of $S(B)$ is either contractible or homotopy equivalent to a sphere. In particular, there exists a polynomial time algorithm to decide when $\Ind(S(B))$ is contractible and determine the dimension of the sphere otherwise.
\end{theorem}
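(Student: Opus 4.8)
The plan is to compute the homotopy type of $\Ind(S(B))$ by induction on $|B|$, driving the induction with the star/link cofibration
\[
\Ind(S(B)-N[v])\;\longrightarrow\;\Ind(S(B)-v)\;\longrightarrow\;\Ind(S(B)),
\]
which exhibits $\Ind(S(B))$ as the homotopy cofibre (mapping cone) of the inclusion $\iota_v\colon\Ind(S(B)-N[v])\hookrightarrow\Ind(S(B)-v)$ for any vertex $v$. The reason to subdivide is the clean identity $S(B)-N[v]=S(B-v)$, valid for every original vertex $v$: deleting $v$ together with the midpoints $m_{vw}$ on the edges at $v$ is exactly the $1$-subdivision of the induced subgraph $B-v$, which is again chordal bipartite. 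Hence the link term is always $\Ind(S(B-v))$, on which the inductive hypothesis applies, and the whole difficulty is funnelled into understanding the middle term $\Ind(S(B)-v)$.

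First I would dispose of an isolated vertex of $B$: it is isolated in $S(B)$, so $\Ind(S(B))$ is a cone and hence contractible. Assuming $\delta(B)\ge 1$, the engine of the induction is a \emph{same-side neighbourhood-nested pair}, i.e.\ distinct vertices $u,v$ on one side with $\emptyset\ne N_B(u)\subseteq N_B(v)$; this is precisely what the vertex elimination scheme for chordal bipartite graphs provides (the same scheme underlying the homotopy analysis in~\citep{yetim}). Given such a pair I run the cofibration at the \emph{larger} vertex $v$. In $S(B)-v$ each midpoint $m_{vw}$ with $w\in N_B(u)$ has become a leaf at $w$, so $N(m_{vw})=\{w\}\subseteq\{u,w\}=N(m_{uw})$; the fold lemma for independence complexes then lets me delete every $m_{uw}$ with $w\in N_B(u)$ without changing the homotopy type of $\Ind(S(B)-v)$. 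Since $N_B(u)\subseteq N_B(v)$, these are \emph{all} of the midpoints at $u$, so after the folds $u$ is isolated and $\Ind(S(B)-v)$ is contractible. Consequently the mapping cone of $\iota_v$ collapses to the suspension of its source, giving the clean recursion
\[
\Ind(S(B))\;\simeq\;\Sigma\,\Ind(S(B-v)).
\]
It is this step—always coning off a \emph{contractible} middle term—that forces a single suspension rather than a nontrivial wedge, and thereby propagates the dichotomy ``contractible or a single sphere''.

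The induction bottoms out when $\delta(B)\ge1$ and no same-side nested pair exists; the structural input is that such a chordal bipartite graph must be a disjoint union of $k=|E(B)|\ge 1$ single edges, whence $S(B)$ is a disjoint union of $k$ paths $P_3$ and $\Ind(S(B))=\Ind(P_3)^{\ast k}=(S^0)^{\ast k}\simeq S^{k-1}$. As suspension sends a point to a point and $S^n$ to $S^{n+1}$, the recursion proves that $\Ind(S(B))$ is contractible or a sphere; it is a sphere exactly when the elimination reaches a matching base without ever creating an isolated vertex. (As a check, for $B=C_4$ one peels $C_4\to P_3\to K_2$, obtaining $\Ind(S(C_4))=\Ind(C_8)\simeq\Sigma^2 S^0=S^2$.) I expect the main obstacle to be precisely this combinatorial dichotomy for chordal bipartite graphs—producing the nested pair at each stage, or certifying the matching base—since it is here that chordal bipartiteness (equivalently, the $\Gamma$-free/totally balanced structure of the biadjacency matrix) is used in an essential way; the topological reductions above are then formal.

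Finally, the dimension and the algorithm follow from the previous results together with the recursion. In the spherical case the reduced homology of $\Ind(S(B))$ is concentrated in a single top degree $d$, and as the top sphere class realises the regularity we have $\reg(S(B))=d+1$, so Theorem~\ref{thm:main-1} gives $d=\reg(S(B))-1=|B|-\bp(B)-1$. For the algorithm, each reduction step either exhibits an isolated vertex, or a same-side nested pair, or certifies a matching, all in polynomial time; the recursion has depth at most $|B|$, and one need only carry a contractibility flag together with the number of suspensions taken. This decides contractibility in polynomial time and, in the spherical case, returns the dimension $|B|-\bp(B)-1$.
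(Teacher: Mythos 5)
Your topological engine is correct and is essentially the same mechanism as the paper's proof of Theorem~\ref{thm:main-2-a}: both arguments show that the deletion complex $\Ind(S(B)-v)$ is contractible by folding away subdivision vertices via Corollary~\ref{cor:hom-induction} until an original vertex becomes isolated, then invoke Theorem~\ref{thm:hom-simp-induction} to get $\Ind(S(B))\simeq \Sigma\,\Ind(S(B-v))$. The paper batches these deletions around a bisimplicial edge $e_1=ss_1$ with $s$ simple (the vertices $s_k,\dots,s_2$ it removes are exactly the larger members of the nested pairs $(s_1,s_j)$), whereas you remove one nested vertex at a time and terminate at a perfect matching; the bookkeeping differs, the mechanism does not. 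Your structural trichotomy (isolated vertex, or same-side nested pair, or disjoint union of edges) is asserted rather than proved, but it follows in a few lines from Proposition~\ref{prop:simple}: a simple vertex of degree at least two has two neighbours with nested neighbourhoods, and if a simple vertex $s$ has degree one with neighbour $t$, then either $\deg(t)=1$ (a $K_2$-component splits off) or any second neighbour $s'$ of $t$ satisfies $N_B(s)=\{t\}\subseteq N_B(s')$. That gap is minor and fillable.

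The genuine error is the dimension formula. You claim that in the spherical case $\reg(S(B))=d+1$, hence $d=|B|-\bp(B)-1$ by Theorem~\ref{thm:main-1}. This is false: regularity is a maximum over \emph{all induced subcomplexes}, and it need not be attained by the homology of the full complex. The paper's own remark on the graphs $B_p$ (see Figure~\ref{fig:b_4}) refutes your formula: let $B_p$ be the chordal bipartite graph formed by $p\geq 3$ four-cycles sharing a common edge $ab$. Then $\bp(B_p)=2$ (two stars, centred at $a$ and at $b$), while your own recursion gives $\Ind(S(B_p))\simeq\Sigma^2\,\Ind(S(pK_2))\simeq\SA^{\,p+1}$ (delete $a$, then $b$, reaching the matching $\{c_id_i\}$), so $d=p+1$, whereas $|B_p|-\bp(B_p)-1=2p-1$. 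Here $\reg(S(B_p))=2p$ is realised by a proper induced subcomplex (an induced matching of size $2p$ in $S(B_p)$), not by the sphere itself. The correct statement, which the paper proves in Lemma~\ref{lem:dim} and Proposition~\ref{prop:sbe}, is $d=|B|-k-1$ where $k$ is the length of a complete simple biclique elimination sequence --- in your terms, the number of suspension steps plus the size of the terminal matching, minus one; the identity $k=\bp(B)$ holds for $S_{2,2,2}$-free bipartite graphs (Corollary~\ref{lem:s222}) but fails in general. Note that your algorithm as you describe it (carrying the suspension count) does compute the right dimension, and its correctness is even more immediate than for the paper's greedy procedure, since every run of your recursion computes the same homotopy type; it is only the closed-form identification with $|B|-\bp(B)-1$ that must be discarded.
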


\section{Preliminaries}\label{sect:prelim}
We first recall some general notions and notations needed throughout the
paper, and repeat some of the definitions mentioned in the introduction more
formally.

\subsection{Graphs}
All graphs we consider are finite and simple, i.e., do not have any loops or multiple edges. By writing $V(G)$ and $E(G)$, we mean the vertex set and the edge set of $G$, respectively. An edge between $u$ and $v$ is denoted by $e= uv$ or $e=\{u,v\}$ interchangeably. If $U\subset V$, the graph induced on $U$ is written $G[U]$, and in particular, we abbreviate $G[V\backslash U]$ to $G-U$, and write $G-x$ whenever $U=\{x\}$.
The \emph{complement} of $G$, denoted by $\overline{G}$, is the graph with the same vertex set $V$ and such that $uv\in E(\overline{G})$ if and only if $uv\notin E(G)$. The open and closed neighborhoods of a vertex $v$ are $N_G(v)=\{u\in V(G): \; uv\in E(G) \}$ and $N_G[v]=N_G(v)\cup \{v \}$, respectively. The size of the set $N_G(v)$ is called the \emph{degree} of $v$ in $G$ and denoted by $\deg_G(v)$, and the maximum and the minimum degrees of a graph G are denoted by $\D(G)$ and $\delta(G)$ respectively.
The closed neighborhood of an edge $e=uv$ is defined to be the set $N_G[e]:=N_G[u]\cup N_G[v]$. 

Throughout the paper, $K_n$, $P_n$ and $C_k$ will denote the complete, path and cycle graphs on $n\geq 1$ and $k\geq 3$ vertices, respectively. Moreover, we denote by $K_{p,q}$, the complete bipartite graph for any $p, q\geq 1$. In particular, the complete bipartite graph $K_{1,q}$ is called a \emph{star}, and the graph $K_{1,3}$ is known as the \emph{claw}. 

If $\FE$ is a family of graphs, we say that a graph $G$ is $\FE$-free if no induced subgraph of $G$ is isomorphic to a graph from the family $\FE$. A graph $G$ is called \emph{chordal} if it does not contain an induced cycle of length greater than three. Moreover, a graph $G$ is said to be co-chordal if its complement $\overline{G}$ is a chordal graph. A collection $\mathcal{G}=\{H_1, H_2,\ldots, H_r\}$ of subgraphs of $G$ is called a \emph{co-chordal cover} of $G$ (of length $r$), if each edge of $G$ belongs to at least one of the subgraphs in $\mathcal{G}$. The \emph{co-chordal cover number} of $G$, denoted by $\cd(G)$, is the minimum number $k$ such that $G$ admits a co-chordal cover of length $k$. 
 
An \emph{independent set} in a graph is a set of pairwise non-adjacent vertices, while a \emph{clique} means a set of pairwise adjacent vertices. A graph is \emph{bipartite} if its vertex set can be partitioned into two independent sets. A graph is said to be \emph{chordal bipartite} if it is bipartite and does not contain an induced cycle of length greater than four. 
A \emph{biclique} in a graph is a (not necessarily induced) subgraph isomorphic to a complete bipartite graph. A set $\RE=\{R_1,R_2,\ldots,R_k\}$ of bicliques of a graph $G$ is a \emph{biclique vertex cover} of $G$, if each vertex of $G$ belongs to at least one biclique in $\RE$. A biclique vertex cover $\RE=\{R_1,R_2,\ldots,R_k\}$ is said to be a \emph{biclique vertex partition} if bicliques in $\RE$ are pairwise disjoint, that is, each vertex of $G$ belongs to exactly one biclique in $\RE$. The \emph{biclique vertex partition number} $\bp(G)$ of a graph $G$ is defined to be the least integer $k$ such that $G$ admits a biclique vertex partition of size $k$. 

\begin{remark} It is known from \citep{FMPS} that a graph has a biclique vertex cover of size at most $k$ if and only if it has a biclique vertex partition of size $k$. Therefore we do not distinguish between the biclique vertex cover and the biclique vertex partition of a graph $G$ and use the notation $\bp(G)$. 
\end{remark}

A \emph{matching} of a graph $G$ is a subset of edges, no two of which share a common vertex. When $M$ is a matching in a graph $G$, we denote by
$V(M)$, the set of vertices incident to edges in $M$.
An induced matching is a matching $M$ such that no two vertices belonging to different edges of $M$ are adjacent. The maximum size $\im(G)$ of an induced matching of $G$ is known as the \emph{induced matching number} of $G$.

We write bipartite graphs in the form $B=(X,Y;E)$ in which the vertex set $V(B)$ has the corresponding bipartition $X\cup Y$. 
A vertex $x$ in $B$ is said to be a \emph{simple vertex} if for any $u,v\in N_B(x)$, either $N_B(u)\subseteq N_B(v)$ or $N_B(v)\subseteq N_B(u)$ holds~\citep[pp. $77$]{BLS-gclass}. This clearly defines a linear order among neighborhoods of neighbors of a simple vertex. An edge $e$ of $B$ is called a \emph{bisimplicial edge} if $N_B[e]$ induces a biclique in $B$.

\begin{proposition}\textnormal{\cite{HMP}}\label{prop:simple}
Every chordal bipartite graph has at least one simple vertex in each partite set. 
\end{proposition}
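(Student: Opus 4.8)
The plan is to pass from the graph to its biadjacency matrix and to exploit a normal form for that matrix. Writing $B=(X,Y;E)$ with $X=\{x_1,\dots,x_m\}$ and $Y=\{y_1,\dots,y_n\}$, let $M=(M_{ij})$ be the $m\times n$ zero-one matrix with $M_{ij}=1$ if and only if $x_iy_j\in E$. The key observation is that the condition defining a simple vertex is purely a statement about nesting of the columns (resp.\ rows) of $M$ that meet a fixed row (resp.\ column): the vertex $x_i$ is simple precisely when, among all columns $y_j$ with $M_{ij}=1$, the supports $N_B(y_j)=\{x_k:M_{kj}=1\}$ form a chain under inclusion, and symmetrically for $y_j\in Y$. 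So it suffices to produce one row whose incident columns are nested and one column whose incident rows are nested.

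To get such a row and column simultaneously, I would invoke the classical fact that the biadjacency matrix of a chordal bipartite graph is totally balanced, and that a totally balanced matrix admits a doubly lexical ordering of its rows and columns that is $\Gamma$-free, i.e.\ contains no submatrix $\left(\begin{smallmatrix}1&1\\1&0\end{smallmatrix}\right)$ (equivalently: there are no indices $i<i'$ and $j<j'$ with $M_{ij}=M_{ij'}=M_{i'j}=1$ and $M_{i'j'}=0$). This is the one place where external machinery (Lubiw; Anstee--Farber; Hoffman--Kolen--Sakarovitch) is borrowed; everything afterwards is elementary.

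Assume $M$ is written in such a $\Gamma$-free order. I claim the top row $x_1$ is simple. Indeed, take two neighbors $y_j,y_{j'}$ of $x_1$ with $j<j'$, so $M_{1j}=M_{1j'}=1$. For any row $i$ with $M_{ij}=1$, the four entries in rows $1,i$ and columns $j,j'$ read $(1,1;1,M_{ij'})$, and $\Gamma$-freeness forbids $M_{ij'}=0$, whence $M_{ij'}=1$. Thus $N_B(y_j)\subseteq N_B(y_{j'})$, and since $y_j,y_{j'}$ were arbitrary neighbors of $x_1$ their neighborhoods form a chain, so $x_1$ is simple in $X$. Because the pattern $\left(\begin{smallmatrix}1&1\\1&0\end{smallmatrix}\right)$ is symmetric under transposition, a $\Gamma$ in $M^{\mathsf T}$ corresponds to a $\Gamma$ in $M$ at the swapped index pairs, so the very same argument applied to columns shows that the leftmost column $y_1$ is simple in $Y$. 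This yields a simple vertex in each partite set. (Isolated vertices have empty neighborhood and are simple vacuously, so any partite set containing one is handled trivially.)

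The main obstacle is concentrated in the second step: justifying that chordal bipartiteness delivers the $\Gamma$-free normal form. If one instead wants a self-contained argument avoiding doubly lexical orderings, the natural alternative is to fix an extremal vertex, say $x^*\in X$ with $N_B(x^*)$ inclusion-minimal, assume it fails to be simple, extract witnesses $a\in N_B(u)\setminus N_B(v)$ and $b\in N_B(v)\setminus N_B(u)$ for an incomparable pair $u,v\in N_B(x^*)$, and manufacture an induced cycle of length at least six through $x^*,u,v,a,b$ and a suitable common neighbor. The delicate part there is producing that common neighbor and verifying the absence of all potential chords, which is exactly what the minimality of $N_B(x^*)$ together with repeated use of the $C_{\ge 6}$-free hypothesis must guarantee; this is why I would prefer the matrix route, where the combinatorial difficulty is packaged into the cited normal form.
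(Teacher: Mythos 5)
Your proposal cannot be measured against a proof in the paper, because the paper contains none: Proposition~\ref{prop:simple} is imported as a black box from the cited reference, and the authors only use it to deduce Fact~\ref{fact:bisimplicial}. So the real comparison is with the literature, and there your argument is in fact the standard matrix-theoretic proof. The elementary part of your argument is airtight: in a $\Gamma$-free ordering, for neighbors $y_j,y_{j'}$ of $x_1$ with $j<j'$ and any $x_i\in N_B(y_j)$ with $i>1$, applying $\Gamma$-freeness to rows $1,i$ and columns $j,j'$ forces $M_{ij'}=1$, so the neighborhoods of the neighbors of $x_1$ are nested in column order; and since the forbidden pattern $\left(\begin{smallmatrix}1&1\\1&0\end{smallmatrix}\right)$ equals its own transpose, the identical argument applied to $M^{\mathsf T}$ makes the first column simple as well. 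This route buys more than the proposition asserts: the $\Gamma$-free normal form produces a simple vertex in each part \emph{simultaneously}, and since deleting the first row or column preserves $\Gamma$-freeness, iteration yields a full simple elimination scheme, which is essentially the content of the results of Lubiw, Anstee--Farber and Hoffman--Kolen--Sakarovitch that you invoke. Two caveats. First, all of the difficulty is concentrated in the two borrowed theorems (chordal bipartite if and only if totally balanced, and totally balanced implies $\Gamma$-free orderable), each of which is substantially harder than the proposition itself, so as a matter of logical economy this is a heavy hammer; that is acceptable as long as the citations are made explicit, as you do. Second, your sketched ``self-contained'' alternative via a vertex with inclusion-minimal neighborhood is not a proof: the construction of the common neighbor and the verification that the resulting cycle is chordless are exactly the hard steps, and you leave them unexecuted, so only the matrix argument should be counted.
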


The following fact is immediate from Proposition~\ref{prop:simple}.

\begin{fact}\label{fact:bisimplicial}
Every chordal bipartite graph having at least one edge has a bisimplicial edge. 
\end{fact}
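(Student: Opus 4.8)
The plan is to derive this fact directly from Proposition~\ref{prop:simple}, which furnishes a simple vertex, by exhibiting one edge incident to such a vertex that is bisimplicial. First I would dispose of a harmless technical point: since $B$ has at least one edge, I may delete its isolated vertices (or equivalently pass to a connected component of $B$ carrying an edge). This changes neither the edge set nor the neighborhood of any vertex of positive degree, and an induced subgraph of a chordal bipartite graph is again chordal bipartite, so Proposition~\ref{prop:simple} still applies and now returns a simple vertex of positive degree. Because all neighbors of such a vertex lie in its own component, its neighborhoods computed in the component and in $B$ coincide, so working inside $B$ loses nothing.

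So let $x$ be a simple vertex of $B$ with $\deg_B(x)\geq 1$, say $x\in X$. By definition of a simple vertex the neighborhoods $\{N_B(z):z\in N_B(x)\}$ form a chain under inclusion, so there is a neighbor $y\in N_B(x)$ whose neighborhood is smallest, i.e.\ $N_B(y)\subseteq N_B(w)$ for every $w\in N_B(x)$. I claim the edge $e=xy$ is bisimplicial. Since $B$ is bipartite with $x\in X$ and $y\in Y$, we have $N_B(x)\subseteq Y$ and $N_B(y)\subseteq X$; moreover $x\in N_B(y)$ and $y\in N_B(x)$, so the closed neighborhood splits cleanly as $N_B[e]=N_B[x]\cup N_B[y]=N_B(y)\cup N_B(x)$, a disjoint union whose two parts lie in $X$ and $Y$ respectively.

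It then remains only to check that $N_B[e]$ induces a complete bipartite graph, that is, that every $u\in N_B(y)$ is adjacent to every $w\in N_B(x)$. This is precisely where the minimality of $N_B(y)$ does the work: for any $w\in N_B(x)$ we have $N_B(y)\subseteq N_B(w)$, hence $u\in N_B(y)\subseteq N_B(w)$, which says exactly $uw\in E(B)$. As there are no edges within $X$ or within $Y$, the subgraph induced on $N_B[e]$ is the complete bipartite graph between $N_B(y)$ and $N_B(x)$, a biclique, and therefore $e$ is bisimplicial.

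The argument is genuinely short, as the phrase ``immediate from Proposition~\ref{prop:simple}'' suggests, so there is no serious obstacle. The only places needing a little care are the choice of the neighbor $y$ realizing the \emph{smallest} neighborhood (so that the inclusion points the right way) and the bookkeeping ensuring that isolated vertices, which could otherwise cause Proposition~\ref{prop:simple} to return a degree-zero simple vertex, are harmless. The true crux is the observation that the minimum-neighborhood neighbor of a simple vertex automatically delivers the required completeness of $N_B[e]$.
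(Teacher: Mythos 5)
Your proof is correct and takes essentially the same route as the paper: it selects the simple vertex supplied by Proposition~\ref{prop:simple} and the edge to its minimum-neighborhood neighbor, then uses the chain condition $N_B(y)\subseteq N_B(w)$ to verify that $N_B[e]$ induces a biclique. The paper's version merely leaves this verification (and the harmless isolated-vertex technicality you handle explicitly) to the reader.
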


Indeed, let $v$ be a simple vertex of a chordal bipartite graph $B$ and $u_1, u_2,\ldots, u_k$ be its neighbors such that $N_B(u_i)\subseteq N_B(u_j)$ for $1\leq i<j\leq k$. Then observe that the edge $vu_1$ is a bisimplicial edge. In the sequel, we often appeal to the existence of such bisimplicial edges in chordal bipartite graphs.

\subsection{Simplicial Complexes}

An \emph{(abstract) simplicial complex} $X$ on a finite set $V$ is a family of subsets of $V$ closed under inclusion and $\{v\}\in X$ for every $v\in V$.

The elements of $V$ and $X$ are called \textit{vertices} and \textit{faces} of $X$, respectively. The dimension of a face $A \in X$ is $\dim(A):=|A|-1$. The dimension of $X$ is $\dim(X):=\max\{\dim(A)\colon A\in X\}$. The \textit{(simplicial) join} of two complexes $X_1$ and $X_2$ is defined by
\begin{displaymath}
X_1 \ast X_2:=\{A_1 \cup A_2\colon  A_i\in X_i, i=1,2\}.
\end{displaymath}

In particular, the join of a simplicial complex $X$ and the zero-dimensional sphere $\SA^0=\{ \emptyset, \{a\},\{b\} \}$ is called the \emph{suspension} of $X$, that is, $\Sigma X:= \SA^0 \ast X$. Similarly, the join of $X$ and the simplicial complex $\{\emptyset,\{a\}\}$ is called the cone of $X$ with apex $a$.

One can associate to a graph $G$, the simplicial complex $\Ind(G)$, the independence complex of $G$, whose faces correspond to independent sets of $G$.
 
\begin{definition}
Let $X$ be a simplicial complex on the vertex set $V$, and let $\Bbbk$ be any field. Then the \textit{Castelnuovo-Mumford regularity} $\reg_{\Bbbk}(X)$ of $X$ over $\Bbbk$ is defined by
\begin{displaymath}
\reg_{\Bbbk}(X):=\max \{j\colon \widetilde{H}_{j-1}(X[S]; \Bbbk) \neq 0 \; \text{for some } S\subseteq V  \} 
\end{displaymath}
where $X[S]:=\{ F\in X\colon F\subseteq S\}$ is the induced subcomplex of $X$ by $S$, and $\widetilde{H}_{\ast}(-,\Bbbk)$ denotes the reduced singular homology. 
\end{definition}

The regularity of a graph $G$ (over $\Bbbk$) is defined to be the regularity of its independence complex $\Ind(G)$. We write $\reg(G)$ instead of $\reg(\Ind(G))$, and note that our results are independent of the characteristic of the coefficient field, so we drop $\kk$ from our notation.

The following provides an inductive bound on the regularity of graphs. 

\begin{corollary}\textnormal{\cite{DHS-boundpd}}\label{cor:induction-sc}
Let $G$ be a graph and let $v\in V$ be given. Then
\begin{equation*}
\reg (G)\leq \max\{\reg(G-v), \reg(G-N_G[v])+1\}.
\end{equation*}
Moreover, $\reg(G)$ always equals to one of $\reg(G-v)$ or $\reg(G-N_G[v])+1$. 
\end{corollary}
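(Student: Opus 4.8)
The plan is to compare $X := \Ind(G)$ with the two derived complexes through the Mayer–Vietoris sequence attached to the vertex $v$. Recall the standard decomposition $X = \st_X(v) \cup \del_X(v)$, where the closed star $\st_X(v)$ is a cone with apex $v$ (hence contractible) and $\del_X(v) \cap \st_X(v) = \lk_X(v)$. The two identifications that drive everything are $\del_X(v) = \Ind(G-v)$ and $\lk_X(v) = \Ind(G - N_G[v])$; more precisely, for every $S \subseteq V$ with $v \in S$ one has $\del_{X[S]}(v) = \Ind(G-v)[S\setminus\{v\}]$ and $\lk_{X[S]}(v) = \Ind(G - N_G[v])[S \setminus N_G[v]]$, so these are genuinely induced subcomplexes of $\Ind(G-v)$ and $\Ind(G-N_G[v])$. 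Since $\st_{X[S]}(v)$ is contractible, Mayer–Vietoris yields for each such $S$ the long exact sequence
\[
\cdots \to \widetilde{H}_{j}(\del_{X[S]}(v)) \to \widetilde{H}_{j}(X[S]) \xrightarrow{\partial} \widetilde{H}_{j-1}(\lk_{X[S]}(v)) \xrightarrow{\iota_*} \widetilde{H}_{j-1}(\del_{X[S]}(v)) \to \cdots
\]
where $\iota_*$ is induced by the inclusion $\lk_{X[S]}(v) \hookrightarrow \del_{X[S]}(v)$.

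Two easy observations come first. The bound $\reg(G-v) \leq \reg(G)$ is immediate, since the induced subcomplexes of $\Ind(G-v)$ are precisely the $X[S]$ with $v\notin S$, a subfamily of the induced subcomplexes of $X$. For the displayed inequality, set $r = \reg(G)$ and pick $S$ with $\widetilde{H}_{r-1}(X[S]) \neq 0$. If $v \notin S$, then $X[S]$ already witnesses $\reg(G-v) \geq r$. If $v \in S$, exactness at the term $\widetilde{H}_{r-1}(X[S])$ prevents both adjacent groups from vanishing simultaneously, so either $\widetilde{H}_{r-1}(\del_{X[S]}(v)) \neq 0$ (giving $\reg(G-v) \geq r$) or $\widetilde{H}_{r-2}(\lk_{X[S]}(v)) \neq 0$ (giving $\reg(G-N_G[v]) \geq r-1$). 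In every case $\max\{\reg(G-v), \reg(G-N_G[v])+1\} \geq r$.

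It remains to upgrade this to the dichotomy. Assume $\reg(G) \neq \reg(G-v)$; combined with $\reg(G-v)\le\reg(G)$ this means $r := \reg(G) > \reg(G-v)$, and we must show $\reg(G-N_G[v]) = r-1$. For the lower bound, take a witness $S$ for $\reg(G)=r$; it must contain $v$ (else $\reg(G-v)\ge r$), and $\widetilde{H}_{r-1}(\del_{X[S]}(v))=0$ because $\del_{X[S]}(v)$ is an induced subcomplex of $\Ind(G-v)$ and $\reg(G-v)<r$; the sequence then forces $\widetilde{H}_{r-2}(\lk_{X[S]}(v))\neq 0$, whence $\reg(G-N_G[v]) \geq r-1$. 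For the reverse bound the choice of subcomplex is decisive: suppose $\reg(G-N_G[v]) = s \geq r$, choose $S'\subseteq V \setminus N_G[v]$ with $\widetilde{H}_{s-1}(\Ind(G[S']))\neq 0$, and set $S := S' \cup N_G[v]$, augmenting by the \emph{entire} closed neighbourhood rather than by $v$ alone. Then $\lk_{X[S]}(v) = \Ind(G[S'])$ carries the class in dimension $s-1$, while $\del_{X[S]}(v) = \Ind(G[S'\cup N_G(v)])$ is an induced subcomplex of $\Ind(G-v)$, so $\widetilde{H}_{s-1}(\del_{X[S]}(v))=0$ because $\reg(G-v) < r \leq s$. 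Exactness forces $\iota_* = 0$, hence $\partial$ surjects onto $\widetilde{H}_{s-1}(\lk_{X[S]}(v)) \neq 0$ and $\widetilde{H}_{s}(X[S]) \neq 0$, contradicting $\reg(G) = r \leq s$. Therefore $\reg(G-N_G[v]) = r - 1$ and $\reg(G) = \reg(G-N_G[v]) + 1$, which is the second alternative.

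I expect the main obstacle to be exactly this last upper bound: the naive augmentation $S = S' \cup \{v\}$ leaves $v$ isolated and makes $X[S]$ a contractible cone, destroying all information, so the essential idea is to add back all of $N_G(v)$ so that the deletion lands inside $\Ind(G-v)$ (where the relevant homology is already known to vanish) and the connecting homomorphism can push the class up by one dimension. An alternative, purely algebraic route runs through the short exact sequence $0 \to (R/(I(G):x_v))(-1) \to R/I(G) \to R/(I(G),x_v) \to 0$ together with the standard behaviour of regularity along short exact sequences, after identifying $\reg(R/(I(G),x_v)) = \reg(G-v)$ and $\reg(R/(I(G):x_v)) = \reg(G-N_G[v])$; I would keep the topological argument as primary, since it matches the homological definition of $\reg$ adopted here.
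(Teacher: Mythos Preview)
The paper does not prove this corollary at all: it is quoted verbatim from \cite{DHS-boundpd} and used as a black box. Your Mayer--Vietoris argument is correct, including the delicate step where you augment the witnessing set by all of $N_G[v]$ (rather than $\{v\}$ alone) so that the deletion subcomplex falls under the hypothesis $\reg(G-v)<r$ and the connecting map is forced to surject.

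For comparison, the proof in the cited source \cite{DHS-boundpd} is exactly the algebraic route you sketch at the end: one takes the short exact sequence
\[
0 \longrightarrow \bigl(R/(I(G):x_v)\bigr)(-1) \longrightarrow R/I(G) \longrightarrow R/(I(G),x_v) \longrightarrow 0,
\]
identifies the outer terms with $G-N_G[v]$ and $G-v$, and reads off both the bound and the dichotomy from the standard behaviour of regularity in short exact sequences (namely, $\reg$ of the middle is bounded by the maximum of the outer regularities, and if strictly smaller than one outer term then it must equal the other). Your topological proof is a direct translation of this through Hochster's formula, and has the mild advantage of staying entirely within the homological definition of $\reg$ adopted in the present paper; the algebraic version is shorter once the exact-sequence lemma is granted. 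Either is perfectly adequate here.
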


\begin{lemma}\textup{\cite{BC-prime}}\label{lem:deg-one}
If $N_G(x)=\{y\}$ in $G$, then either $\reg(G)=\reg(G-x)$ or else
$\reg(G)=\reg(G-N_G[y])+1$.
\end{lemma}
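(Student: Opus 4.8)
The plan is to derive the statement directly from the inductive bound of Corollary~\ref{cor:induction-sc}, applied twice — once to the leaf $x$ and once to its neighbour $y$ — and then to discard one branch by a purely arithmetic observation. The single genuinely geometric ingredient I need is the standard fact that adjoining an isolated vertex leaves the regularity unchanged.

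First I would record that auxiliary fact: if $z$ is an isolated vertex of a graph $H$, then $\reg(H)=\reg(H-z)$. Indeed, for any $S\subseteq V(H)$ the induced subcomplex $\Ind(H)[S]$ coincides with $\Ind(H-z)[S]$ when $z\notin S$, while if $z\in S$ then, $z$ being non-adjacent to everything, $F\mapsto F\cup\{z\}$ exhibits $\Ind(H)[S]$ as a cone with apex $z$, which is contractible and so carries no reduced homology. Hence the induced subcomplexes of $\Ind(H)$ with nonzero reduced homology are exactly those avoiding $z$, i.e.\ the induced subcomplexes of $\Ind(H-z)$, and the maximum defining the regularity is the same for both graphs.

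Now, since $N_G(x)=\{y\}$, the vertex $x$ has no neighbour other than $y$, so $x$ is isolated in $G-y$; the auxiliary fact then gives $\reg(G-y)=\reg(G-\{x,y\})$. I would next invoke Corollary~\ref{cor:induction-sc} for the two vertices $x$ and $y$. Applied to $x$, and using $N_G[x]=\{x,y\}$, it yields
\begin{equation*}
\reg(G)=\reg(G-x)\qquad\text{or}\qquad \reg(G)=\reg(G-\{x,y\})+1 .
\end{equation*}
Applied to $y$, and rewriting $\reg(G-y)=\reg(G-\{x,y\})$, it yields
\begin{equation*}
\reg(G)=\reg(G-\{x,y\})\qquad\text{or}\qquad \reg(G)=\reg(G-N_G[y])+1 .
\end{equation*}

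Finally I would combine the two dichotomies. If $\reg(G)=\reg(G-x)$ we are in the first alternative of the Lemma and are done. Otherwise the first display forces $\reg(G)=\reg(G-\{x,y\})+1$, so in particular $\reg(G)\neq\reg(G-\{x,y\})$; the second display then rules out its first option and leaves $\reg(G)=\reg(G-N_G[y])+1$, the second alternative of the Lemma. The only step requiring care is the auxiliary isolated-vertex fact, together with keeping the neighbourhoods straight (the small set $N_G[x]=\{x,y\}$ versus the potentially larger $N_G[y]$); the remainder is a formal elimination resting on the observation that $r$ and $r+1$ can never coincide.
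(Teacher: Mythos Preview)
Your argument is correct. The paper does not supply its own proof of this lemma; it is quoted from~\cite{BC-prime}, so there is no in-paper proof to compare against. Your derivation from two applications of Corollary~\ref{cor:induction-sc} together with the isolated-vertex observation is valid and self-contained, and fits neatly within the tools already recorded in Section~\ref{sect:prelim}.
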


\begin{lemma}\textnormal{\citep{MK-im,russ}}\label{lem:cochord}
$\im(G)\leq \reg(G)\leq \cd(G)$ for every graph $G$.
\end{lemma}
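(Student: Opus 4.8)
The two inequalities are of a genuinely different nature, so the plan is to treat them separately: the lower bound $\im(G)\le\reg(G)$ comes from exhibiting a sphere as an induced subcomplex of $\Ind(G)$, while the upper bound $\reg(G)\le\cd(G)$ is obtained by recasting regularity as a Leray number and combining Fr\"oberg's theorem with an intersection theorem for Leray complexes. For the lower bound, fix an induced matching $M=\{x_1y_1,\dots,x_ky_k\}$ of maximum size $k=\im(G)$ and set $S=V(M)$. Because $M$ is induced, $G[S]$ is the disjoint union of $k$ edges, so $\Ind(G)[S]=\Ind(G[S])$ is the $k$-fold join of $\Ind(K_2)=\SA^0$; iterating the suspension $\Sigma(-)=\SA^0\ast(-)$ gives $\Ind(G)[S]\cong\SA^{k-1}$. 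Hence $\widetilde{H}_{k-1}(\Ind(G)[S];\kk)\neq 0$, and taking $j=k$ in the definition of regularity yields $\reg(G)\ge k=\im(G)$.

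For the upper bound, I would first observe that, directly from the definition, $\reg(X)\le d$ holds if and only if $X$ is $d$-Leray, meaning $\widetilde{H}_i(X[S];\kk)=0$ for all $i\ge d$ and all $S\subseteq V$. Let $\{H_1,\dots,H_r\}$ be a co-chordal cover with $r=\cd(G)$. Since each $\overline{H_i}$ is chordal, Fr\"oberg's theorem gives $\reg(H_i)\le 1$, i.e. every $\Ind(H_i)$ is $1$-Leray. A vertex subset is independent in $G$ exactly when it is independent in each $H_i$ (the edges of $G$ being covered by those of the $H_i$), so $\Ind(G)=\bigcap_{i=1}^r\Ind(H_i)$. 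Applying the Kalai--Meshulam theorem, which asserts that the intersection of a $d_1$-Leray and a $d_2$-Leray complex is $(d_1+d_2)$-Leray, and iterating over the $r$ factors shows $\Ind(G)$ is $r$-Leray, that is $\reg(G)\le r=\cd(G)$.

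The main obstacle is the Kalai--Meshulam intersection step. Writing $G'=H_1\cup\cdots\cup H_{r-1}$ (edge union), so that $\Ind(G)=\Ind(G')\cap\Ind(H_r)$, one would like to induct on $r$ via the Mayer--Vietoris sequence of the pair $A=\Ind(G')$, $B=\Ind(H_r)$, which expresses the reduced homology of $A\cap B=\Ind(G)$ in terms of that of $A$, of $B$, and of the union $A\cup B$. The factors $A$ and $B$ are controlled, being $(r-1)$-Leray and $1$-Leray by the inductive hypothesis and Fr\"oberg, but the union $A\cup B$ is merely a subcomplex of the simplex, not an independence complex, and its Leray number is not handed to us for free. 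Bounding the union and the intersection simultaneously, for every induced subcomplex at once, is precisely the technical heart of the Kalai--Meshulam theorem; a purely algebraic induction through the short exact sequence $0\to R/(I\cap J)\to R/I\oplus R/J\to R/(I+J)\to 0$ (with $R$ the polynomial ring, $I=I(G')$ and $J=I(H_r)$) runs into the same difficulty, since $I(G')\cap I(H_r)$ is neither an edge ideal nor the edge ideal of a co-chordal graph, so the induction gives no control on $\reg\big(R/(I\cap J)\big)$. Everything else in the argument is formal.
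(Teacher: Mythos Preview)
The paper does not supply its own proof of this lemma; it is quoted as a known result with citations to Katzman and Woodroofe, so there is no in-paper argument to compare against. Your argument is correct and is in fact the standard one found in those references: the lower bound is Katzman's observation that an induced matching of size $k$ produces an induced $(k-1)$-sphere in $\Ind(G)$, and the upper bound is precisely Woodroofe's proof, combining Fr\"oberg's theorem (each co-chordal piece has $\reg\le 1$, i.e.\ is $1$-Leray) with the Kalai--Meshulam intersection theorem for Leray complexes applied to $\Ind(G)=\bigcap_i \Ind(H_i)$. Your diagnosis that the Kalai--Meshulam step is the only nontrivial ingredient, and that a naive Mayer--Vietoris or short-exact-sequence induction fails to control the union term, is accurate; that is exactly why the result is attributed to Kalai--Meshulam rather than being a formal consequence of the setup.
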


The existence of vertices satisfying certain extra properties is useful when dealing with the homotopy type of simplicial complexes. Recall that when $(X,A)$ is a CW pair, then $A$ is said to be \emph{contractible in} $X$, if the inclusion map $A\hookrightarrow X$ is homotopic to a constant map~\citep[Example $0.14$]{AH-top}. Note that a simplicial complex which is a cone of another is contractible. In particular, if a graph $G$ has an isolated vertex $x$, then $\IE(G)$ is contractible, since $\IE(G)$ is the cone of $\IE(G-x)$ with apex $x$.

\begin{theorem}\label{thm:hom-simp-induction}
If $\Ind(G-N_G[x])$ is contractible in $\Ind(G-x)$, then $\Ind(G)\simeq \Ind(G-x)\vee \Sigma(\Ind(G-N_G[x])$.
\end{theorem}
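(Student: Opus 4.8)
The plan is to exhibit $\Ind(G)$ as the mapping cone of the inclusion of $\Ind(G-N_G[x])$ into $\Ind(G-x)$, and then to invoke the standard fact that the mapping cone of a null-homotopic map $f\colon D\to A$ is homotopy equivalent to $A\vee\Sigma D$.

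First I would partition the faces of $\Ind(G)$ according to whether or not they contain the vertex $x$. A face avoiding $x$ is precisely an independent set of $G-x$, so these faces constitute the subcomplex $A:=\Ind(G-x)$. A face $F$ containing $x$ must avoid every neighbour of $x$, hence $F\setminus\{x\}$ is an independent set of $G-N_G[x]$; conversely every such set together with $x$ is independent in $G$. Thus the faces meeting $x$, together with their subfaces, form the cone $B:=\{x\}\ast\Ind(G-N_G[x])$ with apex $x$. Since $G-N_G[x]$ is an induced subgraph of $G-x$, the subcomplex $D:=\Ind(G-N_G[x])$ sits inside $A$, and a direct check shows $A\cap B=D$: the faces of $B$ not containing $x$ are exactly the faces of $D$, and each already lies in $A$. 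Consequently $\Ind(G)=A\cup_D B$, with $B$ a cone on $D$ glued to $A$ along $D$.

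Next I would observe that this description identifies $\Ind(G)$ with the mapping cone $C_i$ of the simplicial inclusion $i\colon D\hookrightarrow A$, since attaching a cone on $D$ to $A$ along the inclusion of its base is by definition the formation of $\mathrm{Cone}(i)$. The hypothesis that $D$ is contractible in $A$ says precisely that $i$ is homotopic to a constant map $c\colon D\to A$. Replacing $i$ by the homotopic map $c$ does not change the homotopy type of the mapping cone, so $\Ind(G)=C_i\simeq C_c$. I would then compute $C_c$ directly: attaching the cone on $D$ to $A$ by collapsing its entire base to a single point $\ast$ collapses both ends of the cylinder $D\times[0,1]$, turning the cone into the suspension $\Sigma D$ wedged onto $A$ at $\ast$. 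This yields $C_c\simeq A\vee\Sigma D$, and hence $\Ind(G)\simeq\Ind(G-x)\vee\Sigma\bigl(\Ind(G-N_G[x])\bigr)$, as required.

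I expect the main technical care to lie in the invariance-of-the-mapping-cone step: one must justify that a homotopy $i\simeq c$ induces a homotopy equivalence $C_i\simeq C_c$, which follows from the homotopy invariance of the mapping-cone construction (a homotopy of attaching maps yields the equivalence after a standard reparametrisation, or one cites the gluing lemma for cofibrations, noting that the inclusion $D\hookrightarrow A$ of a subcomplex is a cofibration). A secondary point to dispatch is the degenerate behaviour when $D$ is empty or a single point, together with the passage between unreduced and reduced suspension; in each of these boundary cases the asserted equivalence is checked directly. Everything else is the routine bookkeeping of which faces of $\Ind(G)$ do and do not contain $x$.
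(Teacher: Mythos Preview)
Your argument is correct and is essentially the standard proof of this folklore result: decompose $\Ind(G)$ as $\Ind(G-x)$ glued to the closed star $\{x\}\ast\Ind(G-N_G[x])$ along $\Ind(G-N_G[x])$, recognise this as the mapping cone of the inclusion, and use homotopy invariance of the mapping cone together with the computation of the cone of a constant map. Your remarks about the cofibration hypothesis and the degenerate cases are apt and handle the only genuine subtleties.

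There is, however, nothing to compare against: the paper states Theorem~\ref{thm:hom-simp-induction} without proof, treating it as a known result (the surrounding text cites Hatcher for the notion of ``contractible in'' and cites Engstr\"om for the dominated-vertex corollary that follows). So your write-up supplies an argument where the paper offers none; what you have is exactly the proof one would expect to find in the background literature.
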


In the case of the independence complexes of graphs, the existence of a (closed or open) dominated vertex may guarantee that the required condition of Theorem~\ref{thm:hom-simp-induction} holds. 

\begin{corollary}\textnormal{\cite{AE}}\label{cor:hom-induction}
If $N_G(u)\subseteq N_G(v)$, then there is a homotopy equivalence $\IE(G)\simeq \IE(G-v)$. 
\end{corollary}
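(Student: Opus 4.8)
The plan is to deduce the statement from Theorem~\ref{thm:hom-simp-induction} applied with $x=v$, by verifying its hypothesis in the strongest possible form: that $\Ind(G-N_G[v])$ is not merely contractible \emph{in} $\Ind(G-v)$ but contractible as a complex, which at once makes the inclusion null-homotopic and collapses the suspension term in the conclusion. Here $u$ and $v$ are understood to be distinct vertices, since otherwise the assertion is vacuous.

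The combinatorial heart of the argument is a single observation. First I would note that $u$ and $v$ must be non-adjacent: were $uv\in E(G)$, then $v\in N_G(u)\subseteq N_G(v)$ would force the loop $v\in N_G(v)$, which is impossible in a simple graph. Hence $u\notin N_G[v]$, so $u$ is a vertex of $G-N_G[v]$, and its neighbours there are exactly $N_G(u)\setminus N_G[v]$. Since $N_G(u)\subseteq N_G(v)\subseteq N_G[v]$, this set is empty, so $u$ is an \emph{isolated} vertex of $G-N_G[v]$. By the remark preceding Theorem~\ref{thm:hom-simp-induction}, an isolated vertex renders the independence complex a cone (with apex $u$); therefore $\Ind(G-N_G[v])$ is contractible. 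In other words, the open domination $N_G(u)\subseteq N_G(v)$ is precisely the condition that isolates $u$ once $N_G[v]$ is deleted.

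It remains to assemble the conclusion. Because $\Ind(G-N_G[v])$ is contractible, the inclusion $\Ind(G-N_G[v])\hookrightarrow \Ind(G-v)$ is null-homotopic, i.e.\ $\Ind(G-N_G[v])$ is contractible in $\Ind(G-v)$, so Theorem~\ref{thm:hom-simp-induction} gives
\[
\Ind(G)\simeq \Ind(G-v)\vee \Sigma\big(\Ind(G-N_G[v])\big).
\]
Since the suspension of a contractible complex is again contractible, the second wedge summand is contractible, and wedging a CW complex with a contractible (well-pointed) summand preserves homotopy type; hence $\Ind(G)\simeq \Ind(G-v)$. I expect the only points genuinely needing an explicit word to be these two standard facts about based CW complexes — that any map out of a contractible complex is null-homotopic, and that a contractible wedge summand may be discarded up to homotopy — whereas the combinatorial content reduces entirely to the one-line isolated-vertex observation above.
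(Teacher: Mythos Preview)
Your argument is correct and is precisely the derivation the paper signals by placing this result as a corollary of Theorem~\ref{thm:hom-simp-induction}: the paper itself gives no proof (it simply cites \cite{AE}), but the intended reasoning is exactly your observation that $u$ becomes isolated in $G-N_G[v]$, making $\Ind(G-N_G[v])$ a cone and hence contractible, so Theorem~\ref{thm:hom-simp-induction} applies and the suspension summand collapses. One small wording fix: when $u=v$ the hypothesis holds trivially but the conclusion is generally false, so the assumption $u\neq v$ is not that the assertion is ``vacuous'' but that it is implicit in the statement.
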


\section{Proofs of Theorems~\ref{thm:main-0} and~\ref{thm:main-1} }

In this section, we provide one of our main results, and show that the biclique vertex partition number of a chordal bipartite graph can be stated in terms of the Castelnuovo-Mumford regularity of its $1$-subdivision graph. In order to do that, we first bring forward some preliminary results from topological combinatorics.

Let $n,m$ be two positive integers. We write the vertex set of the complete bipartite graph $K_{n,m}$ as $V(K_{n,m})=X\cup Y$, where
$X=\{x_1,\ldots,x_n\}$ and $Y=\{y_1,\ldots,y_m\}$. For every pair $(i,j)\in [n]\times [m]$, we denote by $e_{ij}$, the edge between $x_i$ and $y_j$ as well as the corresponding vertex in the $1$-subdivision graph $S(K_{n,m})$. The \emph{whisker} $W_A(G)$ of a graph $G$ at a vertex subset $A\subseteq V(G)$ is the graph obtained from $G$ by attaching a pendant edge to each vertex of $A$ in $G$. In other words, $W_A(G)$ is defined to be the graph on $V(W_A(G)):=V(G)\cup \{a'\colon a\in A\}$ such that $E(W_A(G)):=E(G)\cup \{aa'\colon a\in A\}$.

\begin{proposition}\label{prop:whisker-comp}
The independence complex of the graph $W_Y(S(K_{n,m}))$ is contractible. \end{proposition}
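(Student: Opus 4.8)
The plan is to show that $\Ind(W_Y(S(K_{n,m})))$ is contractible by exhibiting enough of the ``domination'' structure from Corollary~\ref{cor:hom-induction} and the cone observation to collapse the complex all the way down. Let me first fix notation: the graph $W_Y(S(K_{n,m}))$ has vertex set consisting of the $x_i$'s, the $y_j$'s, the subdivision vertices $e_{ij}$, and the whisker vertices $y_j'$ attached to each $y_j$. The edges are $x_ie_{ij}$, $e_{ij}y_j$ (the subdivided edges), and $y_jy_j'$ (the whiskers). The key local feature I would exploit is that each whisker vertex $y_j'$ is a degree-one vertex whose unique neighbor is $y_j$, and the subdivision vertex $e_{ij}$ sits between $x_i$ and $y_j$.

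First I would look for an open-neighborhood containment to apply Corollary~\ref{cor:hom-induction}. The natural candidate is to compare $y_j'$ and $e_{ij}$: we have $N(y_j')=\{y_j\}$, while $N(e_{ij})=\{x_i,y_j\}\supseteq\{y_j\}=N(y_j')$. Hence $N(y_j')\subseteq N(e_{ij})$, so Corollary~\ref{cor:hom-induction} gives a homotopy equivalence $\Ind(G)\simeq\Ind(G-e_{ij})$ (the dominated ``smaller-neighborhood'' vertex is $y_j'$, and we may delete the dominating vertex $e_{ij}$). I would apply this successively to remove \emph{all} the subdivision vertices $e_{ij}$: after deleting $e_{11}$, the containment $N(y_1')\subseteq N(e_{12})$ still holds in the new graph (deleting a vertex does not enlarge neighborhoods of the survivors), so I can peel off $e_{12},\dots,e_{nm}$ one at a time, each time invoking the same $y_j'$ as the dominated vertex. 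This is the main conceptual step, and the thing to verify carefully is that the required containment is preserved after each deletion — which it is, precisely because $y_j'$ retains its unique neighbor $y_j$ throughout, and $y_j$ is never deleted in this phase.

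After removing every $e_{ij}$, the resulting graph decomposes completely: the only surviving edges are the whiskers $y_jy_j'$, and the vertices $x_1,\dots,x_n$ have become isolated (each $x_i$ was adjacent only to the now-deleted subdivision vertices). An isolated vertex makes the independence complex a cone, hence contractible, as the excerpt explicitly notes. So $\Ind(G-\{e_{ij}\})$ is the independence complex of a graph with an isolated vertex (any $x_i$), and is therefore contractible; combined with the chain of homotopy equivalences above, this yields $\Ind(W_Y(S(K_{n,m})))\simeq\Ind(\text{cone})\simeq\ast$, i.e. contractible.

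The step I expect to be the main obstacle is the bookkeeping in the successive application of Corollary~\ref{cor:hom-induction}: I must make sure the deletions are ordered so that at each stage a valid neighborhood containment persists, and that I never delete a vertex ($y_j$) that some remaining $y_j'$ depends on for its domination. One clean way to sidestep repeated re-verification is to argue that the property ``$y_j'$ has $y_j$ as its sole neighbor'' is an invariant under deleting any $e_{ij}$, so the inductive hypothesis is structurally stable; alternatively, one could fold the whole argument into a single induction on the number of remaining subdivision vertices. Either way, once the last $e_{ij}$ is gone the contractibility is immediate from the isolated-vertex/cone observation, so the entire difficulty is concentrated in justifying the chain of equivalences rather than in any homology computation.
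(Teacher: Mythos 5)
Your proof is correct and takes essentially the same route as the paper: the whisker vertex dominates a subdivision vertex via $N(y_j')=\{y_j\}\subseteq\{x_i,y_j\}=N(e_{ij})$, Corollary~\ref{cor:hom-induction} removes the dominating subdivision vertices, and an isolated $x_i$ then makes the independence complex a cone, hence contractible. The only difference is economy: the paper deletes just the single row $e_{11},\dots,e_{1m}$, which already isolates $x_1$, whereas you delete all $nm$ subdivision vertices --- harmless, and your justification that the containment persists under deletions is sound, but unnecessary.
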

\begin{proof}
For simplicity, we write $H_{n,m}=W_Y(S(K_{n,m}))$. For each vertex $y_i\in Y$, let $y^*_i$ denote the whisker at $y_i$. Since $N_{H_{n,m}}(y^*_i)\subseteq N_{H_{n,m}}(e_{1i})$, the complexes $\Ind(H_{n,m})$ and $\Ind(H_{n,m}-e_{1i})$ are homotopy equivalent by Corollary~\ref{cor:hom-induction}. This means that we may remove each vertex $e_{1j}$ from $H_{n,m}$ for every $j\in [m]$ without altering the homotopy type of $\Ind(H_{n,m})$. It then follows that $\Ind(H_{n,m})\simeq \Ind(H_{n,m}-\{e_{1j}\colon 1\leq j\leq m\})$. However, the vertex $x_1$ becomes isolated in the graph $H_{n,m}-\{e_{1j}\colon 1\leq j\leq m\}$. Therefore, the complex
$\Ind(H_{n,m}-\{e_{1j}\colon 1\leq j\leq m\})$ is contractible, so
is $\Ind(H_{n,m})$.
\end{proof}

\begin{proposition}\label{prop:subd-comp}
The independence complex of the graph $S(K_{n,m})$ is homotopy equivalent to a sphere of dimension $(n+m-2)$.
\end{proposition}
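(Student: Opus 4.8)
The plan is to compute the homotopy type of $\Ind(S(K_{n,m}))$ inductively, using the deletion/link long exact sequence encoded in Theorem~\ref{thm:hom-simp-induction}, and to bootstrap off Proposition~\ref{prop:whisker-comp}. The key observation is that the whiskered graph $H_{n,m}=W_Y(S(K_{n,m}))$ differs from $S(K_{n,m})$ only by the pendant vertices $y_i^*$ attached at each $y_i\in Y$. I would like to exploit the relation between a graph with a whisker and the same graph without it via the standard splitting, so that the contractibility result of Proposition~\ref{prop:whisker-comp} feeds directly into a determination of the homotopy type of $\Ind(S(K_{n,m}))$.

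Concretely, I would first induct on $m$ (the number of subdivision-neighbors of each $x_i$, i.e.\ the size of $Y$). For the base case, consider $S(K_{n,1})$: this is a star $K_{1,n}$ subdivided, which is a disjoint union structure that should reduce easily, and I would check directly that $\Ind(S(K_{n,1}))\simeq \SA^{n-1}$. For the inductive step, I would pick the vertex $y_m\in Y$ and apply Theorem~\ref{thm:hom-simp-induction} at $y_m$. Here $S(K_{n,m})-y_m$ is $S(K_{n,m})$ with $y_m$ and all incident subdivision vertices $e_{im}$ dangling, which after removing the now-pendant $e_{im}$'s via Corollary~\ref{cor:hom-induction} should reduce to $S(K_{n,m-1})$ together with isolated vertices or a cone, and $S(K_{n,m})-N[y_m]$ should reduce to $S(K_{n,m-1})$ as well (deleting $y_m$ and its neighbors $e_{im}$ leaves the rest of the subdivision graph). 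The suspension in Theorem~\ref{thm:hom-simp-induction} then raises the sphere dimension by exactly one per step, giving $\SA^{(n-1)+(m-1)}=\SA^{n+m-2}$, provided the contractibility hypothesis of Theorem~\ref{thm:hom-simp-induction} can be verified at each stage.

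The cleanest route is probably to tie this directly to Proposition~\ref{prop:whisker-comp}: the graph $H_{n,m}$ is built from $S(K_{n,m})$ by whiskering $Y$, and there is a general principle relating $\Ind$ of a graph to $\Ind$ of its whiskering. Since $\Ind(H_{n,m})$ is contractible, the mapping-cone/suspension machinery forces $\Ind(S(K_{n,m}))$ to be the suspension (iterated over the $y_i$) of a contractible-versus-sphere dichotomy, and tracking the dimension count yields the $(n+m-2)$-sphere. Alternatively, one can note $S(K_{n,m})$ is vertex-transitive enough in its two partite classes that the symmetric roles of $X$ and $Y$ make the answer symmetric in $n$ and $m$, which is a useful consistency check: $n+m-2$ is symmetric, as it must be.

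The main obstacle I anticipate is verifying the contractibility hypothesis of Theorem~\ref{thm:hom-simp-induction} at each inductive step, i.e.\ showing that $\Ind(S(K_{n,m})-N[y_m])$ includes contractibly into $\Ind(S(K_{n,m})-y_m)$. This is exactly the condition that distinguishes a genuine suspension (sphere dimension $+1$) from a wedge that could collapse. I expect to handle it by identifying, inside $S(K_{n,m})-y_m$, a vertex (likely one of the surviving $e_{im}$ or an $x_i$) whose neighborhood dominates $S(K_{n,m})-N[y_m]$ in the sense needed for a cone/contracting homotopy, so that the inclusion is nullhomotopic. Making this domination explicit, and confirming it survives the Corollary~\ref{cor:hom-induction} reductions, is the delicate part; everything else is routine bookkeeping of sphere dimensions.
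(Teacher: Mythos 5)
Your overall skeleton matches the paper's proof: induct on one partite class, delete a vertex of that class, recognize the link $S(K_{n,m})-N[y_m]\iso S(K_{n,m-1})$, and use Theorem~\ref{thm:hom-simp-induction} to gain one suspension per step. But the crucial step --- the contractibility hypothesis of Theorem~\ref{thm:hom-simp-induction} --- is exactly where your proposal has a genuine gap, and you say so yourself (``the delicate part''). The missing idea is a one-line identification: the deletion $S(K_{n,m})-y_m$ \emph{is} the whisker graph $W_X(S(K_{n,m-1}))$, since after deleting $y_m$ each subdivision vertex $e_{im}$ becomes a pendant attached to $x_i$. Proposition~\ref{prop:whisker-comp} (applied with the roles of the two sides swapped, using $S(K_{n,m-1})\iso S(K_{m-1,n})$) then says $\Ind(S(K_{n,m})-y_m)$ is \emph{contractible}, so the inclusion of the link complex is automatically nullhomotopic and Theorem~\ref{thm:hom-simp-induction} degenerates to a single suspension, $\Ind(S(K_{n,m}))\simeq \Sigma\,\Ind(S(K_{n,m-1}))\simeq \SA^{n+m-2}$. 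This is precisely how the paper proceeds (it deletes $x_1$ and identifies $S(K_{n,m})-x_1\iso W_Y(S(K_{n-1,m}))$); without this identification your induction does not close.

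Worse, the partial reduction you do propose is incorrect and would derail the argument. You claim that in $S(K_{n,m})-y_m$ one can remove ``the now-pendant $e_{im}$'s via Corollary~\ref{cor:hom-induction}'' to reduce to $S(K_{n,m-1})$. Corollary~\ref{cor:hom-induction} removes the vertex with the \emph{larger} neighborhood: a pendant $e_{im}$ (with $N(e_{im})=\{x_i\}$) can only be removed if some other vertex $u$ satisfies $N(u)\subseteq\{x_i\}$, which never happens here. The pendants must instead be used as the \emph{small} vertices, to delete the other subdivision vertices $e_{ij}$ ($j<m$) adjacent to the $x_i$'s, after which some $y_j$ becomes isolated and contractibility follows --- this is the proof of Proposition~\ref{prop:whisker-comp}. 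Note also that your claimed conclusion cannot be right on homotopy-theoretic grounds: if $\Ind(S(K_{n,m})-y_m)$ were homotopy equivalent to $\Ind(S(K_{n,m-1}))\simeq\SA^{n+m-3}$, then Theorem~\ref{thm:hom-simp-induction} (granting its hypothesis) would produce the wedge $\SA^{n+m-3}\vee\SA^{n+m-2}$, contradicting the very statement you are proving. So the deletion complex being contractible is not a technicality to be checked later; it is the content of the inductive step, and it is supplied exactly by Proposition~\ref{prop:whisker-comp} once the whisker identification is made.
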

\begin{proof}
We proceed by the induction $n$. If $n=1$, then $\Ind(S(K_{1,m}))\simeq \Ind(mK_2)\simeq \SA^{m-1}$. So, the claim holds.  
Assume next that $n>1$, and  pick a vertex $x_1\in X$. Observe that $S(K_{n,m})-N_{S(K_{n,m})}[x_1]\cong S(K_{n,m}-x_1)$. Since $S(K_{n,m}-x_1)\cong S(K_{n-1,m})$, the independence complex of the latter graph is homotopy equivalent to the sphere of dimension $(n+m-3)$ by the induction hypothesis. 

On the other hand, since $S(K_{n,m})-x_1$ is isomorphic to the graph $W_Y(S(K_{n-1,m}))$, the deletion subcomplex of the vertex $x_1$ is contractible. It then follows from Theorem~\ref{thm:hom-simp-induction} that $\Ind(S(K_{n,m}))\simeq \Sigma(\Ind(S(K_{n,m})-N_{S(K_{n,m})}[x_1]))\simeq \Sigma(\SA^{n+m-3})\simeq \SA^{n+m-2}$.
\end{proof}

\begin{lemma}\label{lemma:compbipreg}
$\reg(S(K_{n,m}))=n+m-1$ for every pair $n,m$ of positive integers.
\end{lemma}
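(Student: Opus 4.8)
The plan is to sandwich $\reg(S(K_{n,m}))$ between two matching bounds, the lower one read off from the homotopy type already computed and the upper one obtained from a co-chordal cover. For the lower bound I would appeal directly to Proposition~\ref{prop:subd-comp}: since $\Ind(S(K_{n,m}))\simeq \SA^{n+m-2}$, the reduced homology $\widetilde{H}_{n+m-2}(\Ind(S(K_{n,m})))$ is nonzero, so taking $S=V(S(K_{n,m}))$ in the definition of regularity gives at once $\reg(S(K_{n,m}))\geq (n+m-2)+1=n+m-1$. For the reverse inequality I would use $\reg\leq \cd$ from Lemma~\ref{lem:cochord} and produce a co-chordal cover of $S(K_{n,m})$ of length exactly $n+m-1$.

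For the cover, call the edges $x_ie_{ij}$ the \emph{row edges} and the edges $e_{ij}y_j$ the \emph{column edges}. Fix one column, say $b^\ast=1$. For each row $i\in[n]$ let $H_i$ be the subgraph whose edge set is $\{x_ie_{ij}\colon j\in[m]\}\cup\{e_{i1}y_1\}$, that is, the star centered at $x_i$ together with the single column edge at the corner $(i,1)$; and for each remaining column $j\in\{2,\ldots,m\}$ let $H'_j$ be the star centered at $y_j$, with edge set $\{e_{ij}y_j\colon i\in[n]\}$. This yields $n+(m-1)=n+m-1$ subgraphs, and they cover all edges: every row edge lies in some $H_i$, every column edge in column $1$ is of the form $e_{i1}y_1\in H_i$, and every column edge in a column $j\geq 2$ lies in $H'_j$. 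Crucially, the tails of the $H_i$ are concentrated in a single column so that column $1$ is fully covered without spending a separate star on it, which is precisely what buys the saving from $n+m$ down to $n+m-1$.

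The main obstacle is the verification that each member of this family is co-chordal. Each $H'_j$ is a star, hence trivially co-chordal. Each $H_i$ is a star with one extra pendant edge, and the only pair of its edges that can be vertex-disjoint, namely $x_ie_{ij}$ (for $j\neq 1$) and $e_{i1}y_1$, is joined by the edge $x_ie_{i1}$; thus $H_i$ is a bipartite graph containing no induced $2K_2$. I would discharge the co-chordality via the standard fact that a bipartite graph has chordal complement if and only if it has no induced $2K_2$ (equivalently, it is a chain graph): in the complement each part is a clique, so any induced cycle of length at least four must be an induced $4$-cycle, and such a cycle corresponds exactly to an induced $2K_2$ in the original graph. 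Establishing (or citing) this equivalence, and hence that every $H_i$ and $H'_j$ is co-chordal, is the technical heart of the argument; once it is in place, Lemma~\ref{lem:cochord} gives $\reg(S(K_{n,m}))\leq \cd(S(K_{n,m}))\leq n+m-1$, completing the equality. As an alternative to the cover, one could instead run an induction on $n$ via Corollary~\ref{cor:induction-sc} by deleting $x_1$, where $S(K_{n,m})-N[x_1]\cong S(K_{n-1,m})$ is handled by the inductive hypothesis; but the deletion term $S(K_{n,m})-x_1\cong W_Y(S(K_{n-1,m}))$ then forces a separate and messier bound on the regularity of the whiskered graph, so I would favor the co-chordal cover.
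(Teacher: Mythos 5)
Your proposal is correct and follows essentially the same route as the paper: the lower bound from Proposition~\ref{prop:subd-comp} and the upper bound via Lemma~\ref{lem:cochord} using a co-chordal cover of size $n+m-1$ consisting of stars with one star-class augmented by pendant edges into a fixed vertex of the other side (your cover is just the paper's with the roles of $X$ and $Y$ swapped). The only difference is that you explicitly verify co-chordality via the fact that a $2K_2$-free bipartite graph has chordal complement, a point the paper leaves implicit.
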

\begin{proof}
Firstly, we have $\reg(S(K_{n,m}))\geq n+m-1$ by Proposition~\ref{prop:subd-comp}. To complete the proof, we provide a co-chordal covering of $S(K_{n,m})$ of size $(n+m-1)$ so that $\reg(S(K_{n,m}))\leq \cd(S(K_{n,m}))\leq n+m-1$ by Lemma~\ref{lem:cochord}. For each $j\in [m]$, let
$A_j$ be the subgraph of $S(K_{n,m})$ consisting of the star centered at $y_j$ together with the additional edge $(e_{1j},x_1)$. Similarly, for each $2\leq i\leq n$, let $B_i$ be the star in $S(K_{n,m})$ centered at the vertex $x_i$. Now, the family $\{A_j,B_i\colon 1\leq j\leq m\;\textnormal{and}\; 2\leq i\leq n\}$ is a co-chordal covering of $S(K_{n,m})$. This completes the proof.
\end{proof}

\begin{proof}[{\bf Proof of Theorem~\ref{thm:main-0}}]
Suppose that $\bp(G)=k$ and let $\B:=\{B_1,\ldots,B_k\}$ be a biclique vertex partition of $G$. We fix a bipartition $V(B_i)=X_i\cup Y_i$, and set $T_i:=\{xy\in E(B_i)\colon x\in X_i, y\in Y_i\}$ for each $1\leq i\leq k$. If we define  
\begin{displaymath}
H_i:=S(G)[X_i\cup Y_i\cup T_i]\cong S(K_{|X_i|,|Y_i|}),
\end{displaymath}
we have that $\reg(H_i)=|X_i|+|Y_i|-1$ by Lemma~\ref{lemma:compbipreg}.
Observe further that there exists no edges between vertices in $H_i$ and $H_j$ for any $i\neq j$. Moreover, the graph $H(G):=\bigcup\limits_{i=1}^k H_i$ is an induced subgraph of $S(G)$ so that $\reg(S(G))\geq \reg(H(G))=\sum_{i=1}^n\reg(H_i)=|G|-\bp(G)$ as claimed.
\end{proof}

We remark that the gap between $\reg(S(G))$ and $|G|-\bp(G)$ could be arbitrarily large even for bipartite graphs. Indeed, let $p\geq 3$ and $n\geq 2$ be  integers, and denote by $R^p_n$, the graph obtained from the disjoint union of $n$ copies of $C_{p}$ together with a star $K_{1,n}$ in which each degree one vertex $x_i$ of $K_{1,n}$ is adjacent to a unique vertex of the $i^{\textnormal{th}}$-copy of $C_{p}$ for every $1\leq i\leq n$ so that $\deg_{R^p_n}(x_i)=2$ (see Figure~\ref{fig:k14-extend} for the illustration of the graph $R^4_4$). For $p=10$, we have that
\begin{displaymath}
|R^{10}_n|-\bp(R^{10}_n)=(11n+1)-4n=(7n+1)<8n=\reg(S(nC_{10})\cup S(K_{1,n}))\leq \reg(S(R^{10}_n))
\end{displaymath} 
for every $n\geq 2$, where $R^{10}_n$ is clearly a bipartite graph.

However, we note that the lower bound of Theorem~\ref{thm:main-0} is stronger than the induced matching bound of Lemma~\ref{lem:cochord}. Indeed, it is not difficult to prove that $\im(S(G))=|G|-\gamma(G)$ for every graph $G$ without any isolated vertex. Therefore, we conclude that
\begin{displaymath}
\im(S(G))=|G|-\gamma(G)\leq |G|-\bp(G)\leq \reg(S(G)),
\end{displaymath}
where the first inequality could be strict for various graphs. For instance, $\gamma(R^4_n)=2n$ and $\bp(R^4_n)=n+1$ for every $n\geq 2$.

\begin{figure}[ht]
\centering 
\begin{tikzpicture}
\node [nod2] at (0,0) (X) []{};
\node [nod2] at (0.5,0.5) (X1) []{};
\node [nod2] at (-0.5,0.5) (X2) []{};
\node [nod2] at (-0.5,-0.5) (X3) []{};
\node [nod2] at (0.5,-0.5) (X4) []{};
\node [nod2] at (1,1) (A) []{};
\node [nod2] at (-1,1) (B) []{};
\node [nod2] at (-1,-1) (C) []{};
\node [nod2] at (1,-1) (D) []{};
   
    \draw (X) -- (X1);
    \draw (X) -- (X2);
    \draw (X) -- (X3);
    \draw (X) -- (X4);
    \draw (A) -- (X1);
    \draw (B) -- (X2);
    \draw (C) -- (X3);
    \draw (D) -- (X4);
    \node [nod2] at (1.8,1) (A1) []{};
    \node [nod2] at (1,1.8) (A2) []{};
    \node [nod2] at (1.8,1.8) (A3) []{};
    \draw (A) -- (A1) -- (A3) -- (A2) -- (A);
    
    \node [nod2] at (-1.8,1) (B1) []{};
    \node [nod2] at (-1,1.8) (B2) []{};
    \node [nod2] at (-1.8,1.8) (B3) []{};
    \draw (B) -- (B1) -- (B3) -- (B2) -- (B);
    
    \node [nod2] at (-1.8,-1) (C1) []{};
    \node [nod2] at (-1,-1.8) (C2) []{};
    \node [nod2] at (-1.8,-1.8) (C3) []{};
    \draw (C) -- (C1) -- (C3) -- (C2) -- (C);
    
    \node [nod2] at (1.8,-1) (D1) []{};
    \node [nod2] at (1,-1.8) (D2) []{};
    \node [nod2] at (1.8,-1.8) (D3) []{};
    \draw (D) -- (D1) -- (D3) -- (D2) -- (D);
    
\end{tikzpicture}
\caption{The graph $R^4_4$.}
\label{fig:k14-extend}
\end{figure}
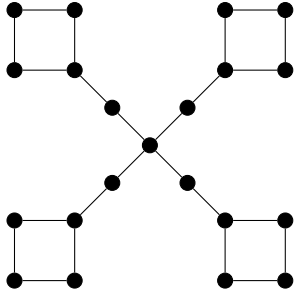

For a given graph $G$, we denote by $G^\circ$, the set of all non-isolated vertices in $G$.

\begin{theorem}\label{thm:sg-tau}
$\reg(S(G))\leq |G|-\gamma^i(G)$ for any graph $G$.
\end{theorem}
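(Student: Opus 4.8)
The plan is to prove the bound by induction on $|V(G)|+|E(G)|$, using the inductive regularity estimate of Corollary~\ref{cor:induction-sc} applied to a carefully chosen \emph{original} vertex of $S(G)$. Fix an independent set $I^*$ of $G$ realizing $\gamma^i(G)$, i.e.\ with $\gamma(I^*,G)=\gamma^i(G)$. After disposing of isolated vertices (if $x$ is isolated then $\reg(S(G))=\reg(S(G-x))$ while $|G|-\gamma^i(G)=|G-x|-\gamma^i(G-x)$, since an isolated vertex lies in every optimal set and contributes $1$ to both $|G|$ and $\gamma^i(G)$), one may assume $G$ has an edge; then $V(G)$ is not independent and there is a vertex $v\in V(G)\setminus I^*$, which is automatically non-isolated. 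I would run the whole argument as a simultaneous induction on two statements: the target $\reg(S(G))\le|G|-\gamma^i(G)$, and a \emph{relative} version $\reg(S(G)-v)\le|G|-\gamma^i(G)$ valid whenever $v$ can be omitted from some optimal independent set (so that $\gamma^i(G-v)\ge\gamma^i(G)$), establishing the relative statement first at each stage. The target then follows by applying Corollary~\ref{cor:induction-sc} to the chosen $v\in V(G)\setminus I^*$ and bounding the two resulting terms.

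The closed-neighbourhood term is the easy one. A direct check gives $S(G)-N_{S(G)}[v]\cong S(G-v)$, since removing $v$ together with its incident subdivision vertices $\{e_{vu}:u\in N_G(v)\}$ leaves exactly the subdivision of $G-v$. Because $v\notin I^*$, the set $I^*$ is still independent in $G-v$, and any set dominating $I^*$ in $G-v$ also dominates it in $G$; hence $\gamma^i(G-v)\ge\gamma(I^*,G-v)\ge\gamma(I^*,G)=\gamma^i(G)$. Applying the inductive hypothesis to $G-v$ yields
\[
\reg\!\big(S(G)-N_{S(G)}[v]\big)+1=\reg(S(G-v))+1\le\big(|G-v|-\gamma^i(G-v)\big)+1\le|G|-\gamma^i(G).
\]

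The substantial term is the deletion term $\reg(S(G)-v)$, and here $S(G)-v$ is \emph{not} a subdivision: it equals the whiskered graph $W_{N_G(v)}(S(G-v))$, obtained from $S(G-v)$ by attaching a pendant vertex $e_{vu}$ at each $u\in N_G(v)$. To bound its regularity I would peel these pendants one at a time. For a suitably chosen $u_0\in N_G(v)$, the vertex $e_{vu_0}$ has degree one in $S(G)-v$, so Lemma~\ref{lem:deg-one} forces $\reg(S(G)-v)$ to equal either $\reg(S(G-vu_0)-v)$ (deleting the pendant corresponds to deleting the edge $vu_0$) or $\reg\!\big(W_{N_G(v)\setminus\{u_0\}}(S(G-v-u_0))\big)+1=\reg(S(G-u_0)-v)+1$. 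Both are instances of the relative statement for graphs with strictly fewer edges, respectively vertices, and the value $|G|-\gamma^i(G)$ is preserved because $\gamma^i$ does not decrease under edge deletion and does not decrease under deletion of a vertex chosen outside some optimal independent set (which is why $u_0$ must be selected with care).

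The main obstacle is exactly this bookkeeping of the independence domination number through the peeling. The naive ``whiskered'' inequality $\reg\!\big(W_A(S(H))\big)\le|H|-\gamma^i(H)$ is \emph{false} — already $W_{V(K_2)}(S(K_2))=P_5$ has regularity $2>1=|K_2|-\gamma^i(K_2)$ — so the apex vertex $v$ must be retained in the estimate throughout, and this is precisely why the induction is phrased relative to $v$ and why the avoidability hypothesis $\gamma^i(G-v)\ge\gamma^i(G)$ has to be shown to propagate to each smaller instance. The delicate case is when every optimal independent set is forced to meet the set of vertices one wishes to delete; checking that the recursion still closes there, and that the degenerate configurations involving isolated or pendant vertices remain consistent, is where the real work lies. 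I would also stress that one cannot simply route this through the co-chordal estimate $\reg\le\cd$ of Lemma~\ref{lem:cochord}: because $S(G)$ always has girth at least six, its co-chordal subgraphs are severely constrained (each is essentially a ``broom''), and on the incidence graph of the Fano plane, where $\gamma^i=3$, this constraint appears to prevent any co-chordal cover from reaching $|G|-\gamma^i(G)$. Thus the sharper Corollary~\ref{cor:induction-sc}, rather than Lemma~\ref{lem:cochord}, is essential to the argument.
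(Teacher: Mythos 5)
Your setup is sound and matches the skeleton of the paper's argument: choosing $v$ outside an optimal independent set $I^*$, the isomorphism $S(G)-N_{S(G)}[v]\cong S(G-v)$, the estimate $\gamma^i(G-v)\geq\gamma(I^*,G-v)\geq\gamma(I^*,G)=\gamma^i(G)$ for the closed-neighbourhood term, and the identification of $S(G)-v$ as a whiskered graph whose pendants are peeled by Lemma~\ref{lem:deg-one} are all correct. The genuine gap is in how you close the deletion term. Your recursion routes through a ``relative'' statement for the intermediate graphs $G-vu_0$ and $G-u_0$, and this requires the avoidability hypothesis to propagate to those graphs --- which it does not, and you supply no mechanism for when it fails, only the remark that this ``is where the real work lies.'' Note that you cannot choose which alternative of Lemma~\ref{lem:deg-one} fires, so for the single $u_0$ you select, \emph{both} branches must close. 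The first branch needs some optimal independent set of $G-vu_0$ to avoid $v$: this fails outright when $N_G(v)=\{u_0\}$, since then $v$ is isolated in $G-vu_0$ and lies in \emph{every} optimal independent set ($\gamma^i(G-vu_0)=\gamma^i(G-v)+1$). The second branch needs $\gamma^i(G-u_0)\geq\gamma^i(G)$, i.e.\ $u_0$ itself avoidable in $G$; but every neighbour of $v$ may lie in every optimal independent set, and deleting such a vertex can genuinely decrease $\gamma^i$ (in $P_4$ the unique optimal independent set is the pair of endpoints, and deleting an endpoint drops $\gamma^i$ from $2$ to $1$). Moreover the hypothesis must survive not one but arbitrarily many such edge/vertex removals as the peeling proceeds, and each removal is exactly of the type that can destroy it. These cases are the heart of the proof, not a routine consistency check.

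The paper's proof shows what the missing idea is: it never maintains an invariant at intermediate whiskered graphs. Within a single induction step it applies Lemma~\ref{lem:deg-one} \emph{consecutively to all} pendants $e_i=xx_i$ of $S(G)-x$, which yields the equality $\reg(S(G)-x)=\reg(S(G-(B\cup\{x\})))+|B|$ for some $B\subseteq N_G(x)$; the recursion thus lands back on a genuine subdivision graph, where the plain inductive hypothesis applies. The $\gamma^i$-bookkeeping is then done once, directly against the original optimal pair: with $A$ the optimal independent set, $D$ the vertices of $A$ isolated in $G-(B\cup\{x\})$, $Y$ a minimum set dominating the surviving part of $A$ there, and $Z$ one $G$-neighbour per vertex of $D$, the set $Y\cup Z\cup\{x\}$ dominates $A$ in $G$ (vertices of $A$ inside $B$ are handled by $x$, since $B\subseteq N_G(x)$), whence $|Y|\geq\gamma^i(G)-|D|-1$. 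The $+|B|$ shift in regularity is offset exactly by the $|B|$ deleted vertices, and the $|D|+1$ loss in $\gamma^i$ by the deletion of $x$ and of the $|D|$ isolated vertices; no optimality statement about any intermediate graph is ever invoked. If you want to rescue your plan, the repair is precisely this reorganization: unroll the pendant-peeling completely inside one inductive step and compare dominating sets directly in $G$, rather than trying to propagate an avoidability invariant graph by graph.
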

\begin{proof}
We proceed by an induction on the order of $G$. 
Note that isolated vertices (when exist) contribute equally to both the independence domination number and the order of the graph $G$, while having no effect to the regularity. We may therefore disregard their existence throughout the proof.

Suppose that $\gamma^i(G)=\gamma(A,G^{\circ})=|X|$ for some independent set $A$ of $G^{\circ}$ such that $A\subseteq N_G(X)$. We can always pick a vertex $x\in X$ such that $x\notin A$, since $A$ is independent and $X$ is minimal. We then apply the inequality of Corollary~\ref{cor:induction-sc} to $S(G)$ at the vertex $x$. It follows that $\reg(S(G))\leq \max\{\reg(S(G)-x),\reg(S(G)-N_{S(G)}[x])+1\}$. Observe that $S(G)-N_{S(G)}[x]\cong S(G-x)$. Furthermore, if we let $N_G(x)=\{x_1,\ldots,x_n\}$ and define $H_x:=S(G)-x$, the vertex $e_i:=xx_i$ of $H_x$ is of degree one in $H_x$ for each $i\in [n]$. If we apply Lemma~\ref{lem:deg-one} at these vertices consecutively , it then follows that there exists a subset $B\subseteq N_G(x)$ such that $\reg(H_x)=\reg(S(G-(B\cup\{x\})))+k$, where $|B|=k$. Therefore, we have either $\reg(S(G))=\reg(S(G-x))+1$ or else $\reg(S(G))=\reg(S(G-(B\cup \{x\})))+k$ by Corollary~\ref{cor:induction-sc}.

Assume first that $\reg(S(G))=\reg(S(G-x))+1$. If no vertex of $A$ is isolated in $G-x$,  any subset of $G-x$  dominating $A$ in $G-x$ also dominates $A$ in $G$ so that $\gamma^i(G)=\gamma(A,G^{\circ})\leq \gamma(A,(G-x)^{\circ})\leq \gamma^i(G-x)$. Then by the induction, we have $\reg(S(G))=\reg(S(G-x))+1\leq (|G|-1)-\gamma^i(G-x)+1\leq |G|-\gamma^i(G)$. If $C$ is a subset of $A$ consisting of those vertices that are isolated in $G-x$, then $\gamma(A-C,(G-x)^{\circ})\geq \gamma(A,G^{\circ})-1$ so that $\gamma^i(G-x)\geq \gamma^i(G)-1$.
Therefore, we have $\reg(S(G))=\reg(S(G-x))+1\leq (|G|-|C|-1)-\gamma^i(G-x)+1\leq (|G|-2)-(\gamma^i(G)-1)+1=|G|-\gamma^i(G)$.

Suppose next that $\reg(S(G))=\reg(S(G-(B\cup \{x\})))+k$. Observe first that if $k=0$, then $B=\emptyset$. In such a case,  since $\reg(S(G))=\reg(S(G-x))$,  we conclude the claim by a similar argument as  above. If $k>0$, assume that $D$ is a subset of $A$ such that any vertex in $D$ is isolated in $G-(B\cup \{x\})$, and let $\gamma(A-D,(G-(B\cup\{x\}))^{\circ})=|Y|$. Pick a neighbor for each vertex of $D$ in $G$, and let $Z$ be the set of these neighbors so that $|Z|\leq |D|$. Observe that $A\subseteq N_G(Y\cup Z\cup \{x\})$. Therefore, we have $\gamma(A-D,(G-(B\cup \{x\}))^{\circ})=|Y|\geq |X|-|Z|-1\geq \gamma(A,G^{\circ})-|D|-1$. It then follows that $\gamma^i(G-(B\cup \{x\}))\geq \gamma^i(G)-|D|-1$. Now, we apply the induction to conclude that 
\begin{align*}
\reg(S(G))&=\reg(S(G-B\cup \{x\}))+k\\
&\leq (|G|-|D|-|B|-1)-\gamma^i(G-B\cup \{x\})+k\\
&=|G|-\gamma^i(G).
\end{align*}
\end{proof}

Now, the combination of Theorems~\ref{thm:main-0} and~\ref{thm:sg-tau} shows that $\gamma^i(G)\leq \bp(G)$ for every graph $G$. That completes the proof of Corollary~\ref{cor:main}.\medskip

Observe that both inequalities of Corollary~\ref{cor:main} could be strict even for  chordal bipartite graphs. If we consider the graph $B$ depicted in Figure~\ref{fig:indom-bp}, we have that 
$\gamma^i(B)=2<3=\bp(B)=\gamma(B)$. Next, if we denote by $B\cup C_4$, the bipartite graph obtained from the disjoint union of the graph $B$ and a $4$-cycle, it follows that $3=\gamma^i(B\cup C_4)<4=\bp(B\cup C_4)<5=\gamma(B\cup C_4)$.

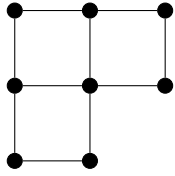
\begin{figure}[ht]
\centering     
\begin{tikzpicture}
\node [nod2] at (0,0) (v1) []{};
\node [nod2] at (1,0) (v2) []{}  
       edge [] (v1);      
\node [nod2] at (2,0) (v3) []{} 
       edge [] (v2);
\node [nod2] at (0,-1) (v4) []{}  
	   edge [] (v1);
\node [nod2] at (1,-1) (v5) []{} 
	   edge [] (v2)
	   edge [] (v4);
\node [nod2] at (2,-1) (v6) []{} 
	   edge [] (v3)
	   edge [] (v5);
\node [nod2] at (0,-2) (v7) []{} 
	   edge [] (v4);
\node [nod2] at (1,-2) (v8) []{} 
	   edge [] (v5)
	   edge [] (v7);
\end{tikzpicture}
\caption{A chordal bipartite graph $B$ such that $\gamma^i(B)=2< 3=\bp(B)$.}
\label{fig:indom-bp}
\end{figure}

We are now ready to complete the proof of Theorem~\ref{thm:main-1}, while we need a technical result first.

\begin{lemma}\label{lem:bp-induct}
If $u,v\in V(G)$ are two vertices with $N_G(u)\setminus \{v\}\subseteq N_G(v)$, then $\bp(G)\leq \bp(G-v)$.
\end{lemma}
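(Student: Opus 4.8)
The plan is to take a minimum biclique vertex partition of $G-v$ and modify it into a biclique vertex partition of $G$ of the same size by reinserting $v$ into one of the existing bicliques. The key structural observation is that the domination hypothesis $N_G(u)\setminus\{v\}\subseteq N_G(v)$ forces $u$ and $v$ to have very similar neighborhoods, so wherever $u$ lands in the partition, the vertex $v$ can be appended without destroying the biclique property.

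First I would fix a biclique vertex partition $\RE=\{R_1,\ldots,R_t\}$ of $G-v$ with $t=\bp(G-v)$, and locate the biclique $R_j$ containing the vertex $u$. Write the bipartition of this biclique as $R_j=A\cup C$ with $u\in A$ (so that every vertex of $A$ is adjacent in $G$ to every vertex of $C$). The goal is to enlarge $R_j$ to a biclique of $G$ that also contains $v$, keeping the other $t-1$ bicliques untouched; this yields a biclique vertex partition of $G$ of size $t$, giving $\bp(G)\le t=\bp(G-v)$.

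The natural move is to place $v$ on the same side as $u$, i.e.\ to try $R_j':=(A\cup\{v\})\cup C$. For this to be a biclique of $G$ I need $v$ adjacent to every vertex of $C$. Since $R_j$ is a biclique containing $u$ on side $A$, every vertex of $C$ lies in $N_G(u)$; moreover $v\notin C$ (as $v\notin G-v$), so $C\subseteq N_G(u)\setminus\{v\}\subseteq N_G(v)$ by hypothesis, which is exactly the required adjacency. Hence $R_j'$ is a complete bipartite subgraph of $G$, and replacing $R_j$ by $R_j'$ in $\RE$ covers all of $V(G)$ with pairwise disjoint bicliques. One degenerate case to dispatch: if $R_j$ consists of the single vertex $u$ (so $C=\emptyset$), then adjacency to $C$ is vacuous and the edge $uv$ itself forms the biclique only if $uv\in E(G)$; if $uv\notin E(G)$ then $\{u,v\}$ with $v$ on the opposite side is not complete, so here I would simply take $R_j'=\{u,v\}$ interpreted as the biclique $\{u\}\cup\{v\}$ when $uv\in E$, and otherwise note that a single-vertex biclique $\{v\}$ adjoined as a new part is not allowed since it would increase the count; this needs care.

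The main obstacle I anticipate is precisely this boundary bookkeeping: ensuring the construction works when $R_j$ is a trivial (single-vertex or one-sided) biclique, and confirming that $v$ genuinely belongs to the enlarged biclique rather than needing a separate part. The clean resolution is to observe that a single vertex $w$ is itself a biclique (with one empty side), and that whenever $uv\in E(G)$ we may always put $v$ opposite $u$; the hypothesis only guarantees $C\subseteq N_G(v)$, which handles the side-of-$u$ placement, so I would argue that at least one of the two placements (same side as $u$, or opposite side via the edge $uv$ when it exists) always produces a valid biclique containing $v$, and verify that the same-side placement suffices in all cases since $C\subseteq N_G(v)$ holds unconditionally. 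Thus no new part is ever needed, and $\bp(G)\le\bp(G-v)$ follows.
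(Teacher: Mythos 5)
Your construction is exactly the paper's proof: fix a minimum biclique vertex partition of $G-v$, take the part containing $u$ with bipartition $A\cup C$, $u\in A$, and replace it by $(A\cup\{v\})\cup C$, the whole point being that $C\subseteq N_G(u)\setminus\{v\}\subseteq N_G(v)$, so the enlarged part is still a biclique while all other parts are untouched. The degenerate case you devote your last paragraph to is silently ignored by the paper as well, and is moot under its definitions: a biclique is a subgraph isomorphic to some $K_{p,q}$ with $p,q\geq 1$, so no part of a partition is a single vertex and $C\neq\emptyset$ always; in any event your fallback of admitting the edgeless pair $\{u\}\cup\{v\}$ as a biclique would be unsound, since by the same token every independent set would be a biclique, trivializing $\bp$.
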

\begin{proof}
Suppose that $\B=\{B_1,\ldots,B_n\}$ is a biclique vertex partition of $G-v$. We may assume without loss of generality that $u\in B_1$. If $V(B_1)=U_1\cup U_2$ such that $u\in U_1$, we then define a new biclique $B'_1$ with $V(B'_1):=(U_1\cup \{v\})\cup U_2$ and set $\B':=\{B'_1,B_2,\ldots,B_n\}$. Obviously, $\B'$ is a biclique vertex partition of $G$. 
\end{proof}

\begin{proof}[{\bf Proof of Theorem~\ref{thm:main-1}}]
Since $\reg(S(B))\geq |B|-\bp(B)$ by Theorem~\ref{thm:main-0}, we are left to prove that $\reg(S(B))\leq |B|-\bp(B)$.

Now, let $x,y\in V(B)$ be given such that $N_B(x)\subseteq N_B(y)$. We then apply the inequality of Corollary~\ref{cor:induction-sc} to $S(B)$ at the vertex $y$. It follows that 
$$\reg(S(B))\leq \max\{\reg(S(B)-y),\reg(S(B)-N_{S(B)}[y])+1\}.$$ 
Observe that $S(B)-N_{S(B)}[y]\cong S(B-y)$. Furthermore, if we let $N_B(y)=\{x_1,\ldots,x_n\}$ and define $H_y:=S(B)-y$, the vertex $e_i:=yx_i$ of $H_y$ is of degree one in $H_y$ for each $i\in [n]$. If we apply Lemma~\ref{lem:deg-one} at these vertices consecutively, it then follows that there exists a subset
$L\subseteq N_B(y)$ such that $\reg(H_y)=\reg(S(B-(L\cup\{y\})))+k$, where $|L|=k$. Therefore, we have either $\reg(S(B))=\reg(S(B-y))+1$ or else $\reg(S(B))=\reg(S(B-(L\cup \{y\})))+k$ by Corollary~\ref{cor:induction-sc}.

Assume first that $\reg(S(B))=\reg(S(B-y))+1$. It then follows from the induction that 
$$\reg(S(B))=\reg(S(B-y))+1\leq |B-y|-\bp(B-y)+1\leq |B|-\bp(B),$$
where the last inequality is due to Lemma~\ref{lem:bp-induct}.

Suppose next that $\reg(S(B))=\reg(S(B-(L\cup \{y\})))+k$. Observe first that if $k=0$, then $L=\emptyset$. In such a case,  we have 
$$\reg(S(B))=\reg(S(B-y))\leq |B-y|-\bp(B-y)\leq |B|-(\bp(B-y)+1)\leq |B|-\bp(B).$$ 

Finally, we consider the case $k>0$. If $\B$ is a biclique vertex partition of $B-(L\cup \{y\})$, then $\B\cup \{K_{1,k}\}$ is a biclique vertex partition for $B$, where $V(K_{1,k})=\{y\}\cup L$. Thus, we have $\bp(B-(L\cup \{y\}))+1\geq \bp(B)$. It then follows from the induction hypothesis that

\begin{align*}
\reg(S(B))=&\reg(S(B-(L\cup \{y\})))+k\\
&\leq |(B-(L\cup \{y\})|-\bp(B-(L\cup \{y\}))+k\\
&=|B|-|L|-1-\bp(B-(L\cup \{y\}))+k\\
&=|B|-(\bp(B-(L\cup \{y\}))+1)\\
&\leq |B|-\bp(B).
\end{align*}
\end{proof}
\section{Proof of Theorem~\ref{thm:main-2}}\label{sect:thmmanin-2}
In this section, we present the proof of Theorem~\ref{thm:main-2} in two steps. In fact, we prove that the claims of Theorem~\ref{thm:main-2} hold for a larger family of bipartite graphs containing chordal bipartite graphs as a subfamily. For that purpose, we introduce a specific elimination scheme for bipartite graphs. This procedure leads to a polynomial time algorithm for determining the homotopy type of $1$-subdivision graphs of bipartite graphs in the aforementioned superclass.

Let $\{e_1,\ldots,e_k\}$ be an ordered subset of edges in $B$ with $k\geq 1$. Then for each $1\leq i\leq k$, we set $B_i:=B_{i-1}-N_{B_{i-1}}[e_i]$, where $B_0:=B$.

\begin{definition}
An ordered subset $\{e_1,\ldots,e_k\}$ of edges in a bipartite graph $B$ is said to be a \emph{biclique elimination sequence} for $B$, if
\begin{itemize}
\item $e_i$ is a bisimplicial edge in $B_{i-1}$ for each $1\leq i\leq k$,\\
\item $B_k$ is an edgeless graph.
\end{itemize}
Furthermore, a biclique elimination sequence $\{e_1,\ldots,e_k\}$ of a  bipartite graph $B$ is called \emph{simple} if at least one end vertex of $e_i$ is a simple vertex of $B_{i-1}$ for every $1\leq i\leq k$.  
\end{definition}

\begin{theorem}\label{thm:main-2-a}
If a bipartite graph $B$ admits a simple biclique elimination sequence, the independence complex of $S(B)$ is either contractible or homotopy equivalent to a sphere.
\end{theorem}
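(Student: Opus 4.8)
The plan is to induct on the number of vertices $|V(B)|$. If $B$ is edgeless then $S(B)=B$ carries only isolated vertices, so $\Ind(S(B))$ is a full simplex and hence contractible, which settles the base case. For the inductive step I would fix the first edge $e_1$ of a simple biclique elimination sequence, let $v$ be a simple endpoint of $e_1$, and write its neighbours as $N_B(v)=\{u_1,\dots,u_b\}$ with $N_B(u_1)\subseteq\cdots\subseteq N_B(u_b)$ (the linear order guaranteed by simplicity). The whole argument rests on two structural identities valid for an arbitrary vertex $y$: first $S(B)-N_{S(B)}[y]\cong S(B-y)$, and second $S(B)-y$ is obtained from $S(B-y)$ by attaching a pendant vertex $z_{py}$ at each $p\in N_B(y)$, i.e. $S(B)-y\cong W_{N_B(y)}(S(B-y))$.

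The key lemma I would isolate is a \emph{collapsing criterion}: if there is a vertex $x$ on the same side as $y$ with $N_B(x)\subseteq N_B(y)$, then $\Ind(S(B)-y)$ is contractible. Its proof mimics Proposition~\ref{prop:whisker-comp}: for each pendant one has $N(z_{py})=\{p\}\subseteq N(z_{pq})$ for every subdivision vertex $z_{pq}$, so by Corollary~\ref{cor:hom-induction} all subdivision vertices incident to $N_B(y)$ may be deleted without changing the homotopy type; after this deletion every neighbour $z_{xq}$ of $x$ (with $q\in N_B(x)\subseteq N_B(y)$) has disappeared, so $x$ becomes isolated and $\Ind(S(B)-y)$ is a cone. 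Granting this, the hypothesis of Theorem~\ref{thm:hom-simp-induction} holds automatically (a subcomplex of a contractible complex is contractible in it), and that theorem degenerates to $\Ind(S(B))\simeq\Sigma\,\Ind(S(B-y))$.

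I would then split into two cases according to $b$. If $b\geq 2$, apply the criterion with $y=u_2$ and $x=u_1$, where $N_B(u_1)\subseteq N_B(u_2)$ and $u_1,u_2$ lie on the same side, obtaining $\Ind(S(B))\simeq\Sigma\,\Ind(S(B-u_2))$; since $B-u_2$ again admits a simple biclique elimination sequence ($e_1$ stays bisimplicial with its biclique shrunk by the vertex $u_2$, the vertex $v$ stays simple, and $B-N_B[e_1]$ is unchanged), the inductive hypothesis applies to the smaller graph $B-u_2$. If $b=1$, then $v$ has degree one, so in $S(B)$ the vertex $v$ is a leaf whose unique neighbour is the subdivision vertex $z=z_{vu_1}$; deleting $z$ isolates $v$, hence $\Ind(S(B)-z)$ is contractible and Theorem~\ref{thm:hom-simp-induction} applied at $z$ gives $\Ind(S(B))\simeq\Sigma\,\Ind(S(B)-\{v,u_1,z\})$. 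Writing $L=N_B(u_1)$, one identifies $S(B)-\{v,u_1,z\}\cong W_{L\setminus\{v\}}(S(B-\{v,u_1\}))$, and folding its pendants as above reduces it to a join of several copies of $\Ind(K_2)\simeq\SA^0$ with $\Ind(S(B_1))$, where $B_1=B-N_B[e_1]$; it even becomes contractible outright when some vertex of $L\setminus\{v\}$ has no neighbour outside the biclique, since such a vertex is isolated after folding. As $B_1$ carries the shorter sequence $\{e_2,\dots,e_k\}$, the inductive hypothesis gives that $\Ind(S(B_1))$ is contractible or a sphere.

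Finally I would invoke closure of the target class: ``contractible or homotopy equivalent to a sphere'' is preserved by suspension and by join with a sphere, because $\SA^a\ast\SA^b\simeq\SA^{a+b+1}$ and the join of a sphere with a contractible space is contractible. Combining this with the two reductions closes the induction. I expect the main obstacle to be the collapsing criterion itself, namely proving that the relevant deletion complex $\Ind(S(B)-y)$ is genuinely contractible so that the wedge produced by Theorem~\ref{thm:hom-simp-induction} degenerates to a single suspension. This is precisely where the simple structure is indispensable: without a dominated vertex $x$ (respectively, without the degree-one simple vertex in the base-layer case) the deletion complex is only a join of spheres, and the wedge would fail to be a sphere.
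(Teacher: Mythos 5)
Your proposal follows essentially the same route as the paper's proof: your ``collapsing criterion'' is precisely the paper's pendant-folding argument (the in-proof adaptation of Proposition~\ref{prop:whisker-comp}), your use of Theorem~\ref{thm:hom-simp-induction} with a contractible deletion complex to convert each vertex removal into a suspension is identical, and your cases $b\geq 2$ and $b=1$ correspond to the paper's peeling of $s_k,\dots,s_2$ followed by the terminal analysis around $e_1$, which in both texts ends with a disjoint union of $K_2$'s and $S(B-N_B[e_1])$. The only organizational difference (induction on $|V(B)|$ removing one vertex per step, versus the paper's induction along the elimination sequence with the whole biclique processed in one round) is cosmetic, as is your use of the wedge theorem at the subdivision vertex $z$ where the paper instead folds $s_1$ away by Corollary~\ref{cor:hom-induction}.

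There is, however, one genuine error, localized in your terminal case $b=1$. The parenthetical claim that the complex ``becomes contractible outright when some vertex of $L\setminus\{v\}$ has no neighbour outside the biclique, since such a vertex is isolated after folding'' is false: such a vertex $p$ always keeps its pendant $z_{pu_1}$, so after folding it forms a $K_2$ (contributing an $\SA^0$ join factor), never an isolated vertex. Concretely, take $B=P_3$ with $e_1=vu_1$ and $L=\{v,p\}$: your remark predicts contractibility, yet $\Ind(S(P_3))=\Ind(P_5)\simeq \SA^1$ (consistent with the paper's Lemma~\ref{lem:dim}, since $\{e_1\}$ is complete of length $1$). The same degenerate configuration, $B_1=B-N_B[e_1]=\emptyset$, also exposes a clash with your base case: if the empty graph is declared ``edgeless, hence contractible,'' then your join formula outputs ``contractible'' for $P_3$, which is wrong, because the independence complex of the empty graph is the empty complex $\{\emptyset\}$, a $(-1)$-sphere rather than a contractible space. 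The vertices that genuinely become isolated after folding --- and are the actual source of contractibility --- are the vertices of $B_1$ all of whose $B$-neighbours lie in $L$, which is what the paper's corresponding remark refers to. The repair is exactly the paper's Case $1$: treat $V(B_1)=\emptyset$ separately, where the folded graph is a disjoint union of $K_2$'s and $\Ind$ is a sphere outright; with that case split (and the false parenthetical deleted), your decomposition and induction are correct as written.
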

\begin{proof}
We may initially assume without loss of generality that $B$ is connected. 
Let $\{e_1,\ldots,e_l\}$ be a simple biclique elimination sequence for
$B$. 

We consider the bisimplicial edge $e_1$, and let $s\in e_1$ be a simple vertex of $B$. We write $N_B(s)=\{s_1,\ldots,s_k\}$ such that $N_B(s_1)\subseteq \ldots \subseteq N_B(s_k)$, and set $N_B(s_1)=\{x_1,x_2,\ldots,x_n\}$, where $s=x_1$ so that $e_1=ss_1$.

For the graph $S(B)-s_k$, each vertex $x_i$ has a degree one neighbor, namely $f_i=s_kx_i$ in $S(B)$. Since $N_{S(B)-s_k}(f_i)\subseteq N_{S(B)-s_k}(s_1x_i)$, we may remove each vertex of the form $s_1x_i$ from $S(B)-s_k$ without altering the homotopy type by Corollary~\ref{cor:hom-induction}. However, in the resulting graph, the vertex $s_1$ becomes isolated; hence, $\Ind(S(B)-s_k)$ is contractible. Then we conclude that $\Ind(S(B))\simeq \Sigma(\Ind(S(B)-N_{S(B)}[s_k]))$  by Theorem~\ref{thm:hom-simp-induction}. Since $S(B)-N_{S(B)}[s_k]\cong S(B-s_k)$, we have $\Ind(S(B))\simeq \Sigma(\Ind(S(B-s_k)))$. If the removal of $s_k$ from $B$ causes a vertex to be isolated, then $\Ind(S(B))$ is contractible.

Now, the vertex $s$ is still a simple vertex in $B-s_k$, we may repeat the above argument one by one to conclude that $\Ind(S(B))\simeq \Sigma^{k-1}(\Ind(S(B-\{s_2,\ldots,s_k\})))$. Let us write $H_s:=S(B-\{s_2,\ldots,s_k\})$. The vertex $s$ has degree one in $H_s$, and $N_{H_s}(s)=\{ss_1\}\subseteq N_{H_s}(s_1)$ so that $\Ind(H_s)\simeq \Ind(H_s-s_1)$. Observe that each vertex $g_i=s_1x_i$ is of degree one in $H_s-s_1$, where $g_1=e_1$ by our earlier convention. This means that if $x_iy\in E$ for some $y\in V$ and $i>1$, we may remove every such  vertex $x_iy$ from $H_s-s_1$ without altering the homotopy type of $\Ind(H_s-s_1)$. In other words, we have $\Ind(H_s-s_1)\simeq \Ind(nK_2\cup S(B-N_B[g_1]))$, where $nK_2$ is the graph consisting of $n$ disjoint copies of $K_2$ whose corresponding edges are $(x_1,s_1x_1),\ldots,(x_n,s_1x_n)$. Once again, we note that if the removal of any of these vertices causes a vertex to be isolated, then $\Ind(H_s-s_1)$ is contractible, so is $\Ind(S(B))$.

{\em Case} $1$: $V(B-N_B[g_1])=\emptyset$. In such a case, we have that $H_s-s_1\cong nK_2$. It then follows that 
\begin{displaymath}
\Ind(S(B))\simeq \Sigma^{k-1}(\Ind(H_s-s_1))\simeq \Sigma^{k-1}(\Ind(nK_2))\simeq \Sigma^{k-1}(\SA^{n-1})\simeq \SA^{n+k-2}.
\end{displaymath}

{\em Case} $2$: $V(B-N_B[g_1])\neq \emptyset$. Since the graphs
$nK_2$ and $S(B-N_B[g_1])$ are disjoint, we have $\Ind(H_s-s_1)\simeq \Sigma^{n}(\Ind(S(B-N_B[g_1])))$. As result we conclude that

\begin{align*}
\Ind(S(B))\simeq \Sigma^{k-1}(\Ind(H_s-s_1))
&\simeq \Sigma^{k-1}(\Sigma^{n}(\Ind(S(B-N_B[g_1]))))\\
&\simeq \Sigma^{n+k-1}(\Ind(S(B-N_B[g_1]))).
\end{align*}
Finally, since $\{e_2,\ldots,e_l\}$ is a simple biclique elimination sequence for the graph $B-N_B[g_1]=B-N_B[e_1]$, the proof follows by the induction.
\end{proof}

Since every chordal bipartite graph has a simple vertex from Proposition~\ref{prop:simple} and the property of being a chordal bipartite is closed under taking induced subgraphs, we have the following.

\begin{corollary}\label{cor:chordalbip-bes}
Every chordal bipartite graph $B$ admits a simple biclique elimination sequence. In particular, $\Ind(S(B))$ is either contractible or homotopy equivalent to a sphere.
\end{corollary}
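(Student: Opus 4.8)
The plan is to build a simple biclique elimination sequence for $B$ directly, by repeatedly extracting a bisimplicial edge one of whose endpoints is simple, deleting its closed neighborhood, and iterating on what remains. The topological dichotomy is then immediate from Theorem~\ref{thm:main-2-a}, so essentially all the content lies in producing the sequence.

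First I would dispose of the trivial case: if $B$ is edgeless then $S(B)=B$ is itself edgeless, so $\Ind(S(B))$ is a simplex and hence contractible, and there is nothing to prove. Assume therefore that $B$ has at least one edge, and proceed by induction on $|B|$ (equivalently, on the number of edges). By Proposition~\ref{prop:simple} the chordal bipartite graph $B$ possesses a simple vertex $s$; writing $N_B(s)=\{u_1,\ldots,u_k\}$ with $N_B(u_1)\subseteq\cdots\subseteq N_B(u_k)$, the linear order guaranteed by simplicity, the argument recorded after Fact~\ref{fact:bisimplicial} shows that $e_1:=su_1$ is a bisimplicial edge of $B$ carrying the simple endpoint $s$. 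This is exactly the first term of a prospective sequence.

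Next I would set $B_1:=B-N_B[e_1]$ and observe that, since the class of chordal bipartite graphs is closed under taking induced subgraphs, $B_1$ is again chordal bipartite. As $N_B[e_1]$ contains at least the two endpoints of $e_1$, we have $|B_1|<|B|$, so the induction hypothesis applies: either $B_1$ is edgeless, in which case the single-term sequence $\{e_1\}$ already satisfies both conditions of the definition, or $B_1$ admits a simple biclique elimination sequence $\{e_2,\ldots,e_l\}$, to which we prepend $e_1$. In either case the resulting ordered edge set $\{e_1,\ldots,e_l\}$ consists of bisimplicial edges with a simple endpoint at each stage and terminates in an edgeless graph, hence is a simple biclique elimination sequence for $B$. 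Finally, Theorem~\ref{thm:main-2-a} yields that $\Ind(S(B))$ is either contractible or homotopy equivalent to a sphere.

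I do not anticipate a genuine obstacle here: the two ingredients---the existence of a simple vertex (Proposition~\ref{prop:simple}) and the hereditariness of chordal bipartiteness---fit together cleanly, and termination is forced by the strict drop in order. The only point requiring a moment's care is the verification that the bisimplicial edge one extracts actually carries a simple endpoint, which is why I would invoke the explicit construction $e_1=su_1$ from the remark following Fact~\ref{fact:bisimplicial} rather than the bare existence statement of Fact~\ref{fact:bisimplicial} itself.
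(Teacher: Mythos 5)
Your proposal is correct and is essentially the paper's own argument: the paper derives this corollary in one sentence from Proposition~\ref{prop:simple} (existence of a simple vertex), the hereditariness of chordal bipartiteness, and Theorem~\ref{thm:main-2-a}, exactly the ingredients you combine. Your version merely makes the implicit induction and the bisimplicial edge $e_1=su_1$ (from the remark after Fact~\ref{fact:bisimplicial}) explicit, which is a faithful expansion rather than a different route.
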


We next show that the parity of the homotopy type of the independence complex of $S(B)$ for a bipartite graph $B$ with a simple biclique elimination sequence can be decided in polynomial time. 

\begin{definition}
Let $e$ be a bisimplicial edge in $B$. We denote by $\Iso_B(e)$, the set of all isolated vertices in $B-N_B[e]$. We call a biclique elimination sequence $\E:=\{e_1,\ldots,e_k\}$,  \emph{complete} if $\Iso_B(\E):=\bigcup\limits_{i=1}^k \Iso_{B_{i-1}}(e_i)=\emptyset$. 

We say that a sequence $\E:=\{e_1,\ldots,e_k\}$, a $\csbe$-\emph{sequence} of a bipartite graph $B$ if it is a complete simple biclique elimination sequence in $B$. In particular, we call a bipartite graph a $\csbe$-\emph{graph} if it admits a $\csbe$-sequence.
\end{definition}

We remark that every bipartite graph is an induced subgraph of a bipartite $\csbe$-graph. Indeed, let $B$ be an arbitrary bipartite graph, and let $C_4^v$ be a copy of $C_4$ for each vertex $v\in V(B)$. Consider the bipartite graph $B(4)$ obtained from the disjoint union of $B$ and $|B|$ copies of $C_4$ by identifying the vertex $v$ with a unique vertex of $C_4^v$ (see Figure~\ref{fig:c4-extend} for the illustration of the graph $C_6(4)$). The graph $B(4)$ is clearly a $\csbe$-graph in which $B$ is an induced subgraph. 

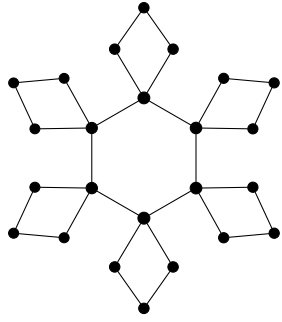
\begin{figure}[ht]
\centering  
\begin{tikzpicture}

    \node[circle, draw, scale=0.3, fill, inner sep = 1pt] (A) at (90:0.8) {A};
    \node[circle, draw, scale=0.3, fill, inner sep = 1pt] (B) at (30:0.8) {B};
    \node[circle, draw, scale=0.3, fill, inner sep = 1pt] (C) at (330:0.8) {C};
    \node[circle, draw, scale=0.3, fill, inner sep = 1pt] (D) at (270:0.8) {D};
    \node[circle, draw, scale=0.3, fill, inner sep = 1pt] (E) at (210:0.8) {E};
    \node[circle, draw, scale=0.3, fill, inner sep = 1pt] (F) at (150:0.8) {F};
    
    \draw (A) -- (B);
    \draw (B) -- (C);
    \draw (C) -- (D);
    \draw (D) -- (E);
    \draw (E) -- (F);
    \draw (F) -- (A);

    \node[circle, draw, scale=0.2, fill, inner sep = 1pt] (A1) at (75:1.5) {A1};
    \node[circle, draw, scale=0.2, fill, inner sep = 1pt] (A2) at (105:1.5) {A2};
    \node[circle, draw, scale=0.2, fill, inner sep = 1pt] (A3) at (90:2) {A3};
    \draw (A) -- (A1) -- (A3) -- (A2) -- (A);
    
    \node[circle, draw, scale=0.2, fill, inner sep = 1pt] (B1) at (15:1.5) {B1};
    \node[circle, draw, scale=0.2, fill, inner sep = 1pt] (B2) at (45:1.5) {B2};
    \node[circle, draw, scale=0.2, fill, inner sep = 1pt] (B3) at (30:2) {B3};
    \draw (B) -- (B1) -- (B3) -- (B2) -- (B);
    
    \node[circle, draw, scale=0.2, fill, inner sep = 1pt] (C1) at (315:1.5) {C1};
    \node[circle, draw, scale=0.2, fill, inner sep = 1pt] (C2) at (345:1.5) {C2};
    \node[circle, draw, scale=0.2, fill, inner sep = 1pt] (C3) at (330:2) {C3};
    \draw (C) -- (C1) -- (C3) -- (C2) -- (C);
    
    \node[circle, draw, scale=0.2, fill, inner sep = 1pt] (D1) at (255:1.5) {D1};
    \node[circle, draw, scale=0.2, fill, inner sep = 1pt] (D2) at (285:1.5) {D2};
    \node[circle, draw, scale=0.2, fill, inner sep = 1pt] (D3) at (270:2) {D3};
    \draw (D) -- (D1) -- (D3) -- (D2) -- (D);
    
    \node[circle, draw, scale=0.2, fill, inner sep = 1pt] (E1) at (195:1.5) {E1};
    \node[circle, draw, scale=0.2, fill, inner sep = 1pt] (E2) at (225:1.5) {E2};
    \node[circle, draw, scale=0.2, fill, inner sep = 1pt] (E3) at (210:2) {E3};
    \draw (E) -- (E1) -- (E3) -- (E2) -- (E);
    
    \node[circle, draw, scale=0.2, fill, inner sep = 1pt] (F1) at (135:1.5) {F1};
    \node[circle, draw, scale=0.2, fill, inner sep = 1pt] (F2) at (165:1.5) {F2};
    \node[circle, draw, scale=0.2, fill, inner sep = 1pt] (F3) at (150:2) {F3};
    \draw (F) -- (F1) -- (F3) -- (F2) -- (F);

\end{tikzpicture}
\caption{The $\csbe$-graph $C_6(4)$.}
\label{fig:c4-extend}
\end{figure}

We further note that if $\E$ is a $\csbe$-sequence in a bipartite graph $H$, then it forms an induced matching of $H$ so that $|\E|\leq \im(H)$, while this inequality could be strict. For instance, consider the disjoint union of stars $K_{1,n}$ and $K_{1,n+1}$ for $n\geq 3$ over the vertex sets $\{x,a_i\colon 1\leq i\leq n\}$ and $\{y,b_j\colon 1\leq j\leq n+1\}$, where $x$ and $y$ are the respective center vertices. Let $H_n$ be the graph obtained from $K_{1,n}\cup K_{1,n+1}$ by the addition of extra edges $a_ib_i$ for every $1\leq i\leq n$. It follows that $\E:=\{yb_{n+1},xa_1\}$ is a $\csbe$-sequence in $H_n$, while $\im(H_n)=n$. 

\begin{lemma}\label{lem:dim}
If a bipartite graph $B$ admits a complete simple biclique elimination sequence  of length $k$, then $\Ind(S(B))\simeq \SA^{|B|-k-1}$.
\end{lemma}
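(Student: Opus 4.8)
The plan is to prove the statement by induction on the length $k$ of the complete simple biclique elimination sequence $\E=\{e_1,\ldots,e_k\}$, re-running the reduction carried out in the proof of Theorem~\ref{thm:main-2-a} while bookkeeping the number of suspensions and using the completeness hypothesis to guarantee that we always remain in the ``sphere'' alternative rather than the contractible one. As in that proof, I write $e_1=ss_1$ with $s$ a simple vertex of $B$, $N_B(s)=\{s_1,\ldots,s_d\}$ ordered so that $N_B(s_1)\subseteq\cdots\subseteq N_B(s_d)$, and $N_B(s_1)=\{x_1,\ldots,x_n\}$ with $s=x_1$. Writing $B_1:=B-N_B[e_1]$, the chain of homotopy equivalences established there (removing $s_d,\ldots,s_2$ via Theorem~\ref{thm:hom-simp-induction}, then the degree-one reductions of Corollary~\ref{cor:hom-induction} that split off $n$ disjoint copies of $K_2$) yields
\begin{displaymath}
\Ind(S(B))\simeq \Sigma^{n+d-1}\big(\Ind(S(B_1))\big)
\end{displaymath}
whenever $B_1\neq\emptyset$, and $\Ind(S(B))\simeq\SA^{n+d-2}$ when $B_1=\emptyset$.

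First I would treat the base case $k=1$. Here $B_1$ is edgeless by the definition of a biclique elimination sequence, and completeness forces $\Iso_B(e_1)=\emptyset$, so $B_1$ has no vertices at all; hence $V(B)=N_B[e_1]$. Since the two partite classes $\{s_1,\ldots,s_d\}$ and $\{x_1,\ldots,x_n\}$ of $N_B[e_1]$ are disjoint, $|B|=n+d$, and the second alternative gives $\Ind(S(B))\simeq\SA^{n+d-2}=\SA^{|B|-2}=\SA^{|B|-k-1}$, as desired.

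For the inductive step $k>1$, the tail $\{e_2,\ldots,e_k\}$ is a complete simple biclique elimination sequence of $B_1$ of length $k-1$ (it is simple and bisimplicial by construction, and $\Iso_{B_1}(\{e_2,\ldots,e_k\})\subseteq\Iso_B(\E)=\emptyset$), so in particular $B_1\neq\emptyset$ and the first alternative applies. By the induction hypothesis $\Ind(S(B_1))\simeq\SA^{|B_1|-(k-1)-1}=\SA^{|B_1|-k}$, whence
\begin{displaymath}
\Ind(S(B))\simeq\Sigma^{n+d-1}\big(\SA^{|B_1|-k}\big)\simeq\SA^{(n+d-1)+(|B_1|-k)}.
\end{displaymath}
Using $|N_B[e_1]|=n+d$ and therefore $|B_1|=|B|-(n+d)$, the exponent simplifies to $|B|-k-1$, completing the induction.

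The delicate point---and the step I expect to be the main obstacle---is justifying that completeness really keeps us out of the contractible alternative at \emph{every} stage of the reduction, not merely at the final graph $B_1$. Each of the homotopy equivalences above can collapse to a contractible complex precisely when one of the intermediate deletions isolates a vertex of the underlying bipartite graph. I would handle this uniformly by observing that every vertex deleted in the reduction for $e_1$ lies in $N_B[e_1]$; hence if a vertex $w\notin N_B[e_1]$ became isolated at some intermediate step, all of its neighbours would already lie in $N_B[e_1]$, forcing $w$ to be isolated in $B_1$ as well and contradicting $\Iso_B(e_1)=\emptyset$. This shows that completeness, which a priori only controls the isolated vertices of each $B_{i-1}-N_{B_{i-1}}[e_i]$, in fact suppresses all intermediate isolations, so the sphere alternative is forced throughout and the suspension count is exactly as recorded.
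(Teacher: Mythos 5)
Your proof is correct and takes essentially the same route as the paper's: both extract from the proof of Theorem~\ref{thm:main-2-a} the suspension relation $\Ind(S(B_{i-1}))\simeq\Sigma^{p_i-1}(\Ind(S(B_i)))$ where $p_i=|N_{B_{i-1}}[e_i]|$ and bookkeep the suspensions, you via formal induction on $k$ (closing with the $B_1=\emptyset$ alternative), the paper by unrolling the iteration and finishing with Proposition~\ref{prop:subd-comp} applied to the final biclique $B_{k-1}$. Your closing paragraph on intermediate isolations is sound but strictly redundant: those homotopy equivalences hold unconditionally (an isolation would only make the suspended complex contractible, which your induction hypothesis already rules out), so completeness is really only needed to force $B_k=\emptyset$, i.e.\ $|B|=p_1+\cdots+p_k$, exactly as in the paper.
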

\begin{proof}
Let $\E=\{e_1,e_2,\ldots,e_k\}$ be a complete simple biclique elimination sequence of $B$. For each $1\leq i\leq k$, we set $B_i:=B_{i-1}-N_{B_{i-1}}[e_i]$ and $p_i=|N_{B_{i-1}}[e_i]|$ as earlier, where $B_0:=B$. Following the proof Theorem~\ref{thm:main-2-a}, we may obtain 
$$\Ind(S(B)) \simeq \Sigma^{p_1-1} (\Ind(S(B-N_B[e_1]))= \Sigma^{p_1-1} (\Ind(S(B_1)).$$
Similarly, we have 
$$\Ind(S(B_1))\simeq \Sigma^{p_2-1} (\Ind(S(B_1-N_B[e_2]))= \Sigma^{p_2-1} (\Ind(S(B_2)),$$
which implies that $\Ind(S(B)) \simeq \Sigma^{p_1+p_2-2}(\Ind(S(B_2))$. Applying the same argument for each but the last step of the elimination, we get
$$\Ind(S(B)) \simeq \Sigma^{p_1+\ldots+p_{k-1}-(k-1)}(\Ind(S(B_{k-1})).$$
Finally, since $B_{k-1}$ is itself a biclique, we have $\Ind(S(B_{k-1}))\simeq \SA^{p_k-2}$ by Proposition~\ref{prop:subd-comp}. Therefore, we conclude
$$\Ind(S(B))\simeq \Sigma^{p_1+\ldots+p_{k-1}-k+1}(\SA^{p_k-2})\simeq \SA^{p_1+\ldots+p_k-k-1} \simeq \SA^{|B|-k-1}.$$
\end{proof}

\begin{proposition}\label{prop:sbe}
If $B$ admits a complete simple biclique elimination sequence $\E$, then every simple biclique elimination sequence in $B$ is complete of length $|\E|$. 
\end{proposition}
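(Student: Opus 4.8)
The plan is to read off the homotopy type of $\Ind(S(B))$ from \emph{any} simple biclique elimination sequence and then to invoke two elementary facts about spheres: a nonempty sphere is never contractible, and spheres of distinct dimensions are not homotopy equivalent. The whole argument is powered by Lemma~\ref{lem:dim}, which converts a complete simple biclique elimination sequence into a precise sphere dimension.

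First I would record what the hypothesis provides. Since $B$ carries a complete simple biclique elimination sequence $\E$ with $|\E|=k$, Lemma~\ref{lem:dim} gives $\Ind(S(B))\simeq \SA^{|B|-k-1}$. Completeness forces $B_k=\emptyset$ (the vertices of the edgeless graph $B_k$ would otherwise be isolated and hence recorded by $\Iso_{B_{k-1}}(e_k)$), so the removed closed neighborhoods partition $V(B)$ into $k$ blocks each of size at least $2$; thus $|B|\geq 2k$ and $|B|-k-1\geq 0$. In particular $\Ind(S(B))$ is a genuine sphere and is \emph{not} contractible.

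Next I would take an arbitrary simple biclique elimination sequence $\E'=\{f_1,\ldots,f_m\}$ of $B$, with reductions $B'_0:=B$ and $B'_i:=B'_{i-1}-N_{B'_{i-1}}[f_i]$, and show it must be complete. Suppose not, and let $j$ be least with $\Iso_{B'_{j-1}}(f_j)\neq\emptyset$, so that $B'_j$ has an isolated vertex $v$. By minimality of $j$ no isolated vertex is created in the first $j-1$ steps, so running the suspension argument of Theorem~\ref{thm:main-2-a} through $f_1,\ldots,f_{j-1}$ yields $\Ind(S(B))\simeq \Sigma^{t}(\Ind(S(B'_{j-1})))$ for some $t\geq 0$. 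But an isolated vertex of $B'_j$ has no incident subdivision vertex, so $v$ is isolated in $S(B'_j)$ and $\Ind(S(B'_j))$ is a cone, hence contractible; the single elimination step for $f_j$ then makes $\Ind(S(B'_{j-1}))$ contractible as well (it is a suspension of $\Ind(S(B'_j))$, or is declared contractible outright inside the proof of Theorem~\ref{thm:main-2-a}). Therefore $\Ind(S(B))\simeq\Sigma^{t}(\text{contractible})$ is contractible, contradicting the previous paragraph. Hence $\E'$ is complete, so Lemma~\ref{lem:dim} applies to give $\Ind(S(B))\simeq \SA^{|B|-m-1}$. Comparing the two equivalences, $\SA^{|B|-k-1}\simeq \SA^{|B|-m-1}$ forces $m=k=|\E|$ by distinctness of sphere dimensions, which is precisely the assertion.

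The hard part will be the middle paragraph: making precise that the first index at which some $\Iso_{B'_{i-1}}(f_i)$ becomes nonempty coincides with the first elimination step at which the proof of Theorem~\ref{thm:main-2-a} produces a contractible factor, and checking that the suspension bookkeeping over the preceding steps remains valid as long as no isolated vertex has yet appeared. Once this dictionary between incompleteness and contractibility is pinned down, the sphere-dimension count finishes the argument with no further computation.
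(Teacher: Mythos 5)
Your proof is correct and follows essentially the same route as the paper's: apply Lemma~\ref{lem:dim} to the complete sequence to pin down $\Ind(S(B))$ as a sphere, argue that any incomplete simple biclique elimination sequence would force contractibility via the proof of Theorem~\ref{thm:main-2-a} (a contradiction), and then compare the sphere dimensions coming from Lemma~\ref{lem:dim} for two complete sequences to equate their lengths. Your extra bookkeeping (locating the first index where an isolated vertex appears, and noting $|B|-|\E|-1\geq 0$) just makes explicit what the paper leaves implicit.
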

\begin{proof}
Let $B$ admit a complete simple biclique elimination sequence $\E=\{e_1,e_2,\ldots,e_k\}$. Then the complex $\Ind(S(B))$ is homotopy equivalent to a sphere by Lemma~\ref{lem:dim}. Now assume that $B$ has a simple biclique elimination sequence which is not complete. Then following the proof of Theorem~\ref{thm:main-2-a}, we conclude that $\Ind(S(B))$ must be contractible, which is a contradiction. Therefore, if $B$ has a simple biclique elimination sequence, all such sequences of $B$ must be complete. 

We now claim that every complete simple biclique elimination sequence of $B$ has the same length $|\E|$. Pick any two such distinct sequences $\E=\{e_1,e_2,\ldots,e_k\}$ and $\F=\{f_1,...,f_r\}$ of $B$. By Lemma~\ref{lem:dim}, we both have $\Ind(S(B))\simeq \SA^{|B|-k-1}$ and $\Ind(S(B))\simeq \SA^{|B|-r-1}$, which implies that $k=r$.
\end{proof}

\begin{corollary}\label{cor:csbe-2}
Let $B$ be a bipartite graph with a simple biclique elimination sequence. Then, the complex $\Ind(S(B))$ is homotopy equivalent to a sphere  if and only if $B$ admits a complete simple biclique elimination sequence.
\end{corollary}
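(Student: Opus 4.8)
The plan is to read the biconditional as two implications and to dispatch each using exactly the three results established just above: Lemma~\ref{lem:dim}, Theorem~\ref{thm:main-2-a}, and the isolated-vertex mechanism that already underlies the proof of Proposition~\ref{prop:sbe}. Throughout, $B$ carries at least one simple biclique elimination sequence, which is the standing hypothesis of the corollary, so in particular Theorem~\ref{thm:main-2-a} guarantees that $\Ind(S(B))$ is either contractible or homotopy equivalent to a sphere. The two outcomes are mutually exclusive (a contractible complex has vanishing reduced homology in every degree, whereas a sphere does not), and the whole point is to match each outcome to the presence or absence of a \emph{complete} such sequence.

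For the backward implication, I would simply invoke Lemma~\ref{lem:dim}: if $B$ admits a complete simple biclique elimination sequence $\E$ of length $k$, then $\Ind(S(B))\simeq \SA^{|B|-k-1}$, which is a sphere. No further work is needed here.

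For the forward implication I would argue by contraposition, assuming that $B$ admits no complete simple biclique elimination sequence and aiming to show that $\Ind(S(B))$ is contractible, hence not a sphere. By the standing hypothesis there is some simple biclique elimination sequence $\F$ of $B$; since no complete one exists, $\F$ itself must fail to be complete, i.e. $\Iso_{B_{i-1}}(e_i)\neq\emptyset$ at some stage $i$. Revisiting the inductive computation in the proof of Theorem~\ref{thm:main-2-a}, at the step where deleting the closed neighborhood of a bisimplicial edge leaves some vertex isolated, the relevant independence complex acquires a cone point and is therefore contractible; because $\Ind(S(B))$ is, up to homotopy, an iterated suspension of that complex, and suspensions of contractible complexes remain contractible, $\Ind(S(B))$ is contractible. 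Assembling the two implications then yields the corollary.

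The only genuinely delicate point is this last claim—that an incomplete simple biclique elimination sequence forces contractibility. I do not expect to re-prove it from scratch, since it is precisely the isolated-vertex observation threaded through the proof of Theorem~\ref{thm:main-2-a} and already exploited in the proof of Proposition~\ref{prop:sbe}; citing that mechanism suffices, and no new combinatorial or topological input is required beyond the earlier results.
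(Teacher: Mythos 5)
Your proof is correct and takes essentially the same route as the paper: the backward direction is exactly Lemma~\ref{lem:dim}, and the forward direction is handled contrapositively by noting that any (necessarily existing) simple biclique elimination sequence must be incomplete, so the isolated-vertex mechanism in the proof of Theorem~\ref{thm:main-2-a} forces $\Ind(S(B))$ to be contractible, hence not a sphere. The paper words this second step as a contradiction rather than a contraposition, but the content, including the reliance on the proof (not just the statement) of Theorem~\ref{thm:main-2-a}, is identical.
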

\begin{proof}
Suppose that $B$ has a complete simple biclique elimination sequence. By Lemma \ref{lem:dim}, the complex $\Ind(S(B))$ is homotopy equivalent to a sphere.
For the converse, assume that $B$ has a simple biclique elimination sequence which is not complete. Then by the proof of Theorem~\ref{thm:main-2-a}, the complex $\Ind(S(B))$ must be contractible, a contradiction. 
\end{proof}

In the view of Corollary~\ref{cor:csbe-2}, in order to complete the proof of Theorem~\ref{thm:main-2}, it is sufficient to show that we can determine whether a bipartite graph admits a complete simple biclique elimination sequence in polynomial time.\medskip 

\textsc{csbe}

{\em Instance}: A bipartite graph $B$;

{\em Question}: Is the graph $B$ a $\csbe$-graph?

\begin{lemma}\label{lem:algo}
The \textsc{csbe} problem is polynomially solvable.
\end{lemma}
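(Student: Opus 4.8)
The plan is to convert the recursive reduction behind Theorem~\ref{thm:main-2-a} into a greedy elimination procedure and to certify it with Proposition~\ref{prop:sbe}. First I note that each primitive is computable in polynomial time: for a fixed vertex $x$ one decides simplicity by listing the neighborhoods $N_B(u)$ over $u\in N_B(x)$ and testing whether they form a chain under inclusion, and for a fixed edge $e$ one tests directly whether $N_B[e]$ induces a biclique; ranging over all vertices and edges, the set of bisimplicial edges possessing a simple endpoint is produced in polynomial time. I also record the following normalization: if $s$ is simple with $N_B(s_1)\subseteq\cdots\subseteq N_B(s_k)$, then $ss_j$ is bisimplicial precisely when $N_B(s_j)=N_B(s_1)$, and for all such $j$ the deleted set $N_B[ss_j]=N_B(s)\cup N_B(s_1)$ is the same; hence the biclique removed at a simple vertex does not depend on the particular bisimplicial edge chosen there.

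The algorithm sets $B_0:=B$ and, while $B_{i-1}$ has an edge, selects any bisimplicial edge $e_i$ of $B_{i-1}$ with a simple endpoint, sets $B_i:=B_{i-1}-N_{B_{i-1}}[e_i]$, and flags the step if $\Iso_{B_{i-1}}(e_i)\neq\emptyset$. If at some point $B_{i-1}$ has an edge but no such $e_i$ exists, it halts and reports that $B$ is not a $\csbe$-graph; otherwise it terminates at an edgeless $B_l$ and reports that $B$ is a $\csbe$-graph exactly when no step was flagged. Each deletion removes at least two vertices, so there are at most $|V(B)|/2$ iterations, each polynomial, and the procedure is polynomial overall. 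Correctness on the terminating branch is immediate from the definitions together with Proposition~\ref{prop:sbe}: the produced $\{e_1,\ldots,e_l\}$ is a simple biclique elimination sequence which, by the definition of $\Iso_B(\E)$, is complete if and only if no step was flagged; so an unflagged run exhibits a $\csbe$-sequence, while a flagged run exhibits a non-complete simple biclique elimination sequence, whence by Proposition~\ref{prop:sbe} the graph $B$ admits no complete one and is not a $\csbe$-graph.

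It remains to justify the ``stuck'' halt, and this is where the single substantive ingredient enters, a confluence lemma: \emph{if $B$ admits a simple biclique elimination sequence, then so does $B-N_B[f]$ for every bisimplicial edge $f$ of $B$ with a simple endpoint.} Granting it, if $B$ admits a simple biclique elimination sequence then each successive graph $B_i$ along the greedy run again admits one, so whenever $B_i$ still has an edge it must possess a bisimplicial edge with a simple endpoint (namely the first edge of its own sequence); thus greedy never gets stuck on such a graph. Contrapositively, a stuck halt certifies that $B$ admits no simple biclique elimination sequence at all, in particular no complete one, exactly as reported.

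To prove the confluence lemma I would induct on $|V(B)|$. Fix a witnessing sequence $\{e_1,\ldots,e_k\}$ with $e_1$ having simple endpoint $s'$, and let $s$ be a simple endpoint of $f$. The two structural facts I would lean on are that simplicity is inherited under vertex deletion---if $N_B(u_1)\subseteq\cdots\subseteq N_B(u_t)$ then $N_B(u_1)\setminus R\subseteq\cdots\subseteq N_B(u_t)\setminus R$ for any set $R$---and the normalization above. When $N_B[e_1]$ and $N_B[f]$ are vertex-disjoint, $s'$ stays simple and $e_1$ stays bisimplicial in $B-N_B[f]$, while $f$ likewise survives in $B-N_B[e_1]$; applying the induction hypothesis to the strictly smaller graph $B-N_B[e_1]$ with the edge $f$ and then prepending $e_1$ produces the desired sequence of $B-N_B[f]$, since disjoint deletions commute. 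The hard part---and the main obstacle of the whole proof---is the overlapping case $N_B[e_1]\cap N_B[f]\neq\emptyset$, in which deleting one biclique can destroy the other and neither $s$ nor $s'$ need persist. I expect to resolve it by a case analysis of how the bicliques of two simple vertices can meet in a bipartite graph, split according to whether $s$ lies in the part of $s'$ or in the opposite part and exploiting the nested neighborhood structure forced by simplicity, in each case producing a replacement initial edge that reduces to a strictly smaller instance covered by the induction.
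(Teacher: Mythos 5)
Your algorithmic framework and the polynomial-time bookkeeping are fine, and your flag-based treatment of isolated vertices is a legitimate (arguably cleaner) variant of the paper's algorithm: the paper only ever selects a bisimplicial edge with a simple endpoint whose deletion creates no isolated vertices, whereas you allow any selection and then invoke Proposition~\ref{prop:sbe} to conclude that a completed-but-flagged run certifies that $B$ is not a $\csbe$-graph; that inference is correct. The genuine gap is that everything rests on your ``confluence lemma,'' which you do not prove: you explicitly defer the overlapping case $N_B[e_1]\cap N_B[f]\neq\emptyset$ to a hoped-for case analysis. That case is not a technical leftover --- it is the entire content of the lemma, and it is exactly what the paper's proof of Lemma~\ref{lem:algo} is devoted to. Worse, in your setup the lemma is needed for arbitrary simple biclique elimination sequences, not just complete ones: after a flagged step the current graph contains isolated vertices, hence can only admit incomplete sequences, and for those the bicliques removed along the sequence need not even cover the vertex set; so the structure you would want to exploit is weaker than in the complete case, a complication your sketch does not address.

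The paper resolves the crux globally rather than locally. Given a $\csbe$-sequence $\{e_1,\ldots,e_t\}$ with $e_i=u_iv_i$, the successively deleted closed neighborhoods form a biclique vertex partition of $B$, and Claim $1$ in the paper's proof shows --- using only bisimpliciality of the candidate edge and the fact that $\{e_1,\ldots,e_t\}$ is an induced matching --- that every bisimplicial edge $e$ of $B$ has both endpoints in a single partition class, and moreover that no endpoint of $e$ is adjacent to any $u_j$ or $v_j$ with $j\neq i$. Consequently deleting $N_B[e]$ removes that class together with, at worst, some non-endpoint vertices of other classes, the remaining sequence $e_1,\ldots,e_{i-1},e_{i+1},\ldots,e_t$ survives as a $\csbe$-sequence of $B-N_B[e]$, and induction on $t$ finishes the argument. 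Note that the overlap you worry about genuinely occurs: $N_B[e]$ can properly contain its own partition class and intersect another one (e.g., join a $K_{2,2}$-class and a $K_{1,2}$-class by two extra edges incident to a non-endpoint vertex of the $K_{1,2}$), so a purely local analysis of how the bicliques of two simple vertices meet cannot be expected to suffice; what makes the paper's argument work is the partition induced by the assumed sequence, not the local geometry of two simple vertices. Without a proof of your confluence lemma, or a substitute for Claim $1$, the ``stuck'' branch of your algorithm is unjustified and the proposal is incomplete.
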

\begin{proof}
We consider the following algorithm. Let $B$ be an arbitrary bipartite graph.

\begin{enumerate}\itemsep -.5pt
\item let $\ell=\{\}$ be an empty list;
\item find a bisimplicial edge $e=uv$ of the graph $B$ such that $u$ or $v$ is a simple vertex and the graph $B-N_B[e]$ does not contain isolated vertices;
      \item if there is no such an edge $e$, then stop; if $B$ has no vertices then  return $\ell$, otherwise print ``the initial graph has no any $\csbe$-sequence'';
  \item add the edge $e$ to the list $\ell$;
  \item delete from the graph $B$ the vertices of the set $N_B[e]$;
    \item go to step $2$.
    \end{enumerate}

It is not hard to see that if the graph $B$ has no any $\csbe$-sequence, then the algorithm, correctly, returns the message ``the graph has no any $\csbe$-sequence''. It remains only to show that for each $\csbe$-graph $B$, the algorithm finds a $\csbe$-sequence of the graph $B$. We use induction on the size $t=|\E(B)|$ of a $\csbe$-sequence of $B$. If $t=1$, then the graph $B$ is a complete bipartite graph. The list $\ell$ ($|\ell|=1$) returned by the algorithm is a $\csbe$-sequence of the graph $B$. Let $t \geq 2$. We consider a $\csbe$-sequence $\E(B)=\{e_1,e_2,\ldots,e_t\}$ with $e_i=u_iv_i$ of the graph $B$. We assume that the vertices $u_1,u_2,\ldots,u_t$ are at the same part of the bipartite graph $B$. In particular, this implies that $\{e_1, e_2, \ldots,e_t\}$ is an induced matching. Once again, we use the following notations: $B_0 = B$ and $B_i=B_{i-1}-N_{B_{i-1}}[e_i]$ for each $i\in [t]$. Notice that induced subgraphs $B_1,\ldots,B_t$ form a biclique vertex partition of the graph $B$. We say that this biclique vertex partition is {\em induced} by $\E(B)$.\medskip

{\it Claim} $1$: Each bisimplicial edge of the graph $B$ is an edge of a biclique $B_i$ for some $i\in \{0,1,2,\ldots,t-1\}$.
\medskip

{\it Proof of Claim} $1$:
Suppose for the contrary that an edge $e=uv$ is a bisimplicial edge of the graph $B$ such that  the vertex $u$ is a vertex of a biclique $B_{i-1}$ and the vertex $v$ is a vertex of a biclique $B_{j-1}$ for some $i, j \in [t]$ with $i \neq j$. There are only two possible cases, both of which are illustrated in Figure~\ref{fig: 2cases}.
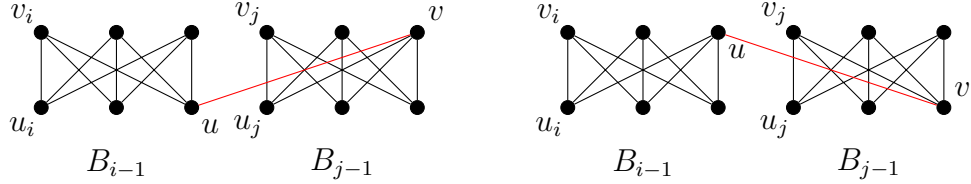
\begin{figure}[ht]
\begin{center}
\begin{tikzpicture}
          \node[draw, fill, circle, inner sep = 0.65mm] (a1) at (0, 0) {};
          \node[draw, fill, circle, inner sep = 0.65mm] (b1) at (1, 0) {};
          \node[draw, fill, circle, inner sep = 0.65mm] (c1) at (2, 0) {};

          \node[draw, fill, circle, inner sep = 0.65mm] (a2) at (0, 1) {};
          \node[draw, fill, circle, inner sep = 0.65mm] (b2) at (1, 1) {};
          \node[draw, fill, circle, inner sep = 0.65mm] (c2) at (2, 1) {};

          \node at (-0.25, -0.25) {$u_i$};
          \node at (-0.25, 1+0.25) {$v_i$};
          \node at (1, -0.75) {$B_{i-1}$};
          
          \draw (a1) -- (a2)
          (a1) -- (b2)
          (a1) -- (c2)
          (b1) -- (a2)
          (b1) -- (b2)
          (b1) -- (c2)
          (c1) -- (a2)
          (c1) -- (b2)
          (c1) -- (c2);


          \node[draw, fill, circle, inner sep = 0.65mm] (a1s) at (0 + 3, 0) {};
          \node[draw, fill, circle, inner sep = 0.65mm] (b1s) at (1 + 3, 0) {};
          \node[draw, fill, circle, inner sep = 0.65mm] (c1s) at (2 + 3, 0) {};

          \node[draw, fill, circle, inner sep = 0.65mm] (a2s) at (0 + 3, 1) {};
          \node[draw, fill, circle, inner sep = 0.65mm] (b2s) at (1 + 3, 1) {};
          \node[draw, fill, circle, inner sep = 0.65mm] (c2s) at (2 + 3, 1) {};

          \node at (-0.25 + 3, -0.25) {$u_j$};
          \node at (-0.25 + 3, 1+0.25) {$v_j$};
          \node at (1 + 3, -0.75) {$B_{j-1}$};
          
          \draw (a1s) -- (a2s)
          (a1s) -- (b2s)
          (a1s) -- (c2s)
          (b1s) -- (a2s)
          (b1s) -- (b2s)
          (b1s) -- (c2s)
          (c1s) -- (a2s)
          (c1s) -- (b2s)
          (c1s) -- (c2s);

          \node at (2+ .25, 0 - .25) {$u$};
          \node at (2 + + 3 .25, 1 + .25) {$v$};
          
          \draw[red] (c1) -- (c2s);


          \node[draw, fill, circle, inner sep = 0.65mm] (a1) at (0 + 7, 0) {};
          \node[draw, fill, circle, inner sep = 0.65mm] (b1) at (1 + 7, 0) {};
          \node[draw, fill, circle, inner sep = 0.65mm] (c1) at (2 + 7, 0) {};

          \node[draw, fill, circle, inner sep = 0.65mm] (a2) at (0 + 7, 1) {};
          \node[draw, fill, circle, inner sep = 0.65mm] (b2) at (1 + 7, 1) {};
          \node[draw, fill, circle, inner sep = 0.65mm] (c2) at (2 + 7, 1) {};

          \node at (-0.25 + 7, -0.25) {$u_i$};
          \node at (-0.25 + 7, 1+0.25) {$v_i$};
          \node at (1 + 7, -0.75) {$B_{i-1}$};
          
          \draw (a1) -- (a2)
          (a1) -- (b2)
          (a1) -- (c2)
          (b1) -- (a2)
          (b1) -- (b2)
          (b1) -- (c2)
          (c1) -- (a2)
          (c1) -- (b2)
          (c1) -- (c2);


          \node[draw, fill, circle, inner sep = 0.65mm] (a1s) at (0 + 3 + 7, 0) {};
          \node[draw, fill, circle, inner sep = 0.65mm] (b1s) at (1 + 3 + 7, 0) {};
          \node[draw, fill, circle, inner sep = 0.65mm] (c1s) at (2 + 3 + 7, 0) {};

          \node[draw, fill, circle, inner sep = 0.65mm] (a2s) at (0 + 3 + 7, 1) {};
          \node[draw, fill, circle, inner sep = 0.65mm] (b2s) at (1 + 3 + 7, 1) {};
          \node[draw, fill, circle, inner sep = 0.65mm] (c2s) at (2 + 3 + 7, 1) {};

          \node at (-0.25 + 3 + 7, -0.25) {$u_j$};
          \node at (-0.25 + 3 + 7, 1+0.25) {$v_j$};
          \node at (1 + 3 + 7, -0.75) {$B_{j-1}$};
          
          \draw (a1s) -- (a2s)
          (a1s) -- (b2s)
          (a1s) -- (c2s)
          (b1s) -- (a2s)
          (b1s) -- (b2s)
          (b1s) -- (c2s)
          (c1s) -- (a2s)
          (c1s) -- (b2s)
          (c1s) -- (c2s);

          \node at (2 + 7 + .25, 1 - .25) {$u$};
          \node at (2 + 7 + 3 .25, 0 + .25) {$v$};
          
          \draw[red] (c2) -- (c1s);

\end{tikzpicture}
\end{center}
\caption{Two possible cases of Claim $1$.}
\label{fig: 2cases}
\end{figure}

As the edge $e=uv$ is bisimplicial, in the first case the vertices $v_i$ and $u_j$ are adjacent, and in the second case the vertices $u_i$ and $v_j$ are adjacent. However, this leads to a contradiction, since $\{e_1,e_2,\ldots,e_k\}$ is an induced matching. This  completes the proof of Claim $1$.

Let $e=uv$ be the bisimplicial edge of the graph $B$ found by the algorithm at the beginning. By the Claim $1$, the edge $e$ is an edge of the biclique $B_{i-1}$ for some $i\in [t]$. It follows that $u_iv, v_iu\in E(B)$. As the edge $e$ is bisimplicial and $\{e_1,e_2,\ldots,e_t\}$ is an induced matching, the vertex $u$ can not be adjacent to vertices $v_1,v_2,\ldots,v_{i-1},v_{i+1},\ldots,v_t$, and similarly, the vertex $v$ can not be adjacent to vertices $u_1, u_2,\ldots,u_{i-1}, u_{i+1},\ldots,u_t$. Consequently, the graph $B_1=B-N_{B}[e]$ is a $\csbe$-graph. The sequence $e_1,e_2,\ldots,e_{i-1}, e_{i+1}, \ldots, e_t$ is a $\csbe$-sequence of the graph $B_1$. By the induction hypothesis, the algorithm finds a $\csbe$-sequence $\ell$ of the graph $B_1$. As a result, the algorithm returns the $\csbe$-sequence $\{e\} \cup \ell$ of the graph $B$.

Finally, we can check in $O(n^2)$ time whether an arbitrary edge of an $n$-vertex bipartite graph $B$ is bisimplicial and contains a simple vertex~\citep{bombod}. Since a complete simple biclique elimination sequence $\E$ contains at most $\frac{n}{2}$ edges, we can construct such a sequence in $O(n^4)$ time if $B$ admits it. 
\end{proof}

Let $B$ be a chordal bipartite graph. If $B$ is used as an input in the algorithm provided in Lemma~\ref{lem:algo}, the algorithm determines in polynomial time whether $B$ is a $\csbe$-graph. If $B$ is not
a $\csbe$-graph, then $\Ind(S(B))$ is clearly contractible by Theorem~\ref{thm:main-2-a} and Corollary~\ref{cor:csbe-2}. Otherwise, the algorithm returns (the size of) a complete simple biclique elimination sequence of $B$ that also determines the dimension of the sphere to which $\Ind(S(B))$ is homotopy equivalent. This concludes the proof of Theorem~\ref{thm:main-2}.\medskip

When a bipartite graph $B$ is a $\csbe$-graph, we denote by $|\E(B)|$,
the length of the complete simple biclique elimination sequence in $B$.

\begin{remark}
The difference $|\mathcal{E}(B)|-\bp(B)$ could be arbitrarily large  even for chordal bipartite $\csbe$-graphs. Consider the bipartite graph $B_p$
given by
\begin{displaymath}
V(B_p)=\{a,b,c_1,\ldots,c_p,d_1,\ldots,d_p\}\quad \textnormal{and}\quad E(B_p):=\{ab, bc_i,c_id_i,d_ia\colon 1\leq i\leq p\}
\end{displaymath}
for $p>2$ (the graph $B_4$ is depicted in Figure~\ref{fig:b_4}). We note that $B_p$ is chordal bipartite and $|\mathcal{E}(B_p)|-\bp(B_p) = p-2$.
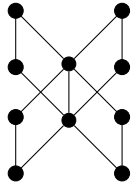
\begin{figure}[ht]
  \begin{center}
      \begin{tikzpicture}
       
        \node[ regular polygon,
      regular polygon sides = 4,
      minimum size = 2cm,
      inner sep=0mm,
      outer sep=0mm,
      rotate=0,
      thick,
      color = white,
      draw] (A) at (0, 0) {};

       \node[ regular polygon,
      regular polygon sides = 4,
      minimum size = 2cm,
      inner sep=0mm,
      outer sep=0mm,
      rotate=0,
      thick,
      color = white,
      draw] (B) at (0, -0.75) {};

       \node[draw, fill, circle, inner sep = 0.65mm] (s) at (0, 0) {};
        \node[draw, fill, circle, inner sep = 0.65mm] (t) at (0, -0.75) {};

      \foreach \i in {1,...,4}{
        \node[draw, circle, fill, inner sep = 2pt] at (A.corner \i) {};
      }

      \foreach \i in {1,...,4}{
        \node[draw, circle, fill, inner sep = 2pt] at (B.corner \i) {};
      }

      \draw (t) -- (B.corner 1)
      (t) -- (B.corner 2)
      (t) -- (B.corner 3)
      (t) -- (B.corner 4);

      \draw (s) -- (A.corner 1)
      (s) -- (A.corner 2)
      (s) -- (A.corner 3)
      (s) -- (A.corner 4);

      \draw (s) -- (t)
      (A.corner 1) -- (B.corner 1)
      (A.corner 2) -- (B.corner 2)
      (A.corner 3) -- (B.corner 3)
      (A.corner 4) -- (B.corner 4);
        \end{tikzpicture}
      \end{center}
      \caption{The chordal bipartite graph $B_4$.}
\label{fig:b_4}
\end{figure}
\end{remark}

We end this section by describing a subclass of bipartite graphs in which the biclique vertex partition number and the length of a complete simple biclique elimination sequence (when exists) coincide. We achieve that by means of introducing a forbidden family of graphs in which these two parameters differ whenever they admit such a sequence.

Let $\Gamma$ be a family of bipartite graphs such that if $B\in \Gamma$, then the followings hold:
\begin{itemize}
\item[$(i)$] $B$ is connected with $\delta(B)\geq 2$,\\
\item[$(ii)$] there exists an induced matching $M$ in $B$ with $|M|>\frac{|B|}{3}$,\\
\item[$(iii)$] for each vertex $v\in V(B)\setminus V(M)$, there exist at least two edges $xy,wz\in M$ such that $N_B(v)\cap \{x,y\}\neq \emptyset$ and $N_B(v)\cap \{w,z\}\neq \emptyset$.
\end{itemize}
The condition $(iii)$ amounts to saying that every vertex of $B$ is either incident to an edge in $M$ or it is adjacent to one of the end vertices of at least two edges in $M$. In Figure~\ref{fig:Gamma}, we depict three examples of graphs from the class $\Gamma$. Note also that the family $\Gamma$ may contain bipartite graphs without any simple biclique elimination. Throughout, we write $(B,M)\in \Gamma$ to indicate that the bipartite graph $B$ is contained in $\Gamma$ and $M$ is an induced matching of $B$ satisfying the required conditions. 

\begin{figure}[ht]
\begin{center}
  \begin{tikzpicture}[scale=.9]
    \def \d  {-4.5};
    \node[circle, draw, fill, inner sep = 1.75pt] at (0 + \d, 0) {};
    \node[circle, draw, fill, inner sep = 1.75pt] at (1 + \d, 0) {};
    \node[circle, draw, fill, inner sep = 1.75pt] at (2 + \d, 0) {};
    \node[circle, draw, fill, inner sep = 1.75pt] at (0 + \d, 1) {};
    \node[circle, draw, fill, inner sep = 1.75pt] at (1 + \d, 1) {};
    \node[circle, draw, fill, inner sep = 1.75pt] at (2 + \d, 1) {};

    \node[circle, draw, fill, inner sep = 1.75pt] at (1 + \d, 2) {};
    \node[circle, draw, fill, inner sep = 1.75pt] at (1 + \d, -1) {};
    
    \draw (0 + \d, 0) -- (0 + \d, 1)
    (1 + \d, 0) -- (1 + \d, 1)
    (2 + \d, 0) -- (2 + \d, 1)
    (1 + \d, 2) -- (0 + \d, 1)
    (1 + \d, 2) -- (1 + \d, 1)
    (1 + \d, 2) -- (2 + \d, 1)
    (1 + \d, -1) -- (0 + \d, 0)
    (1 + \d, -1) -- (1 + \d, 0)
    (1 + \d, -1) -- (2 + \d, 0);

    
    \node[circle, draw, fill, inner sep = 1.75pt] at (0, 0) {};
    \node[circle, draw, fill, inner sep = 1.75pt] at (1, 0) {};
    \node[circle, draw, fill, inner sep = 1.75pt] at (2, 0) {};
    \node[circle, draw, fill, inner sep = 1.75pt] at (0, 1) {};
    \node[circle, draw, fill, inner sep = 1.75pt] at (1, 1) {};
    \node[circle, draw, fill, inner sep = 1.75pt] at (2, 1) {};

    \node[circle, draw, fill, inner sep = 1.75pt] at (1, 2) {};
    \node[circle, draw, fill, inner sep = 1.75pt] at (1, -1) {};
    
    \draw (0, 0) -- (0, 1)
    (1, 0) -- (1, 1)
    (2, 0) -- (2, 1)
    (1, 2) -- (0, 1)
    (1, 2) -- (1, 1)
    (1, 2) -- (2, 1)
    (1, -1) -- (0, 0)
    (1, -1) -- (1, 0)
    (1, -1) -- (2, 0);

    \draw (1, -1) .. controls (3.5, -1) and (3.5, 2) .. (1, 2);

    

    \node[circle, draw, fill, inner sep = 1.75pt] (a1) at (6, 0) {};
    \node[circle, draw, fill, inner sep = 1.75pt] (a2) at (6, 1) {};
    \node[circle, draw, fill, inner sep = 1.75pt] (b1) at (7, 0) {};
    \node[circle, draw, fill, inner sep = 1.75pt] (b2) at (7, 1) {};
    \node[circle, draw, fill, inner sep = 1.75pt] (c1) at (8, 0) {};
    \node[circle, draw, fill, inner sep = 1.75pt] (c2) at (8, 1) {};
    \node[circle, draw, fill, inner sep = 1.75pt] (d1) at (9, 0) {};
    \node[circle, draw, fill, inner sep = 1.75pt] (d2) at (9, 1) {};

    \node[circle, draw, fill, inner sep = 1.75pt] (top) at (7.5, 2) {};
    \node[circle, draw, fill, inner sep = 1.75pt] (bottom1) at (6.5, -1) {};
    \node[circle, draw, fill, inner sep = 1.75pt] (bottom2) at (8.5, -1) {};

    \draw (top) -- (a2)
    (top) -- (b2)
    (top) -- (c2)
    (top) -- (d2)
    (a1) -- (a2)
    (b1) -- (b2)
    (c1) -- (c2)
    (d1) -- (d2)
    (bottom1) -- (a1)
    (bottom1) -- (b1)
    (bottom2) -- (c1)
    (bottom2) -- (d1);
    
    \draw (bottom1) .. controls (4, -1) and (4, 2) .. (top);
    \draw (bottom2) .. controls (11, -1) and (11, 2) .. (top);


\end{tikzpicture}
\end{center}
\caption{Examples of graphs from the family $\Gamma$.}
\label{fig:Gamma}  
\end{figure}
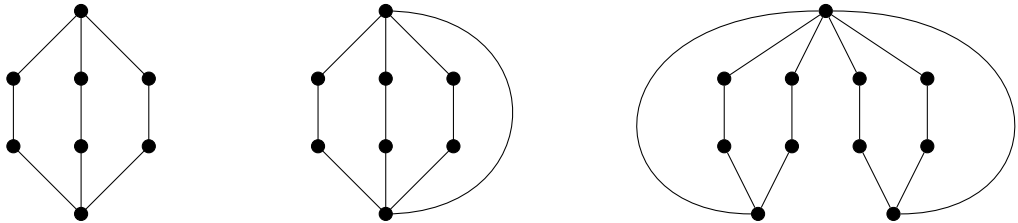

\begin{proposition}
The class $\Gamma$ contains an infinite family of graphs such that none of the proper induced subgraph of a graph from the family is a member of $\Gamma$. 
\end{proposition}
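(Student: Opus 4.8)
The plan is to exhibit an explicit infinite family $\{T_r\}$ of pairwise non-isomorphic members of $\Gamma$ and to prove that each $T_r$ is minimal, i.e.\ no proper induced subgraph lies in $\Gamma$. I would build $T_r$ so as to be \emph{tight} for condition $(ii)$. It consists of $r$ pairwise disjoint edges (``rungs'') $a_ib_i$, $i\in[r]$, forming an induced matching $M$, together with exactly $r-1$ additional (``apex'') vertices, so that $|T_r|=3r-1$ and $|M|=r>|T_r|/3$. Each apex is adjacent only to endpoints of rungs on one side of the bipartition, and the apexes partition $\{a_1,\dots,a_r\}$ and $\{b_1,\dots,b_r\}$ into \emph{blocks} of size $2$ or $3$; since $|T_r|=3r-1$ forces exactly $r-1$ blocks in total, each color class carries exactly one block of size $3$. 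The incidences are chosen to form a single alternating chain $1,2,\dots,r$ (consecutive $a$- and $b$-blocks overlapping in one index) with the two size-$3$ blocks placed so as to close the chain. Concretely, for $r=2n+1$ I take the $a$-blocks $\{1,2\},\{3,4\},\dots,\{2n-3,2n-2\},\{2n-1,2n,2n+1\}$ and the $b$-blocks $\{2,3\},\{4,5\},\dots,\{2n-2,2n-1\},\{2n,2n+1,1\}$. For $n=1$ this is exactly the leftmost graph of Figure~\ref{fig:Gamma}, so the family generalizes that example.

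Checking $T_r\in\Gamma$ is routine: every rung endpoint has degree $2$ (its partner plus its unique apex), every apex has degree $2$ or $3$, so $\delta(T_r)=2$; the chain linkage makes $T_r$ connected; each apex meets at least two rungs, giving $(iii)$; and the rung matching $M$ has $|M|=r>(3r-1)/3$, giving $(ii)$. Since $|T_r|=3r-1\to\infty$, the graphs $T_r$ are pairwise non-isomorphic, so the family is infinite.

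The heart of the argument is minimality, which I would organize around a description of the induced subgraphs of $T_r$ having minimum degree at least $2$. Because each endpoint $a_i$ (resp.\ $b_i$) has only its partner and its apex as neighbors, such a subgraph $B'$ must contain $b_i$ and the apex of $a_i$ whenever it contains $a_i$; hence $B'$ is determined by its set $R\subseteq[r]$ of fully present rungs, and the minimum-degree condition on the apexes translates \emph{exactly} into: $R$ meets every block in $0$ or $\ge 2$ indices (call such an $R$ \emph{admissible}). The decisive structural fact, which I would prove by the cascade along the chain, is that the only admissible $R$ meeting some block in three indices is $R=[r]$: including a whole size-$3$ block forces, through the overlapping size-$2$ blocks, every index into $R$. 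Consequently, for a \emph{proper} admissible $R$ every block is met in exactly two indices, so every vertex of $B'$ has degree exactly $2$; thus $B'$ is a disjoint union of (even) cycles, whence $\im(B')\le |B'|/3$ and $B'$ violates $(ii)$. Finally, any induced subgraph having a vertex of degree at most $1$ violates $(i)$. Therefore no proper induced subgraph of $T_r$ lies in $\Gamma$, and $T_r$ is minimal.

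I expect the main obstacle to be the structural (cascade) fact together with the correctness of the admissibility characterization: one must arrange the block incidences so that connectivity and the ``one full size-$3$ block forces all of $[r]$'' property hold simultaneously for every $r$ in the family, and verify carefully that the degree-$2$ constraint really forces whole rungs and all their apexes into any admissible $B'$. Once this combinatorial core is in place, the elementary inequality $\im(H)\le |H|/3$ for a $2$-regular graph $H$ completes the proof, so no delicate estimation of induced matchings in the various subgraphs is needed beyond this single bound.
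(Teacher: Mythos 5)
Your proof is correct, but it takes a genuinely different route from the paper's. The paper's family is much simpler: $H_k$ is obtained from the cycle $C_{6k}$ by identifying two vertices at distance exactly six, i.e., it is two even cycles $C_6$ and $C_{6k-6}$ glued at a single vertex. With that choice, minimality is essentially immediate: every vertex except the glued one has degree two, so the only induced subgraphs with minimum degree at least two are the two constituent cycles and $H_k$ itself, and a cycle $C_m$ fails condition $(ii)$ since $\im(C_m)=\lfloor m/3\rfloor\leq m/3$. Your family $T_r$ instead generalizes the leftmost graph of Figure~\ref{fig:Gamma} (rungs plus apexes arranged in a cyclic chain of overlapping blocks), and the price is the admissibility/cascade analysis needed to classify the induced subgraphs of minimum degree at least two; I checked that this goes through for your block layout (a full size-$3$ block in $R$ propagates around the chain of size-$2$ blocks and forces $R=[r]$, so every proper admissible subgraph is $2$-regular). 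Both arguments then finish identically: subgraphs with a vertex of degree at most one, or disconnected ones, fail $(i)$, and disjoint unions of cycles fail $(ii)$ via $\im\leq |V|/3$. What the paper's construction buys is brevity --- only two proper candidates to rule out --- while yours buys the slightly stronger insight that the figure's example is the first member of an infinite minimal family, at the cost of the heavier combinatorial core that your sketch correctly flags as the main obstacle and that would need to be written out in full.
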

\begin{proof}
Let $k\geq 3$ be an integer and let $H_k$ be the graph obtained from the cycle $C_{6k}$ by identifying its two vertices $u$ and $v$ at distance exactly six. Observe that $H_k\in \Gamma$.

We claim that there exists no induced subgraph of $H_k$ contained in $\Gamma$. Indeed, deleting any set of vertices from $H_k$ leads to either a cycle or a graph with minimum degree one, neither of which belongs to $\Gamma$.
\end{proof}

\begin{fact}\label{prop:contra}
Let $(B,M)\in \Gamma$ be given. If $U=V\setminus V(M)$ is the set of vertices of $B$ that are not incident to any edges from the matching $M$, then $|M|>|U|\geq 2$.
\end{fact}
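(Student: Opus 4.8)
The plan is to unwind the definition of $\Gamma$ and count vertices. Recall the setup: $(B,M)\in\Gamma$ means $B$ is a bipartite graph satisfying conditions $(i)$--$(iii)$, where $M$ is an induced matching with $|M|>\tfrac{|B|}{3}$, and $U=V\setminus V(M)$ collects the vertices not saturated by $M$. Since $M$ is a matching, each of its edges contributes exactly two distinct vertices, and these vertex sets are disjoint across different edges, so $|V(M)|=2|M|$. Hence $|B|=|V(M)|+|U|=2|M|+|U|$.

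For the inequality $|M|>|U|$, I would substitute this vertex count directly into the hypothesis $(ii)$. From $|M|>\tfrac{|B|}{3}=\tfrac{2|M|+|U|}{3}$ I would clear the denominator to get $3|M|>2|M|+|U|$, which rearranges to $|M|>|U|$. This is the entire content of the first half of the conclusion, so condition $(iii)$ plays no role here; only $(ii)$ and the combinatorial identity $|V(M)|=2|M|$ are needed.

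For the lower bound $|U|\geq 2$, I would argue by contradiction using conditions $(i)$ and $(iii)$, which is where the genuine (if modest) work lies. First I would handle $U=\emptyset$: then every vertex is saturated by $M$, so $B=V(M)$ and, since $B$ is connected with $\delta(B)\geq 2$ by $(i)$, the graph cannot simply be a disjoint union of the matching edges — there must be additional edges, but I would instead directly exploit that condition $(iii)$ becomes vacuous while the first half forces $|M|>|U|=0$, and then check whether the structural constraints can be met; more cleanly, the case $|U|\le 1$ should be ruled out by showing it is incompatible with $\delta(B)\ge 2$ together with the matching being \emph{induced}. The case $|U|=1$ is the one to watch: a single unsaturated vertex $v$ must, by $(iii)$, have neighbors meeting at least two distinct matching edges, and I would trace through the degree and induced-matching constraints to derive a contradiction with connectivity or with $M$ being induced.

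The main obstacle I anticipate is the $|U|=1$ subcase of the second inequality: the first inequality and the $|V(M)|=2|M|$ identity are immediate, but ruling out exactly one unsaturated vertex requires carefully combining $\delta(B)\geq 2$, the connectedness of $B$, and the fact that $M$ is an \emph{induced} matching so that no two $M$-edges have adjacent endpoints. I would expect the contradiction to come from counting edges incident to $V(M)$ that are not in $M$: condition $(i)$ forces every matched vertex to have an extra neighbor, but with $U$ too small there are not enough available endpoints to absorb these edges without violating the induced property of $M$ or the bipartite structure.
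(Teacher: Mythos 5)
Your proposal is correct and follows essentially the same route as the paper: the first inequality is exactly the paper's computation $|M|>\frac{2|M|+|U|}{3}$, and your mechanism for $|U|\geq 2$ --- each endpoint of an $M$-edge needs a second neighbor by $\delta(B)\geq 2$, that neighbor must lie in $U$ because $M$ is induced, and bipartiteness prevents a single vertex of $U$ from serving both endpoints --- is precisely the argument the paper compresses into one sentence. The only stray element is your appeal to condition $(iii)$ in the $|U|=1$ case; it is not needed, since (as your own final paragraph indicates and as the paper's proof confirms) only bipartiteness, $\delta(B)\geq 2$, and the inducedness of $M$ are used.
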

\begin{proof}
We have that $|M|>\frac{|B|}{3}=\frac{2|M|+|U|}{3}$. It implies that $|M|>|U|$. As the graph $B$ is bipartite, $\delta(B) \geq 2$ and the matching $M$ is induced, we conclude that $|U|\geq 2$.
\end{proof}

\begin{proposition}\label{prop:ebp}
If a graph $B\in \Gamma$ has a $\csbe$-sequence $\E$, then $|\E|>\bp(G)$.
\end{proposition}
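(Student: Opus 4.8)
The plan is to force $\bp(B)$ strictly below $|\E|$ through the chain
\[
\bp(B)\ \le\ |U|\ <\ |M|\ \le\ |\E|,\qquad U:=V(B)\setminus V(M).
\]
The middle inequality is exactly Fact~\ref{prop:contra}, and $|\E|$ is well defined by Proposition~\ref{prop:sbe}. So the work splits into two independent estimates: an upper bound $\bp(B)\le|U|$ coming from the $\Gamma$-structure, and a lower bound $|\E|\ge|M|$ on the length of a complete simple biclique elimination sequence.

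For the upper bound I would exhibit a biclique vertex partition of size $|U|$ whose blocks are stars centred at the vertices of $U$. The key observation is that \emph{every} endpoint of $M$ has a neighbour in $U$: if $x_i\in X$ is matched to $y_i$, then $M$ being induced forces $N_B(x_i)\subseteq\{y_i\}\cup U$, whence $\delta(B)\ge 2$ (condition~$(i)$) produces a neighbour of $x_i$ in $U$. Declaring each $u\in U$ the centre of a star and attaching every endpoint of $M$ as a leaf to one chosen adjacent centre gives $|U|$ pairwise disjoint stars (each a biclique) covering $V(B)$; hence $\bp(B)\le|U|$.

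For the lower bound I would prove that each block of the biclique vertex partition induced by a $\csbe$-sequence $\{e_1,\dots,e_{|\E|}\}$ (so $C_i=N_{B_{i-1}}[e_i]$) captures at most one edge of $M$, and then count. Assign to each edge of $M$ its \emph{first-capture} step, the least $i$ at which $C_i$ deletes one of its endpoints. If two edges $x_ay_a,x_by_b$ of $M$ shared a first-capture step $i$, then all of $x_a,y_a,x_b,y_b$ would still lie in $B_{i-1}$. Because $C_i$ is a biclique and $M$ is induced, $C_i$ cannot contain one endpoint of each edge on \emph{opposite} sides (they would be adjacent), so $C_i$ must delete two endpoints on the same side, say $x_a,x_b$, while $y_a,y_b$ survive. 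In the case where the simple vertex $s$ of $e_i$ lies in the part opposite to $x_a,x_b$, these two vertices are neighbours of $s$, so their neighbourhoods in $B_{i-1}$ are linearly ordered by inclusion; since $y_a,y_b$ are present this forces $x_a\sim y_b$ or $x_b\sim y_a$, contradicting that $M$ is induced. Thus the first-capture map is injective and $|M|\le|\E|$, closing the chain.

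The main obstacle is the remaining orientation, where $s$ sits on the \emph{same} side as $x_a,x_b$; there simplicity no longer nests the relevant neighbourhoods. I expect this to be exactly the point at which hypothesis~$(iii)$ and $\delta(B)\ge2$ are indispensable. In that configuration every vertex on the opposite side of $C_i$ must avoid all the $Y$-endpoints of $M$ (otherwise $x_a$ would be adjacent to a second matched vertex), which forces all neighbours of the simple vertex $s$ into $U$; condition~$(iii)$ (if $s\in U$) or the matched-partner relation (if $s\in V(M)$) then produces matching endpoints that cannot be placed, a contradiction. The genuinely delicate issue is that $(iii)$ constrains $B$ rather than the intermediate graphs $B_{i-1}$: a neighbour of $s$ guaranteed by $(iii)$ might have been deleted at an earlier step. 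The first-capture reformulation is chosen precisely so that both partners of each captured edge remain present at the decisive moment, and I anticipate that completeness of the sequence (no vertex of $\Iso_{B_{i-1}}(e_i)$ is ever created, forcing surviving split partners to retain $U$-neighbours) is what ultimately controls these earlier deletions and finishes this last case.
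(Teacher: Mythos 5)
Your chain $\bp(B)\le |U|<|M|\le|\E|$ is structurally sound, and its first two links are correct: the stars-centred-at-$U$ construction is exactly the standard proof of $\bp(B)\le\gamma(B)\le|U|$ (which is also how the paper finishes), and $|U|<|M|$ is Fact~\ref{prop:contra}. Your first-capture argument in the orientation where the simple vertex $s$ lies \emph{opposite} the two captured endpoints $x_a,x_b$ is also correct and is a genuinely nice use of simplicity. But the remaining orientation is a real gap, and you have located it precisely yourself: after reducing to the situation where $N_{B_{i-1}}(s)$ contains no $Y$-endpoints of $M$, the contradiction you want (via condition $(iii)$ if $s\in U$, or via the matched partner if $s=x_c$ with $c\neq a,b$) evaporates exactly when those witness vertices were deleted at an earlier step. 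The mechanism you hope will rescue this, completeness of the sequence, does not: completeness only says no $B_i$ has isolated vertices, and nothing in your sketch converts ``no isolated vertices'' into ``the $(iii)$-witnesses of a surviving vertex are still alive in $B_{i-1}$.'' As written, $|M|\le|\E|$ is not established.

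What actually closes this case is not completeness but an induction on the elimination steps, and carrying it out essentially reproduces the paper's proof, which is structured differently: the paper proves the stronger fact $\E=M$. Its key claim is that condition $(iii)$ together with inducedness of $M$ forbids any edge meeting $U$ from being bisimplicial, so every $e_j$ lies in $M$; then $U$ dominates $B$ and $\bp(B)\le\gamma(B)\le|U|<|M|=|\E|$. The observation you are missing, which makes the step-by-step iteration legitimate, is this consequence of $M$ being an induced matching: if $e_1,\dots,e_{j-1}\in M$, then no vertex surviving to $B_{j-1}$ can be adjacent in $B$ to an endpoint of any eliminated $e_l$ (such a vertex would lie in $N_{B_{l-1}}[e_l]$ and would have been deleted at step $l$), and no endpoint of a not-yet-eliminated edge of $M$ can ever be deleted as a bystander. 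Hence all matched partners and all condition-$(iii)$ witnesses of surviving vertices are still present, with their edges, in $B_{j-1}$, so the ``no bisimplicial edge outside $M$'' argument applies verbatim to $B_{j-1}$ and forces $e_j\in M$; in particular your troublesome case (where $e_i=sv$ has $v\in U$) simply cannot occur. (To be fair, the paper's write-up also checks bisimpliciality only in $B$ itself and leaves this iteration implicit; the induction above is what justifies it for both proofs.) So your proposal is repairable, but the repair is the paper's induction rather than the first-capture/completeness mechanism you anticipate.
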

\begin{proof}
Suppose that $(B,M)\in \Gamma$ admits a $\csbe$-sequence $\E$. We first verify that $\E\subseteq M$.

If we set $U:=V(B)\setminus V(M)$, then $E(B)$ can be partitioned into three sets $E_1=M$, $E_2=\{uw \in E(B)\colon u\in U\;\textnormal{and}\;w \in V(M)\}$ and $E_3=\{uu'\in E(B)\colon u,u'\in U\}$. 

Consider an edge $e=uw\in E_2$, where $u \in U$ and $w \in V(M)$. If the vertex $u$ is adjacent to end vertices, say $v$ and $w$, of at least two edges $ww', vv'\in M$, then the edge $e$ can not be bisimplicial (see Figure~\ref{fig:ebp}-$(a)$).
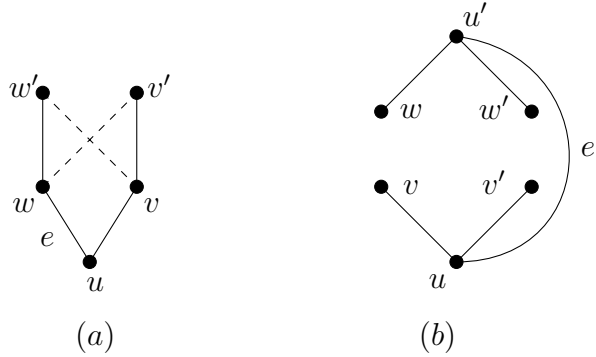
\begin{figure}[ht]
\begin{center}
\begin{tikzpicture}
              \node[circle, draw, fill, inner sep = 1.75pt] at (0, 0) {};
              \node[circle, draw, fill, inner sep = 1.75pt] at (1.25, 0) {};
              \node[circle, draw, fill, inner sep = 1.75pt] at (0, 1.25) {};
              \node[circle, draw, fill, inner sep = 1.75pt] at (1.25, 1.25) {};

              \node[circle, draw, fill, inner sep = 1.75pt] at (0.625, -1) {};

              \draw (0, 0) -- (0, 1.25)
              (1.25, 0) -- (1.25, 1.25)
              (0.625, -1) -- (0, 0)
              (0.625, -1) -- (1.25, 0);

              \draw[dashed] (0, 0) -- (1.25, 1.25)
              (0, 1.25) -- (1.25, 0);

              \node at (0.06, -0.7) {$e$};
              \node at (0.7, -1.3) {$u$};
              \node at (-0.25, -0.25) {$w$};
              \node at (-0.25, -0.25 + 1.6) {$w'$};
              \node at (-0.25 + 1.7, -0.25) {$v$};
              \node at (-0.25 + 1.8, -0.25 + 1.6) {$v'$};
 \node at (0.7, -2) {$(a)$};

              \def \d  {4.5};
    \node[circle, draw, fill, inner sep = 1.75pt] at (0 + \d, 0) {};
    \node[circle, draw, fill, inner sep = 1.75pt] at (2 + \d, 0) {};
    \node[circle, draw, fill, inner sep = 1.75pt] at (0 + \d, 1) {};
    \node[circle, draw, fill, inner sep = 1.75pt] at (2 + \d, 1) {};

    \node[circle, draw, fill, inner sep = 1.75pt] at (1 + \d, 2) {};
    \node[circle, draw, fill, inner sep = 1.75pt] at (1 + \d, -1) {};
    
    \draw 
    (1 + \d, 2) -- (0 + \d, 1)
    (1 + \d, 2) -- (2 + \d, 1)
    (1 + \d, -1) -- (0 + \d, 0)
    (1 + \d, -1) -- (2 + \d, 0);
    
    \draw (1 + \d, 2) .. controls (3 + \d, 1.75) and (3 + \d, -1) .. (1 + \d, -1);
    \node at (1 + \d + 0.25, 2 + 0.3) {$u'$};
    \node at  (1 + \d - 0.25, -1 - 0.25) {$u$};
    \node at (0 + \d + 0.4, 1) {$w$};
    \node at (0 + \d + 0.4, 0) {$v$};
    \node at (0 + \d + 0.4 + 1.1, 1.06) {$w'$};
    \node at (0 + \d + 0.4 + 1.1, 0.06) {$v'$};
    \node at (3 + \d -0.25, 1 - 0.5) {$e$};
    \node at  (1 + \d - 0.25, -1 - 1) {$(b)$};
\end{tikzpicture}
\end{center}
\caption{Illustrations for the proof of Proposition~\ref{prop:ebp}.}
\label{fig:ebp}  
\end{figure} 
         
Next we consider an edge $e=uu'\in E_3$, where $u,u'\in U$. If each end vertex $u$ and $u'$ is adjacent to end vertices of at least two edges from the matching $M$, say $u$ is adjacent to vertices $v,v'\in V(M)$ and the vertex $u'$ is adjacent to vertices $w,w'\in V(M)$, then once again the edge $e$ can not be bisimplicial (see Figure~\ref{fig:ebp}-$(b)$). 

As a result we conclude that an edge in $E_2\cup E_3$ can be bisimplicial only in the case when there is a vertex in $U$ adjacent to one of the end vertices of at most one edge from the matching $M$.
Since $(B,M)\in \Gamma$, each vertex of $U$ must be adjacent to one of the end vertices of at least two edges in $M$. Therefore, any bisimplicial edge of $B$ is contained in $E_1=M$. In other words, we have that $\E\subseteq M$ as we claimed. However, since $\E$ is a $\csbe$-sequence of $B$, this implies that $\E=M$.

Finally, the equality $\E=M$ forces that the set $U$ is a dominating set in $B$. Thus, we conclude that
\begin{displaymath}
\bp(B)\leq \gamma(B)\leq |U|<|M|=|\E|
\end{displaymath}
by Fact~\ref{prop:contra}.
\end{proof}

\begin{lemma}\label{lem:gamma}
Let $B$ be a bipartite $\csbe$-graph with a $\csbe$-sequence $\E$. If $B$ is $\Gamma$-free, then $|\E|=\bp(B)$.
\end{lemma}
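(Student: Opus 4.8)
The plan is to prove the contrapositive: I will show that if $B$ is a bipartite $\csbe$-graph with $\bp(B)<|\E|$, then $B$ contains an induced subgraph belonging to $\Gamma$. Since $B$ is assumed $\Gamma$-free, and since the inequality $\bp(B)\leq |\E|$ always holds (the $\csbe$-sequence already exhibits a biclique vertex partition of $B$ of size $|\E|$, namely the parts $N_{B_{i-1}}[e_i]$), this will force the desired equality $\bp(B)=|\E|$.

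First I would record an \emph{ownership} observation. Fix an optimal biclique vertex partition $\mathcal{D}=\{D_1,\ldots,D_m\}$ with $m=\bp(B)<k:=|\E|$, and recall that $\E=\{e_1,\ldots,e_k\}$ is an induced matching of $B$. Because each $D_j$ is a complete bipartite subgraph and $\E$ is induced, no single $D_j$ can contain both endpoints of two distinct matching edges $e_i,e_{i'}$: the two endpoints of each such edge must lie on opposite sides of $D_j$, and completeness of $D_j$ would then force a cross edge between $e_i$ and $e_{i'}$, contradicting that $\E$ is induced. Hence each $D_j$ owns at most one matching edge, so at least $k-m\geq 1$ edges of $\E$ are \emph{split}, meaning their two endpoints lie in distinct bicliques of $\mathcal{D}$.

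Next I would extract the forbidden configuration from the split edges. Let $M$ be the set of split edges; as a subset of the induced matching $\E$ it is automatically an induced matching. The mechanism producing the gap (as in the graphs $B_p$) is that a biclique of $\mathcal{D}$ acts as a \emph{hub}: its small side consists of a few vertices jointly adjacent to the endpoints of many split edges. Accordingly, I would take $U$ to be an inclusion-minimal set of such hub vertices, drawn from the sides of the bicliques $D_j$ that separate the endpoints of the edges in $M$ and chosen to dominate $V(M)$, and then set $B'$ to be the induced subgraph on $V(M)\cup U$. After pruning to a connected piece and discarding any matching edge or hub vertex that violates a degree or domination requirement, the goal is to arrange simultaneously that $B'$ is connected with $\delta(B')\geq 2$, that $|M|>|B'|/3$, and that every vertex of $V(B')\setminus V(M)$ is adjacent to endpoints of at least two edges of $M$, i.e. that $(B',M)\in\Gamma$.

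The hardest part will be precisely this extraction, for two reasons. First, $\bp$ is \emph{not} monotone under vertex deletion: removing the center of a star biclique turns it into an independent set and can strictly increase $\bp$, so a naive ``minimal counterexample under induced subgraphs'' argument fails, and one is forced to track the hub vertices $U$ explicitly rather than merely deleting vertices. Second, condition $(ii)$ is delicate: writing $V(B')=V(M)\cup U$ one has $|B'|=2|M|+|U|$, so $|M|>|B'|/3$ is equivalent to $|U|<|M|$, and this must be forced from the strict inequality $m<k$ together with the domination condition $(iii)$ --- essentially running the estimate of Fact~\ref{prop:contra} and Proposition~\ref{prop:ebp} in reverse. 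I would handle this by choosing $U$ extremally (inclusion-minimal among hub sets dominating $V(M)$) so that the deficit $k-m$ translates into $|U|<|M|$, and then verifying that $M$ remains an induced matching of the pruned $B'$ and that $B'$ is genuinely induced in $B$. As a consistency check this reproduces $B$ itself on the family $\{B_p\}$ and on the examples of Figure~\ref{fig:Gamma}, where $U$ is exactly the set of high-degree hub vertices. An alternative route is induction on $|B|$ via the first elimination $e_1$: by the induction hypothesis $\bp(B-N_B[e_1])=k-1$, and one compares $\mathcal{D}$ with its restriction to $B-N_B[e_1]$, where a biclique survives as a biclique unless one of its sides lies entirely in $N_B[e_1]$ while at least two vertices of the other side spill outside --- exactly the hub situation that $\Gamma$-freeness must exclude, so this reduces to the same obstacle.
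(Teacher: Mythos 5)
Your overall strategy is the paper's: prove the contrapositive by extracting an induced member of $\Gamma$ from a comparison of an optimal biclique vertex partition with the induced matching $\E$, and your preliminary observations are correct (that $\bp(B)\leq|\E|$ via the partition $\{N_{B_{i-1}}[e_i]\}$, and that no biclique of the partition can contain both endpoints of two distinct edges of $\E$). However, the proposal stops exactly where the real work begins: you explicitly defer ``the hardest part,'' and the plan you sketch for it would not go through. The paper's mechanism is different from yours in a way that is essential for the counting. Starting from the set $\SE'$ of bicliques of the optimal partition that meet $V(\E)$ (so $|\SE'|\leq\bp(B)<|\E|$), it \emph{iteratively} deletes any biclique containing endpoints of exactly one surviving edge of $\E$, deleting that edge at the same time; since each step removes one biclique \emph{and} one edge, the strict inequality $|\SE''|<|\E''|$ is preserved, and the process terminates with every surviving biclique containing endpoints of at least two surviving edges. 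Because $\E''$ is an induced matching, all such endpoints inside a fixed biclique are pairwise non-adjacent, hence lie on one side of it, so a single vertex chosen from the opposite side is adjacent to all of them. Taking one such hub per surviving biclique gives $|U_{\SE}|=|\SE''|<|\E''|$, which is exactly condition $(ii)$ of $\Gamma$, gives condition $(iii)$ for free, and an averaging argument over connected components finishes the proof.

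Your plan replaces this by ``split edges plus an inclusion-minimal hub set dominating $V(M)$,'' and that is where the gap lies. First, inclusion-minimality of a dominating set gives no cardinality bound at all, so nothing forces $|U|<|M|$; the deficit $|\E|-\bp(B)$ counts bicliques against edges, and the only way to cash it in is to charge hubs to bicliques (one hub per surviving biclique), not to domination requirements. Second, condition $(iii)$ is not addressed: a biclique containing an endpoint of only one split edge supplies hubs that may be adjacent to endpoints of only that single edge of $M$. This situation genuinely occurs, because your set $M$ of split edges is in general strictly larger than the paper's pruned $\E''$: bicliques owning an edge meet only that edge and disappear in the first pruning round, but further \emph{split} edges are pruned in later rounds when their bicliques drop to meeting a single surviving edge. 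Your proposed remedy --- ``discarding any matching edge or hub vertex that violates a degree or domination requirement'' --- is the right instinct, but it is precisely the paper's iterative pruning, and its whole point is that discards must be made in biclique--edge pairs so that the strict inequality survives; done ad hoc, you lose control of $|U|$ versus $|M|$. So the proposal identifies the correct target configuration but has a genuine gap at the central counting step, and your fallback induction is, as you note yourself, the same obstacle in disguise.
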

\begin{proof}
It suffices to verify that if $|\E|>\bp(B)$, then $B$ contains at least one graph from the class $\Gamma$ as an induced subgraph.

Let $\SE$ be a biclique vertex partition of $B$ with $|\SE|=\bp(B)$. Recall that the set $\E$ forms an induced matching in $B$. Assume that
\begin{equation}\label{eq:assume}
|\SE|=\bp(B)<|\E|.
\end{equation}
Let $\SE'$ be the set of bicliques in $\SE$ containing at least one of the end vertices of edges in $\E$. The inequality (\ref{eq:assume}) implies that
\begin{equation}\label{eq:ineq:1}
|\SE'|<|\E|.
\end{equation}

We transform the sets $\SE'$ and $\E$ iteratively by the following rule. If the set $\SE'$ has the biclique containing end vertices of exactly one edge from $\E$, say the end vertices of the edge $e \in \E$, then we delete that biclique from $\SE'$ and the edge $e$ from $\E$. We denote resulting sets by $\SE''$ and $\E''$. It is not hard to see that the inequality (\ref{eq:ineq:1}) implies the inequality $|\SE''|<|\E''|$. Each end vertex of any edge of $\E''$ is contained in exactly one of bicliques of $\SE''$ and each biclique in $\SE''$ contains end vertices of at least two edges of $\E''$.

Next, we construct a vertex set $U$ based on the sets $\SE''$ and $\E''$. Let $U_{\E}$ be the set of end vertices of edges in $\E''$. Take one vertex from each biclique in $\SE''$, adjacent to at least two vertices of $U_{\E}$, and denote by $U_{\SE}$, the set of chosen vertices. We have $|U_{\E}|=2|\E''|$ and $|U_{\SE}|=|\SE''|$ so that
\begin{equation}\label{eq:base}
2|U_{\SE}|<|U_{\E}|
\end{equation}
holds.

We set $U:=U_{\E} \cup U_{\SE}$. Observe that the subgraph of $B$ induced by $U$ may not be connected. In such a case, we consider one of the connected components of its induced subgraph by the vertex set $U'_{\E}\cup U'_{\SE}$, where $U'_{\E}\subseteq U_{\E}$ and $U'_{\SE}\subseteq U_{\SE}$ such that $2|U'_{\SE}|<|U'_{\E}|$ holds. In order to simplify the notation, we may abbreviate $U'_{\E}$, $U'_{\SE}$ and $U'_{\E}\cup U'_{\SE}$ to $U_{\E}$, $U_{\SE}$ and $U$, respectively.

We claim that the subgraph $B[U]$ of $B$ induced by $U$ is a member of the family $\Gamma$. The graph $B[U]$ is connected with the minimum degree at least two. Furthermore, the set $U_{\E}$ induces a $1$-regular subgraph and the set of edges in that subgraph forms an induced matching $M_U$ of size $|M_U|=\frac{|U_{\E}|}{2}>\frac{|U|}{3}$, since $|U|=|U_{\E}|+|U_{\SE}|<\frac{3}{2}|U_{\E}|$ and $\frac{|U_{\E}|}{2}>\frac{|U|}{3}$ by the inequality (\ref{eq:base}). Finally, each vertex in $U_{\SE}$ is adjacent to one of the end vertices of at least two edges of the matching $M_U$ by the construction.
\end{proof}

We denote by $S_{2,2,2}$, the \emph{long claw} obtained from the claw by subdividing every edge once, i.e., $S_{2,2,2}:=S(K_{1,3})$ (see Figure~\ref{fig: S_222}).
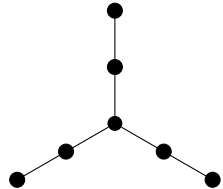
\begin{figure}[ht]
\begin{center}
\begin{tikzpicture}
        \node[draw, fill, circle, inner sep = 0.65mm] (s) at (0, 0) {};

        \node[ regular polygon,
      regular polygon sides = 3,
      minimum size = 1.5cm,
      inner sep=0mm,
      outer sep=0mm,
      rotate=0,
      thick,
      color = white,
      draw] (A) at (0, 0) {};

       \node[ regular polygon,
      regular polygon sides = 3,
      minimum size = 3cm,
      inner sep=0mm,
      outer sep=0mm,
      rotate=0,
      thick,
      color = white,
      draw] (B) at (0, 0) {};

      \foreach \i in {1,...,3}{
        \node[draw, circle, fill, inner sep = 2pt] at (A.corner \i) {};
      }

      \foreach \i in {1,...,3}{
        \node[draw, circle, fill, inner sep = 2pt] at (B.corner \i) {};
      }

      \draw (s) -- (B.corner 1)
      (s) -- (B.corner 2)
      (s) -- (B.corner 3);

\end{tikzpicture}
\end{center}
\caption{The long claw $S_{2,2,2}$.}
\label{fig: S_222}
\end{figure}      

\begin{corollary}\label{lem:s222}
If $\E$ is a $\csbe$-sequence of a $S_{2,2,2}$-free bipartite graph $B$, then $|\E|=\bp(B)$.
\end{corollary}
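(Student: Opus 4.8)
The plan is to deduce the corollary from Lemma~\ref{lem:gamma}. Since $B$ carries a $\csbe$-sequence $\E$, it is in particular a $\csbe$-graph, so Lemma~\ref{lem:gamma} already yields $|\E|=\bp(B)$ whenever $B$ is $\Gamma$-free. Hence it suffices to show that an $S_{2,2,2}$-free bipartite graph is automatically $\Gamma$-free, and for this I would prove the purely structural statement that \emph{every} member of $\Gamma$ contains the long claw $S_{2,2,2}$ as an induced subgraph. Because an induced subgraph of an induced subgraph is again induced, this implication transfers: if $B$ contained an induced copy of some $(B',M)\in\Gamma$, it would contain an induced $S_{2,2,2}$, contrary to hypothesis.

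So fix $(B,M)\in\Gamma$, write the bipartition as $X\cup Y$, let the matching edges be $a_ib_i$ with $a_i\in X$ and $b_i\in Y$ for $1\le i\le m$, and set $U:=V(B)\setminus V(M)$. The heart of the argument is a counting step showing that some vertex of $U$ is adjacent to an endpoint of at least three matching edges. Indeed, fix a matching $X$-endpoint $a_i$. Its neighbours lie in $Y$, and since $M$ is an induced matching it cannot be adjacent to any $b_j$ with $j\neq i$; as $\delta(B)\ge 2$ forces $a_i$ to have a neighbour other than $b_i$, that neighbour must lie in $U\cap Y$. Summing this over all $i$, and running the symmetric count for the endpoints $b_i$, each matching edge is charged once from each side, so the total number of incidences between $U$ and the edges of $M$ is at least $2m$. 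If every vertex of $U$ met at most two matching edges this total would be at most $2|U|$, whence $m\le |U|$, contradicting $|M|>|U|$ from Fact~\ref{prop:contra}. Therefore some $c\in U$, say $c\in X$, is adjacent to three distinct $Y$-endpoints $b_{k_1},b_{k_2},b_{k_3}$ of $M$.

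With such a vertex in hand, the induced long claw is immediate, and this is precisely where the induced-matching condition does the work that would otherwise be delicate. I would take $c$ as the centre and the three length-two paths $c\,b_{k_t}\,a_{k_t}$, $t\in\{1,2,3\}$, as the arms, so the middle vertices are $b_{k_1},b_{k_2},b_{k_3}\in Y$ and the leaves are $a_{k_1},a_{k_2},a_{k_3}\in X$. All seven vertices are distinct, since the $a$'s and $b$'s come from distinct matching edges and $c\in U$ is not a matching endpoint. Among these seven vertices, any two lying on the same side of the bipartition are non-adjacent, which disposes of every pair except those of the form $b_{k_s}a_{k_t}$; and for $s\neq t$ such a pair would be an edge joining two distinct members of $M$, hence forbidden because $M$ is induced. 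Thus the only edges present are the three $cb_{k_t}$ and the three matching edges $b_{k_t}a_{k_t}$, so the induced subgraph on these vertices is exactly $S_{2,2,2}$. This contradicts the $S_{2,2,2}$-freeness of $B$, proving that $B$ is $\Gamma$-free, and the corollary then follows from Lemma~\ref{lem:gamma}.

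I expect the main obstacle to be the counting step of the second paragraph, namely recognizing that $\delta(B)\ge 2$ together with the induced-matching property routes a spare neighbour of each matching endpoint into $U$, so that the deficiency inequality $|M|>|U|$ can be converted into a degree-three witness inside $U$. Once that witness is produced, the verification that the three matching arms span an induced $S_{2,2,2}$ is automatic, since the induced-matching hypothesis kills exactly the cross edges $b_{k_s}a_{k_t}$ that could otherwise spoil the configuration.
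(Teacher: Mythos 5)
Your proposal is correct and follows essentially the same route as the paper: reduce via Lemma~\ref{lem:gamma} to showing that every member of $\Gamma$ contains an induced $S_{2,2,2}$, then use $\delta(B)\ge 2$, the induced-matching property, and Fact~\ref{prop:contra} in a counting argument to produce a vertex of $U$ adjacent to endpoints of at least three matching edges, whose three arms form the long claw. The only difference is that you spell out the verification that the seven vertices induce exactly $S_{2,2,2}$, which the paper leaves implicit.
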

\begin{proof}
By Lemma~\ref{lem:gamma}, it is sufficient to show that each graph from the class $\Gamma$ contains the long claw $S_{2,2,2}$ as an induced subgraph.

So, let $(B,M)\in \Gamma$ be given. We claim that there exits a vertex of $B$ that is adjacent to one of the end vertices of at least three edges from the matching $M$. That vertex together with the end vertices of three edges induce a subgraph of $B$ isomorphic to the long claw $S_{2,2,2}$.

Let $U$ be a set of vertices of $B$ that are not incident to edges from the matching $M$. Suppose to the contrary that each vertex of $U$ is adjacent to one of the end vertices of at most two edges of the matching $M$. Since $\delta(B)\geq 2$, each end vertex of every edge from $M$ is adjacent to at least one of vertices in $U$. The total number of such end vertices equals to $2|M|$. On the other hand, each vertex of $U$ is adjacent to at most two of such vertices. Therefore, $|U|\geq |M|$, which contradicts to Fact~\ref{prop:contra}.
\end{proof}


\section{Further Comments}\label{sect:concl}
We recall that when $X$ is a simplicial complex, its \emph{Alexander dual} is the complex defined by
\begin{displaymath}
X^\vee:=\{A\subseteq V\colon V\setminus A\notin X\}.
\end{displaymath}
It was proved by Csorba~\citep[Theorem $6$]{csorba} that the complex $\Ind(S(G))$ is homotopy equivalent to the suspension of $\Ind(G)^\vee$ for every graph $G$. Therefore, Theorem~\ref{thm:main-2-a} combined with Corollary~\ref{cor:csbe-2} and Lemma~\ref{lem:algo} also determines the homology of the complex $\Ind(B)^\vee$ for every bipartite graph $B$ with a simple biclique elimination sequence in polynomial time.

There is an another algebraic invariant that can be associated to graphs. Recall that if $X$ is a simplicial complex on a set $V$, the \emph{projective dimension} $\projdim(X)$ of $X$ is defined to the least integer $i$ such that
\begin{equation*}
\widetilde{H}_{|S|-i-j-1}(X[S])=0
\end{equation*}
for all $j>0$ and $S\subseteq V$. However, we note that the parameters $\projdim(G):=\projdim(\Ind(G))$ and $|G|-\bp(G)$ are incomparable in general. For instance, we have that $|C_{10}|-\bp(C_{10})=6<7=\projdim(C_{10})=\reg(S(C_{10}))$. On the other hand,
$\reg(S(DS_2))=|DS_2|-\bp(DS_2)=4>3=\projdim(DS_2)$, where $DS_2$ denotes the double-star graph, obtained from two disjoint copies of $K_{1,2}$ by adding an edge between their degree two vertices.\medskip

We naturally suspect that the class of chordal bipartite graphs is not the unique bipartite class for which the equality of Theorem~\ref{thm:main-1} holds. Recall that a bipartite graph $B=(X,Y;E)$ is called a \emph{tree-convex bipartite graph} (over $X$) if a tree $T$ defined on the vertex set $X$ exists, such that for every vertex $y\in Y$, the set $N_B(y)$ induces a subtree in $T$. Any specific choice for the tree $T$ gives rise to a subclass. For example, if $T$ is a path or a star, the corresponding subclasses are known as \emph{convex bipartite} and \emph{star-convex bipartite} graphs respectively.  

It is known that chordal bipartite graphs also form a subclass of tree-convex bipartite graphs~\citep[Theorem $5$]{jiang}. Therefore, we may ask the following question.

\begin{question}
For which trees, do the corresponding tree-convex bipartite graphs satisfy the equality of Theorem~\ref{thm:main-1}?
\end{question} 

We note that the family of convex bipartite graphs forms a subclass of chordal bipartite graphs~\citep[pp. $94$]{BLS-gclass}. In more detail, recall that a bipartite graph $B=(X,Y;E)$ is said to be \emph{convex} on $Y$ if the vertices of $Y$ can be ordered in such a way that the neighbours of any vertex $x\in X$ are consecutive. A bipartite graph $B$ is convex bipartite if it is convex on $X$ or $Y$. Furthermore, if $B$ is convex both on $X$ and $Y$, then it is called a \emph{biconvex bipartite graph}~\citep[pp. $93-94$]{BLS-gclass}. Notice that the graph depicted in Figure~\ref{fig:indom-bp} is a planar convex bipartite graph, which is not biconvex. 

\begin{question}
Does there exist a biconvex bipartite graph $B$ with $\gamma^i(B)<\bp(B)$?
\end{question}

\bibliographystyle{abbrv}  
\bibliography{biclique-parti}


\end{document}